\numberwithin{equation}{section}
\newtheorem{Theorem}{Theorem}[section]
\newtheorem{Lemma}[Theorem]{Lemma}
\newtheorem{Proposition}[Theorem]{Proposition}
 { \theoremstyle{definition}
\newtheorem{Remark}[Theorem]{Remark} }
\newcommand{\beq}{\begin{equation}}
\newcommand{\eeq}{\end{equation}}
\newcommand{\bes}{\begin{subequations}}
\newcommand{\ees}{\end{subequations}}
\def\zz{{\mathbb Z}}
\def\cc{{\mathbb C}}
\def\rank{{\rm rk}}
\def\arank{{\rm Rk}}
\def\e{{\rm e}}
\def\d{{\rm d}}
\newcommand{\un}{\underline1}
\def\12{\frac{1}{2}}
\def\32{\frac{3}{2}}
\def\52{\frac{5}{2}}
\def\p{\partial}
\begin{document}
\allowdisplaybreaks

\newcommand{\arXivNumber}{2007.05698}

\renewcommand{\PaperNumber}{056}

\FirstPageHeading

\ShortArticleName{From Heun Class Equations to Painlev\'e Equations}

\ArticleName{From Heun Class Equations to Painlev\'e Equations}

\Author{Jan DEREZI\'NSKI~$^{\rm a}$, Artur ISHKHANYAN~$^{\rm bc}$ and Adam LATOSI\'NSKI~$^{\rm a}$}

\AuthorNameForHeading{J.~Derezi\'nski, A.~Ishkhanyan and A.~Latosi\'nski}

\Address{$^{\rm a)}$~Department of Mathematical Methods in Physics, Faculty of Physics,\\
\hphantom{$^{\rm a)}$}~University of Warsaw, Pasteura 5, 02-093, Warszawa, Poland}
\EmailD{\href{mailto:jan.derezinski@fuw.edu.pl}{jan.derezinski@fuw.edu.pl}, \href{mailto:adam.latosinski@fuw.edu.pl}{adam.latosinski@fuw.edu.pl}}

\Address{$^{\rm b)}$~Russian-Armenian University, 0051 Yerevan, Armenia}
\EmailD{\href{mailto:aishkhanyan@gmail.com}{aishkhanyan@gmail.com}}
\Address{$^{\rm c)}$~Institute for Physical Research of NAS of Armenia, 0203 Ashtarak, Armenia}

\ArticleDates{Received August 25, 2020, in final form May 25, 2021; Published online June 07, 2021}

\Abstract{In the first part of our paper we discuss linear 2nd order differential equations in the complex domain, especially Heun class equations, that is, the Heun equation and its confluent cases. The second part of our paper is devoted to Painlev\'e I--VI equations. Our~philosophy is to treat these families of equations in a unified way. This philosophy works especially well for Heun class equations. We~discuss its classification into 5 supertypes, subdivided into 10 types (not counting trivial cases). We~also introduce in a unified way deformed Heun class equations, which contain an additional nonlogarithmic singularity. We~show that there is a direct relationship between deformed Heun class equations and all Painlev\'e equations. In~particular, Painlev\'e equations can be also divided into 5 supertypes, and subdivided into 10 types. This relationship is not so easy to describe in a completely unified way, because the choice of the ``time variable'' may depend on the type. We~describe unified treatments for several possible ``time variables''.}

\Keywords{linear ordinary differential equation; Heun class equations; isomonodromy defor\-mations; Painlev\'e equations}

\Classification{34A30; 34B30; 34M55; 34M56}

{\small \tableofcontents}

\section{Introduction}

There are many types of differential equations and special functions. Typically, within a given class there is one generic type and many confluent types.
This is the case of Riemann (hypergeometric) class equations, Heun class equations, as well as Painlev\'e equations. For instance, the generic type of Riemann
class equations can always be reduced to the Gauss hypergeometric equation, but there are also confluent types such as Kummer's confluent equation, the $F_1$ equation, and the Hermite equation, see, e.g., \cite{De,De2,Mason,Ronveaux,Sl_Lay}.

One can try to understand the process of confluence by considering
equations depending holomorphically on parameters.
Some properties of the whole class can be described in a uniform way,
without splitting the class into types.
For instance, one can identify various transformations (``symmetries'') that leave the class invariant.

In~order to study the equations
in more detail, one needs to split the class into types.
Within a given type one can simplify the equation by symmetries and convert it to a {\em normal form},
thereby reducing the number of parameters. This has to be done case by case.

In~the case of hypergeometric class equations, this idea was successfully applied in the book by~Nikiforov--Uvarov \cite{NU}. It works especially well for hypergeometric polynomials, that is Jacobi, Laguerre, Bessel and Hermite polynomials, which can be elegantly treated in a unified way.

In~this paper, we try to apply this idea to the derivation of Painlev\'e equations from Heun class equations by the method of isomonodromic deformation.
We~will see that each type of~Heun class equation corresponds to a
(properly understood) type of Painlev\'e equation. The~passage from
Heun class to Painlev\'e can be accomplished in a fairly uniform way, although one has to~consider
several similar but distinct cases.

We~start our paper by Section~\ref{s1} containing basic theory of
singularities of 2nd order scalar ordinary differential equations.
This is a classic subject with several well-known textbooks, such as \cite{Ince,Poole}.
 We~follow to a large extent the
treatment
described in the monograph by Slavyanov--Lay \cite{Sl_Lay} and the
appendix to \cite{Ronveaux}, written by Slavyanov. Sometimes we
introduce new notation and terminology to
make precise some concepts which in \cite{Sl_Lay} are implicit.

The central concept of the theory of linear differential equations in the complex domain is the {\em rank of a singularity}. In~our paper we introduce several kinds of the rank.
In~particular, we distinguish between the usual rank and the {\em absolute rank}
(the infimum over the ranks of all possible transformed forms of a given equation).
The rank can be an integer or a half-integer. We~also introduce the {\em rounded rank}, which has always an integer value.
Thus if the rank is $m$ or $m-\frac12$, where $m$ is an integer, then
we say that its rounded rank is $m$. In~particular, the rounded rank
is $1$ if the singularity is {\em Fuchsian} (also called {\em regular}).
We~believe that all these concepts clarify the theory of ordinary differential equations. We~also discuss formal power series solutions of these equations (the so-called {\em Thom\'e solutions}). We~introduce the concept of {\em indices}
of a singular point. This is of course well-known for Fuchsian singularities. For non-Fuchsian singularities this concept is not so well known, although it is implicit in \cite{Sl_Lay}.

In~Section~\ref{s2} we discuss equations with rational coefficients.
Such equations have a finite number of singularities on the Riemann sphere.
Following~\cite{Sl_Lay}, the class of equations with $n$ singularities in
$\cc$ and a singularity at infinity, and their confluent cases are
called {\em $M_n$ class} equations. red It is easy to see that these equations can be
written as
 \begin{gather}
 \big(\sigma(z)\partial_z^2+\tau(z)\partial_z+\eta(z)\big)u(z)=0, \label{pde.}
 \end{gather}
where $\sigma$ is a polynomial of degree $\leq n$, $\tau$ of degree
$\leq n-1$ and $\sigma\eta$ of degree $\leq 2n-2$.
We~introduce also a closely related {\em grounded $M_n$ class}, for
which one of indices of all finite singularities is zero~--- hence the
name ``grounded''. (In~\cite{Sl_Lay} such equations are called {\em
 canonical}. In~our opinion, the word canonical is overused, hence
not appropriate for this meaning.) They can be written as~\eqref{pde.}
with the same conditions on $\sigma$, $\tau$, and with $\eta$ a
polynomial of degree $\leq n-2$.

The best known classes of equations with rational coefficients are the
$M_2$ class and the grounded $M_2$ class. We~call $M_2$ the {\em
 Riemann class}, since its generic representative is the {\em Riemann equation} with one singularity at $\infty$.
The grounded $M_2$ class is especially often
encountered in the literature. It is the main subject of the textbook
by Nikiforov--Uvarov~\cite{NU}, where its elements are called
``hypergeometric type equations''. Note that the difference between
the full and grounded Riemann class is minor~--- the only type of
equations contained in the full Riemann class but not represented in
the grounded Riemann class is the {\em Airy equation}.

 One of the central objects of our paper is the $M_3$ class, called
also the {\em Heun class}, for which~$\sigma$ is a polynomial of degree $\leq3$, $\tau$ of degree $\leq2$
and $\sigma\eta$ of degree $\leq4$.
The main type within the Heun class is the standard {\em Heun type},
that is the equation with 4 Fuchsian singularities in~the Riemann
sphere, one of which put at $\infty$. It was first analyzed by Heun \cite{Heun}, see also~\cite{Maier} for a~recent study.

In~the literature the name Heun class equations is employed in two meanings: the meaning that we have just described is used in \cite{Ronveaux,Sl_Lay}. It is also common to use it for what we call the {\em grounded Heun class}, see, e.g.,
\cite{FID}. The grounded Heun class (not counting types of the Riemann class) is divided into 5 types: {\em standard, confluent, biconfluent, doubly confluent} and {\em triconfluent}.
The full Heun class, beside the above five types has five more types, which we call {\em degenerate confluent, degenerate biconfluent, degenerate doubly confluent, doubly degenerate doubly confluent} and {\em degenerate triconfluent}. (\cite{Sl_Lay} uses the word ``reduced'' instead of ``degenerate''.)

Sometimes a singularity of an equation does not lead to a singularity
of its solutions. We~call such points {\em non-logarithmic
 singularities}.
 Non-logarithmic singularities play an important role in the
 following construction.

 Let us start with an
equation \eqref{pde.}. We~introduce the {\em deformed form of \eqref{pde.}}
to be the equation
\begin{gather}
\bigg(\sigma(z)\partial_z^2+
\bigg(\tau(z)-\frac{\sigma(z)}{z-\lambda}\bigg)\partial_z
+\eta(z)-\eta(\lambda)-\mu^2\sigma(\lambda)-\mu\big(\tau(\lambda)-\sigma'(\lambda)\big)
+\frac{\mu\sigma(\lambda)}{z-\lambda}\bigg)\nonumber
\\ \qquad
{}\times v(z)=0.
\label{h-bis-1}
\end{gather}
Note that all finite singularities of \eqref{h-bis-1} are the same as
in \eqref{pde.}, except that there is an additional non-logarithmic
singularity with indices $0$, $2$. $\lambda$ is the position of this
singularity and $\mu= \frac{v'}{v}(\lambda)$.
If~\eqref{pde.} is a Heun class equation, then \eqref{h-bis-1} is called
a {\em deformed Heun class equation} \cite{Sl_Lay}.

Let us note that non-logarithmic singularities of an
equation can be produced by a {\em change of gauge}, that is by replacing
the unknown $v(z)$ with $c_0(z)v(z)+c_1(z)v'(z)$, where $c_0$, $c_1$ are
some rational functions.
This procedure applied to grounded Heun class equations produces the so-called {\em derivative Heun class
 equations}, see~\cite{FID,Ishkhanyan, Sl-etal}. Kimura proved in~\cite{Ki} that in deformed Riemann class equations one can
remove all non-logarithmic singularities by a change of gauge. We~do not know if the same is
possible for all kinds of deformed Heun class equations.

{\em Painlev\'e equations} is a famous class of nonlinear differential equations with the so called {\em Painlev\'e property}--the absence of moving essential and branch singularities in its solutions. They were discovered in the beginning of 20th century \cite{Gam,Pa1,Pa2}.
Traditionally, Painlev\'e equations are divided into 6 types, called
Painlev\'e I, II, III, IV, V and VI \cite{Iwasaki}.
As noted by Ohyama--Oku\-mura~\cite{Ohyama}, it is actually natural to subdivide some of them into smaller types, obtaining altogether 10 types. Each of them corresponds
to one of types of the Heun class equations. We~obtain the following correspondence between types of the Heun class and Painlev\'e:
\begin{center}\renewcommand{\arraystretch}{1.3}
{\small\begin{tabular}{cll}\hline
 $(\un\un\un\un)$& (standard) Heun&Painlev\'e VI
 \\\hline
 $(\un\un2)$& confluent Heun& Painlev\'e nondegenerate V
 \\\hline
 $\big(\un\un\frac32\big)$ & degenerate confluent Heun& Painlev\'e degenerate V
 \\\hline
 $(22)$& doubly confluent Heun& Painlev\'e nondegenerate III$'$
 \\\hline
 $\big(\frac322\big)$ & degenerate doubly confluent Heun& Painlev\'e degenerate III$'$
 \\\hline
$\big(\frac32\frac32\big)$& doubly degenerate doubly confluent Heun&Painlev\'e doubly degenerate III$'$
\\\hline
 $(\un3)$& bi-confluent Heun&Painlev\'e IV
 \\\hline
 $\big(\un\frac52\big)$ & degenerate bi-confluent Heun& Painlev\'e 34
 \\\hline
 $(4)$ & tri-confluent Heun&Painlev\'e II
 \\\hline
 $\big(\frac72\big)$ & degenerate tri-confluent Heun&Painlev\'e I
 \\\hline
 \end{tabular}}
 \end{center}

\medskip
Above, the symbols such as $(\un\un2)$ indicate the ranks of the singularities in the Heun class equation.
We~use underline to indicate the rounded rank: thus
\begin{alignat*}{3}
& \underline1 \text{ denotes either $1$ or $\frac12$ (a Fuchsian singularity);}\qquad &&
 \underline2 \text{ denotes either $2$ or $\frac32$;}& \\
& \underline3 \text{ denotes either $3$ or $\frac52$;}\qquad &&
\underline4 \text{ denotes either $4$ or $\frac72$.}&
 \end{alignat*}

Following the literature, instead of Painlev\'e III we prefer to use an equivalent equation Painlev\'e III$'$.

Note that the Painlev\'e deg-V is equivalent to Painlev\'e III$'$
and Painlev\'e 34 is equivalent to Painlev\'e II by a relatively complicated change of variables.

As noted by Ohyama--Okumura \cite{Ohyama}, there also exists another, coarser
classification of Pain\-lev\'e equations into five supertypes. It corresponds to a coarser classification of Heun class equations into supertypes,
where we use rounded ranks instead of the usual ranks. We~obtain the following table, which is discussed in detail in Section~\ref{FivesupertypesofPainleveequation}:
\begin{center}\renewcommand{\arraystretch}{1.3}
{\small\begin{tabular}{cll}\hline
 $(\un\un\un\un)$& (standard) Heun&Painlev\'e VI
 \\\hline
 $(\un\un\underline2)$& confluent Heun& Painlev\'e V
 \\\hline
 $(\underline2\underline2)$& doubly confluent Heun& Painlev\'e III$'$
 \\\hline
 $(\un\underline3)$& bi-confluent Heun&Painlev\'e IV-34
 \\\hline
 $(\underline4)$ & tri-confluent Heun&Painlev\'e II--I
 \\\hline
 \end{tabular}}
 \end{center}

The main topic of our paper, described in Section~\ref{section: Painleve},
is a derivation of
Painlev\'e equations from
Heun class equations.
The first step of this derivation is a choice of a family of Heun
class equations
depending on a
parameter denoted $t$ and called the {\em time}.
Then we consider then the corresponding deformed Heun class equation,~\eqref{h-bis-1}, which depends on two additional vari\-ab\-les:~$\lambda$,~$\mu$.
The conditions for a constant monodromy lead to a set of nonlinear
differential equation for $\lambda$, $\mu$ in terms of $t$. These
equations can be interpreted as Hamilton equations generated by
$H(t,\lambda,\mu)$, which we
will call {\em Painlev\'{e} Hamiltonians}.

The most difficult ingredient of the passage from Heun class to
Painlev\'e is the choice of the time variable and of the so-called
{\em compatibility functions} $a,b$, which control the isomonodromic
deformations. The main idea of our paper is to present this passage in
a unified way. Our first attempt in this direction is described in Theorem~\ref{main0}. The description of compatibility
functions and the corresponding Hamiltonian contained in this theorem
is unified, however it is not very transparent.
Theorem~\ref{main1}, which can be viewed as the first part of the main
result of this paper, is more satisfactory. The method of
isomodronomic deformation is here divided into two slightly different
ansatzes, called Cases~A and~B. The main condition on~$\sigma$, $\tau$, $\eta$, $t$ for the applicability of Ansatz A is the existence of a zero of
 $\sigma$, so that we can write $\sigma(z)=(z-s)\rho(z)$, where $\rho$ is a
polynomial of degree $\leq2$. Under these conditions
there exists a function $t\mapsto m(t)$ such that
\begin{gather}
H(t,\lambda,\mu)=m\big(\sigma(\lambda)\mu^2+(\tau(\lambda) -(\lambda-s)\rho'(\lambda))\mu+\eta(\lambda)\big)
\label{paino1}
\end{gather}
is a Painlev\'{e} Hamiltonian.

Among the conditions on
$\sigma$, $\tau$, $\eta$, $t$ needed for
Ansatz B the most important is $\deg\sigma\leq2$.
Then we can show that
there exists $t\mapsto m(t)$ such that
\begin{gather}
H(t,\lambda,\mu)=m\big(\sigma(\lambda)\mu^2+(\tau(\lambda) -\sigma'(\lambda))\mu+\eta(\lambda)\big) \label{paino2}
\end{gather}
is a Painlev\'{e} Hamiltonian.

It seems impossible to implement Theorem~\ref{main1} in a
fully unified way, and one has to subdivide Cases A and B into
several subcases. Case A splits into Subcases A1, Ap,
Aq and Case B splits into Subcases~Bp and Bq.
We~describe these subcases in Theorem~\ref{main}. The main difference
between the subcases is the choice of the time variable $t$:

In~Subcase~A1 the variable $t$ is the position of one of Fuchsian
singularities.

In~Subcases Ap and Bp the variable $t$ is contained in $\tau$.

In~Subcases Aq and Bq the variable $t$ is contained in $\eta$.

In~the following list we informally describe when we can apply various subcases.
\begin{itemize}\itemsep=0pt\setlength{\leftskip}{0.75cm}
{\samepage\item[A1.] $\sigma(t)=0$, $\sigma'(t)\neq0$, $\deg(z-t)\eta\leq2$.

\item[Ap.] $\sigma(s)=\sigma'(s)=0$, $\tau(s)\neq0$, $\sigma\eta(s)=(\sigma\eta)'(s)=0$.
}

\item[Aq.] $\sigma(s)=\sigma'(s)=0$, $\tau(s)=0$, $\sigma\eta(s)=0$, $(\sigma\eta)'(s)\neq0$.

\item[Bp.] $\deg\sigma\leq2$, $\deg\tau=2$, $\deg\sigma\eta\leq2$.

\item[Bq.] $\deg\sigma\leq2$, $\deg\tau\leq 1$, $\deg\sigma\eta=3$.
 \end{itemize}

Note that Subcase~A1 works in the
standard Heun
type $(\un\un\un\un)$, leading to Painlev\'{e} VI. Hence it works in
under generic conditions.
However, it does not works for some confluent types. For instance, for most degenerate
types one needs to use either Subcase~Aq or Subcase~Bq.
Altogether, Theorem~\ref{main} works for certain normal forms of all
types of Heun class equations. This
 allows us to derive all types of Painlev\'e equations.

The derivation of the
Painlev\'e VI equation from the Heun equation can be traced back to a~paper by Fuchs \cite{Fu} from the early 20th century (written by the son of Fuchs from whose name the adjective ``Fuchsian'' comes). The approach was generalized to other Painlev\'e equations by~Okamoto \cite{Okamoto} and refined by Ohyama--Okumura \cite{Ohyama}. A discussion of the relationship between the biconfluent Heun type and the Painlev\'e IV equation can be also found in \cite{CLY}.
Thus derivations described in Sections~\ref{painleve6}--\ref{painleve1} are known. They are in particular
described by Ohyama--Okumura \cite{Ohyama}, where they were checked case by case.
Our approach allows us to automatize these derivations and view them as implementations of a unified algorithm.
In~fact, our paper can be to some extent viewed as an explanation of the principles that underly
the results of \cite{Ohyama}.

Slavyanov and Lay devote in \cite{Sl_Lay} a whole chapter to the Heun class~--- Painlev\'{e}
correspondence. In~particular, they stress that Painlev\'{e}
Hamiltonians can be viewed as ``dequantizations'' of the corresponding
Heun class equations. In~fact, the symbol of Heun class operators,
that is the expression obtained from \eqref{pde.} by
replacing $\partial_z$ with $\mu$ and $z$ with $\lambda$, is very
similar to~\eqref{paino1} and~\eqref{paino2}. (The difference is a
``lower order term'', which can be interpreted as a result of~an~``ordering prescription''.) Nevertheless, to our understanding,
\cite{Sl_Lay} contains only a sketch of~a~program. The details of this
program are quite involved and \cite{Sl_Lay} does not
provide its full description.

Clearly, every 2nd order scalar equation can be rewritten as a system
of two 1st order equations. For instance, we can rewrite \eqref{pde.} as
\begin{gather*}
 \sigma(z)\partial_z\begin{bmatrix}u(z)\\v(z)\end{bmatrix}=\begin{bmatrix}0&1
 \\
 -\sigma(z)\eta(z)&-\tau(z)+\sigma'(z)\end{bmatrix}
 \begin{bmatrix}u(z)\\v(z)\end{bmatrix}
 \end{gather*}
(or in other ways). In~particular, Heun class equations are essentially equivalent to
certain systems of 1st
order equations called sometimes {\em Heun connections}. This suggests
a different approach to deriving Painlev\'{e} equations
starting from Heun connections.
This alternative approach was studied, e.g., by Jimbo and Miwa
\cite{Ji,JiM}. A recent exposition of this approach can be found in
\cite{IP}.

Note that in the above references each type of Painlev\'{e} equations
were treated separately. Besides, degenerate cases
were usually left out.
It would be interesting to investigate to what extent a derivation of
all types
Painlev\'{e} equations from Heun connections can be treated in a~uniform way, in the spirit of Theorems~\ref{main1} and~\ref{main}.
Anyway, in this paper we do not consider this question and we stick to
(scalar 2nd order)
deformed Heun class equations.

\section[Second order linear differential equations in the complex domain]
{Second order linear differential equations \\in the complex domain}\label{s1}

\subsection{Differential equation and operator}
Let us recall basic concepts of ordinary 2nd order linear
differential equations in the complex domain with holomorphic
coefficients. They have the form \eqref{pde.},
 where $\sigma$, $\tau$ and $\eta$ are holomorphic functions.
 We~will often describe the equation~(\ref{pde.}) by specifying the corresponding operator
\begin{gather}
\sigma(z)\partial_z^2+\tau(z)\partial_z+\eta(z).\label{pde-op}
\end{gather}

Clearly, by
multiplying an equation
from the left by an arbitrary nonzero holomorphic function we obtain an equivalent equation. However, we change the corresponding operator.
Speaking of operators instead of equations, which we will often do, has two advantages. First it saves a little space, since we do not need to write the function $u$.
Besides, an operator contains more information than the corresponding equation, therefore sometimes allows for making more precise statements.

Divide~(\ref{pde.}) and~(\ref{pde-op}) by $\sigma(z)$ and set
\begin{gather*}
p(z):=\frac{\tau(z)}{\sigma(z)}, \qquad
q(z):=\frac{\eta(z)}{\sigma(z)}.
\end{gather*}
This leads to the so-called {\em principal form of the equation}
and the corresponding {\em principal operator}:
\begin{gather}
\big(\partial_z^2+p(z)\partial_z+q(z)\big)u(z)=0,
\label{pde}
\\
A:= \partial_z^2+p(z)\partial_z+q(z).
\label{pde-op.}
\end{gather}
The principal form is often used as the standard form.
However, we will often prefer different forms.

 \subsection{Singularities of functions}

 Let $p$ be a function holomorphic on an open subset of the Riemann
 sphere $\cc\cup\{\infty\}$.
 Let $z_0\in\cc\cup\{\infty\}$ be its singularity, so that
\begin{gather*}
 p(z)=\sum_{k=-\infty}^\infty p_{z_0,k}(z-z_0)^k,\qquad
 |z-z_0|<r\qquad \text{for some}\quad r>0,\quad \text{if}\quad z_0\in\cc,
 \\
 p(z)=\sum_{k=-\infty}^\infty p_{\infty,k} z^k,\qquad
 |z|>R \qquad \text{for some}\quad R\geq0,\quad \text{if}\quad z_0=\infty.
\end{gather*}
We~define the {\em degree of the singularity} of $p$ at $z_0$ by
\begin{gather*}
 \deg(p,z_0):=-\min\{k\mid p_{z_0,k}\neq0\},\qquad z_0\in\cc,
 \\
 \deg(p,\infty):=\max\{k\mid p_{\infty,k}\neq0\},\qquad z_0=\infty.
\end{gather*}
(If $p$ is a polynomial, then $\deg(p,\infty)=\deg(p)$ is its usual degree.)

Note that if $\phi$ is a biholomorphic transformation of a neighborhood of $z_0$ onto a neighborhood of $\phi(z_0)$, then
\begin{gather*}
\deg(p,z_0)=\deg\big(p\circ\phi^{-1},\phi(z_0)\big).
 \end{gather*}

 \subsection{Singularities of equations}
Consider now the equation~(\ref{pde}) (in the principal form) with holomorphic coefficients represented by the operator~(\ref{pde-op.}), which we denote by $A$.
We~say that $z_0$ is a {\em regular point} of $A$ if{\samepage
\begin{gather*}
\begin{split}
& \deg(p,z_0)\leq0,\qquad \deg(q,z_0)\leq0,\qquad z_0\in\cc,\\
& \deg\bigg(p-\frac{2}{z},\infty\bigg)\leq-2,\qquad\deg(q,\infty)\leq-4,\qquad z_0=\infty.
\end{split}
\end{gather*}
Otherwise we say that it is a {\em singular point} of $A$.}

We~say that the singular point $z_0$ is {\em regular} or {\em Fuchsian} if
\begin{gather*}
 \deg(p,z_0)\leq1,\qquad \deg(q,z_0)\leq2,\qquad z_0\in\cc,
 \\
 \deg(p,\infty)\leq -1,\qquad \deg(q,\infty)\leq-2,\qquad z_0=\infty.
\end{gather*}
It is standard to introduce two {\em indices} of a Fuchsian singular point:
\begin{gather}
 \text{the roots of}\quad \lambda(\lambda-1)+p_{z_0,-1}\lambda+q_{z_0,-2}\quad \text{are called\quad{\em indices of $z_0\in\cc$},}\label{rho4}
 \\
 \text{the roots of}\quad \lambda(\lambda+1)-p_{\infty,-1}\lambda+q_{\infty,-2}\quad
 \text{are called\quad{\em indices of $z_0=\infty$}}.\label{rho5}
\end{gather}

The {\em rank of $A$ at $z_0$} is defined as follows. If $z_0$ is a regular point, we set $\rank(A,z_0)=0$. The case of rank equal to $\frac12$ is somewhat special:
\begin{gather*}
\rank(A,z_0):=\frac12\qquad \text{if}\quad
\deg\bigg(p-\frac1{2(z-z_0)},z_0\bigg)\leq 0,\quad \deg(q,z_0)\leq1,\quad z_0\in\cc,
\\
\rank(A,\infty):=\frac12\qquad \text{if}\quad
\deg\bigg(p-\frac3{2z},\infty\bigg)\leq-2,\quad \deg(q,\infty)\leq-3,\quad z_0=\infty.
\end{gather*}
If $\rank(A,z_0)\neq0,\frac12$, then we set
\begin{gather*}
\rank(A,z_0):=\max\bigg\{\deg(p,z_0),\, \frac12\deg(q,z_0),\,1\bigg\},\qquad z_0\in\cc,
\\
\rank(A,\infty):=\max\bigg\{\deg(p,\infty)+2,\, \frac12\deg(q,\infty)+2,\,1\bigg\},\qquad z_0=\infty.
\end{gather*}

The rank and indices of a singularity are invariants of biholomorphic transformations. For instance,
this is the case of homographies, that is $w=\frac{az+b}{cz+d}$, or $z=\frac{dw-b}{-cw+a}$,
where $ad-bc=1$. We~obtain
 \begin{gather} \label{pde3}
 \partial_z^2+p(z)\partial_z+q(z) =(-cw+a)^4 \partial_w^2\nonumber
 \\ \hphantom{\partial_z^2+p(z)\partial_z+q(z)}
 {}+\! \bigg(\!{-}2c(-cw+a)^3\!+p\bigg(\frac{dw-b}{-cw\!+a}\bigg)(-cw\!+a)^2\bigg)\partial_w
 +q\bigg(\frac{dw-b}{-cw\!+a}\bigg).\!\!\!\!
 \end{gather}
 In~order to obtain the principal form we need to divide~(\ref{pde3})
 by $(-cw+a)^4$.

Note that the rank is always an integer or a half-integer.
A singularity of rank $m-\frac12$ with $m\in\{1,2,\dots\}$ can be often treated
as a degeneration of a singularity of rank $m$. This motivates us to
introduce the {\em rounded rank}, denoted
$ \lceil\rank\rceil$:
\begin{gather*}
 \lceil\rank\rceil(A,z_0):=
 \begin{cases}\rank(A,z_0),&\text{if}\quad\rank(A,z_0)\in\{0,1,2,\dots\},
 \\[.5ex]
 \rank(A,z_0)+\frac12,&\text{if}\quad\rank(A,z_0)\in\big\{\frac12,\frac32,\frac52,\dots\big\}.
 \end{cases}
\end{gather*}
Equivalently,
\begin{gather*}
\lceil\rank\rceil(A,z_0):=
\bigg\lceil \max\bigg\{\deg(p,z_0),\, \frac12\deg(q,z_0),\,0\bigg\}\bigg\rceil,\qquad z_0\in\cc,
\\[.5ex]
\lceil\rank\rceil(A,\infty):=
\bigg\lceil\max\bigg\{\deg(p,\infty)+2,\, \frac12\deg(q,\infty)+2,\,0\bigg\}\bigg\rceil,\qquad z_0=\infty.
\end{gather*}
Here $\lceil\cdot\rceil$ is the {\em ceiling function}, that is
\begin{gather*}
\lceil x\rceil:=\inf\{n\in\zz\mid x\leq n\}.
\end{gather*}

The singularity $z_0$ is Fuchsian if its rank is $\frac12$ or $1$.
Thus $z_0$ is a Fuchsian singularity iff its rounded rank is $1$.

According to our definition, a Fuchsian singularity has rank $\frac12$ if its indices are $0$, $\frac12$.
The splitting of the Fuchsian case into two subcases, the rank $\12$ and $1$, is quite useful, even if its definition is not obvious. Nevertheless, in our paper we will not make much use of this splitting and both will be usually treated as one case, denoted $\un$.

\subsection{Sandwiching with functions}

The family of equations~(\ref{pde}) is preserved by several kinds of transformations of the form
\begin{gather}
\nonumber 
\e^{-r(z)}\big( \partial_z^2\!+p(z)\partial_z\!+q(z)\big)\e^{r(z)}
= \partial_z^2\!+\big(p(z)\!+2r'(z)\big)\partial_z\!+q(z)\!+ p(z)r'(z)\!+ r'(z)^2\!+r''(z)
\\ \hphantom{\e^{-r(z)}\big( \partial_z^2\!+p(z)\partial_z\!+q(z)\big)\e^{r(z)}}
{} =: \partial_z^2+\tilde p(z)\partial_z+\tilde q(z)=\tilde A.
 \label{pde-tra1}
\end{gather}

\noindent
{\bf Sandwiching with powers.}
For $r(z)=\kappa\log(z)$
\begin{gather}
(z-z_0)^{-\kappa} \big(\partial_z^2+p(z)\partial_z+q(z)\big) (z-z_0)^{\kappa}
 =\partial_z^2+\big(p(z)+2\kappa(z-z_0)^{-1}\big)\partial_z+q(z)\nonumber
 \\ \hphantom{(z-z_0)^{-\kappa} \big(\partial_z^2+p(z)\partial_z+q(z)\big) (z-z_0)^{\kappa} =}
{} +p(z)\kappa(z-z_0)^{-1}+(\kappa^2-\kappa)(z-z_0)^{-2}.\label{tra1}
\end{gather}
If $z_0\in\cc\cup\{\infty\}$ is a singularity of~(\ref{pde}) and $\rank(z_0)>1$, then the
transformation~(\ref{tra1}) preserves its rank.
If $z_0$ is a Fuchsian singularity, then after the transformation it is also Fuchsian.

If $\rho_1$, $\rho_2$ are the indices of~(\ref{pde}) at $z_0\in\cc$, then
 $\rho_1+\kappa$, $\rho_2+\kappa$ are the indices of~(\ref{tra1}) at $z_0$.
The same is true for $\infty$, except that the indices
of~(\ref{tra1}) at $\infty$ are $\rho_1-\kappa$, $\rho_2-\kappa$.

\medskip
\noindent{\bf Sandwiching with exponentials.}
Let $k=2,3,\dots$. We~have
\begin{gather}\nonumber
\exp\bigg(\frac{\kappa(z-z_0)^{-k+1}}{k-1}\bigg)
 \big(\partial_z^2+p(z)\partial_z+q(z)\big) \exp\bigg({-}\frac{\kappa(z-z_0)^{-k+1}}{k-1}\bigg)
 \\ \qquad\nonumber
{} =\partial_z^2+\big(p(z)+2 \kappa(z-z_0)^{-k}\big)\partial_z
 +q(z)+p(z)\kappa(z-z_0)^{-k}
 \\ \qquad\phantom{=}
 {}+\kappa^2(z-z_0)^{-2k}-\kappa k(z-z_0)^{-k-1}.\label{pde1}
 \end{gather}
Hence this transformation preserves $\rank(z_0)$ if it~is~$>k$,
and preserves or decreases $\rank(z_0)$ if it~is~$=k$. The transformation does not change the coefficients $p_{z_0,-1}$, $q_{z_0,-1}$, $q_{z_0,-2}$. Therefore, it~also does not change the indices of $z_0$.

The same is true for $\infty$ under the transformation
\begin{gather}
\exp\bigg({-}\frac{\kappa z^{k+1}}{k+1}\bigg)
\big(\partial_z^2+p(z)\partial_z+q(z)\big)
\exp\bigg(\frac{\kappa z^{k+1}}{k+1}\bigg)\nonumber
\\ \qquad
{}=\partial_z^2+\big(p(z)+2\kappa z^{k}\big)\partial_z
+q(z)+p(z)\kappa z^k+\kappa^2z^{2k}+\kappa k z^{k-1}.\label{pde2}
\end{gather}

More generally, we have transformations of the form~(\ref{pde-tra1}),
where
\begin{gather}
r(z)=\sum_{k=-m+1}^{-1}\frac{w_k}{k}(z-z_0)^{k}+w_{0}\log(z-z_0),\label{tra1.}
\\
\text{so that}\quad
r'(z)=\sum_{k=-m+1}^{0}w_k(z-z_0)^{k-1} \qquad \text{if}\quad z_0\in\cc;\nonumber
\\
r(z)=-\sum_{k=1}^{m-1}\frac{w_k}{k}z^{k}-w_{0}\log(z),\label{tra2.}
\\
\text{so that}\quad
r'(z)=-\sum_{k=0}^{m-1}w_kz^{k-1} \qquad \text{if}\quad z_0=\infty.\nonumber
\end{gather}
We~define the {\em absolute rank} of $A$ at $z_0$ as
\begin{gather*}
\arank(A,z_0):=\inf\big\{\rank(\tilde A,z_0)\big\},
\end{gather*}
where $\tilde A$ are all possible transforms of $A$ of the form~(\ref{pde-tra1}) with $r$ as in~(\ref{tra1.}) or~(\ref{tra2.}).

 \subsection{Half-integer rank}
\label{Half-integer rank}

 In~this subsection we discuss singular points with a half-integer rank. They are in a sense exceptional and have special properties.

 Suppose that the equation~(\ref{pde}) has a singular point at $z_0$ and $ \rank(A,z_0)=m+\frac12$, where $m=0,1,\dots$. It is easy to see that this implies
 \begin{gather*}
 \arank(A,z_0)= \rank(A,z_0) .
\end{gather*}

 Without loss of generality we can assume that
 the singularity is at $0$. This is equivalent to
 \begin{gather*}
 \deg(p,0)\leq m,\qquad \deg(q,0)=2m+1,\qquad m\geq1,
 \\
 \deg\bigg(p-\frac1{2z},0\bigg)\leq 0,\qquad \deg(q,0)\leq1,\qquad m=0.
\end{gather*}
 Let us make the substitution
 \begin{gather}\label{quadra}
 z=y^2,\qquad y=\sqrt{z}.
 \end{gather}
 Using $\partial_z=\frac{1}{2y}\partial_y$ we transform~(\ref{pde-op.}) into
 \begin{gather} \label{pde-sq}
 \frac{1}{4y^2}\partial_y^2-\frac{1}{4y^3}\partial_y+\frac{p(y^2)}{2y}\partial_y +q\big(y^2\big).
\end{gather}
 Multiplying~(\ref{pde-sq}) by $4y^2$ we obtain
 an equation in the principal form
 \begin{gather} \label{pde-sq1}
 \partial_y^2+2y\bigg(p\big(y^2\big)-\frac{1}{2y^2}\bigg)\partial_y+4y^2q\big(y^2\big).
 \end{gather}
 Now
 \begin{gather*}
 \deg\bigg(2y\bigg(p\big(y^2\big)-\frac{1}{2y^2}\bigg),0\bigg)\leq 2m-1,
 \\
 \deg\big(4y^2q\big(y^2\big),0\big)=2(2m+1)-2=4m,\qquad m\geq1,
 \\
 \deg\big(4y^2q\big(y^2\big),0\big)\leq0,\qquad m=0.
\end{gather*}
 Thus the rank of~(\ref{pde-sq1}) at zero is $2m$.

 Thus we have shown that by a quadratic substitution
 we can reduce a singularity of a half-integer rank to a singularity of integer rank. The resulting equation~(\ref{pde-sq1}) is in addition invariant with respect to the substitution $y\to-y$ and the rank of the singularity is even.

 Note that our definition of rank $\frac12$ has been chosen so that
 the above quadratic reduction works for all half-integer ranks.

 \subsection{Simplifying the equation}

 Suppose that the equation~(\ref{pde}) has a singular point at $z_0$.
 Obviously, we have 3 exclusive possibilities:
 \begin{enumerate}\itemsep=-.5ex
 \item[(0)] $\arank(A,z_0)\leq1$;
 \item[(1)] $\arank(A,z_0)$ is an integer and $\geq2$;
 \item[(2)] $\arank(A,z_0)$ is a half-integer and $\geq\frac32$.
\end{enumerate}
 We~would like to simplify the equation around this singularity by
 sandwiching with $\e^r$, where $r$ is given by~(\ref{tra1.}) or~(\ref{tra2.}). The transformed operator will be, as usual, denoted $\tilde A$.
 We~will see that the simplification will be quite different
 depending on Case (1) and (2). (Case (0) is simple enough, therefore we do not discuss it in the following proposition).

\begin{Proposition}\label{ground}\qquad
\begin{enumerate}\itemsep=0pt\item[$(1)$]
 If $\arank(A,z_0)$ is an integer and $\geq2$, then there exist exactly two transformations
 such that
 \begin{gather}
 \arank(A,z_0)= \rank(\tilde A,z_0)= \deg(\tilde p,z_0)\geq\deg(\tilde q,z_0),\qquad
 z_0\in\cc,\nonumber
 \\
 \label{(1)}
 \arank(A,\infty)= \rank(\tilde A,\infty)= \deg(\tilde p,\infty)+2\geq\deg(\tilde q,\infty)+4,\qquad z_0=\infty.
 \end{gather}
\item[$(2)$]
 If $\arank(A,z_0)$ is a half-integer and $\geq\frac32$, then there exists a unique transformation such that
 \begin{gather}
\deg(\tilde p,z_0)\leq0\quad\ \text{and}\quad\ \arank(A,z_0)= \rank(\tilde A,z_0)=
 \frac12\deg(\tilde q,z_0),\quad z_0\in\cc,\nonumber
 \\
\label{(2)}
\deg(\tilde p,\infty)\leq-2\quad \text{and}\quad \arank(A,\infty)=
 \rank(\tilde A,\infty)= \frac12\deg(\tilde q,\infty)+2,\quad z_0=\infty.
 \end{gather}
\end{enumerate} \end{Proposition}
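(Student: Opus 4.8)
The plan is to control the singularity through a single invariant. For $A=\partial_z^2+p\partial_z+q$ put $I:=q-\frac12p'-\frac14p^2$, the coefficient left once the first-order term is normalized away. Substituting $\tilde p=p+2r'$ and $\tilde q=q+pr'+(r')^2+r''$ from \eqref{pde-tra1} into $\tilde q-\frac12\tilde p'-\frac14\tilde p^2$, one finds that \emph{every} sandwiching leaves $I$ unchanged: $\tilde I=I$. The first real step is then the identity $\arank(A,z_0)=\frac12\deg(I,z_0)$ whenever $\deg(I,z_0)\ge3$ (while $\arank(A,z_0)\le1$ when $\deg(I,z_0)\le2$, which is Case~(0) and so excluded here). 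For the lower bound one uses $\tilde q=I+\frac12\tilde p'+\frac14\tilde p^2$: if $\deg(\tilde p,z_0)<\frac12\deg(I,z_0)$ then both $\frac12\tilde p'$ and $\frac14\tilde p^2$ have degree strictly below $\deg(I,z_0)$, so $\deg(\tilde q,z_0)=\deg(I,z_0)$ and $\rank(\tilde A,z_0)\ge\frac12\deg(I,z_0)$; otherwise $\rank(\tilde A,z_0)\ge\deg(\tilde p,z_0)\ge\frac12\deg(I,z_0)$ directly. For the upper bound one takes $r'$ to be the principal part of $-\frac12p$, which makes $\tilde p$ holomorphic at $z_0$, leaves $\deg(\tilde q,z_0)=\deg(I,z_0)$, and hence gives $\rank(\tilde A,z_0)=\frac12\deg(I,z_0)$ (genuinely, not $\frac12$, since $\deg(\tilde q,z_0)\ge3$). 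For $z_0=\infty$ the ``$+2$'' and ``$+4$'' in the definitions of rank and in \eqref{(1)}, \eqref{(2)} are precisely the shifts produced by the corresponding facts about $I$, and everything below transports through the homography $w=1/z$, which carries \eqref{tra2.} at $\infty$ bijectively onto \eqref{tra1.} at $0$; so I will work at $z_0=0$.

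Now fix $z_0=0$, and note that a transformation of the form \eqref{tra1.} is determined by the principal part of $r'$, equivalently (since $\tilde p=p+2r'$) by the principal part of $\tilde p$. In Case~(1) the identity gives $\deg(I,0)=2N$ with $N\ge2$, and I claim the conditions in \eqref{(1)} are equivalent to $\deg(\tilde q,0)\le N$ alone: from $\tilde q=I+\frac12\tilde p'+\frac14\tilde p^2$ one checks that $\deg(\tilde p,0)>N$ forces $\deg(\tilde q,0)=2\deg(\tilde p,0)>N$ and $\deg(\tilde p,0)<N$ forces $\deg(\tilde q,0)=2N>N$, so $\deg(\tilde q,0)\le N$ already pins down $\deg(\tilde p,0)=N$, whence $\rank(\tilde A,0)=N=\arank$ and $\deg(\tilde p,0)\ge\deg(\tilde q,0)$ are automatic. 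Writing the principal part of $\tilde p$ as $b_{-N}z^{-N}+\dots+b_{-1}z^{-1}$, the requirement $\deg(\tilde q,0)\le N$ — i.e.\ vanishing of the coefficients of $z^{-2N},z^{-2N+1},\dots,z^{-N-1}$ in $I+\frac12\tilde p'+\frac14\tilde p^2$ — becomes $N$ polynomial equations in $b_{-N},\dots,b_{-1}$. Their structure is triangular: the first equation is the quadratic $\frac14b_{-N}^2+I_{-2N}=0$, with two nonzero roots since $I_{-2N}\neq0$; and for $1\le k\le N-1$ the $k$-th equation is linear in $b_{-N+k}$ with leading coefficient $\frac12b_{-N}\neq0$ (the only place the $\tilde p'$-term enters is the last equation, as a harmless constant shift). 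Hence the system has exactly two solutions, i.e.\ exactly two transformations satisfy \eqref{(1)}.

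In Case~(2) the identity gives $\deg(I,0)=2m+1$ with $m\ge1$, so $\arank=m+\frac12$. The condition $\deg(\tilde p,0)\le0$ in \eqref{(2)} forces the principal part of $r'$ to be the principal part of $-\frac12p$ — the very transform used for the upper bound above, and it is uniquely determined — after which $\tilde p$ is holomorphic at $0$, so automatically $\deg(\tilde q,0)=\deg(I,0)=2m+1$ and $\rank(\tilde A,0)=m+\frac12=\arank=\frac12\deg(\tilde q,0)$; hence there is exactly one such transformation. I expect the real content to lie entirely in the invariance of $I$ and the identity $\arank(A,z_0)=\frac12\deg(I,z_0)$; the main labour will be the degree bookkeeping for $\tilde q=I+\frac12\tilde p'+\frac14\tilde p^2$ in each sub-case, confirming that the Case~(1) system is genuinely triangular with the $\tilde p'$-term affecting only the last equation, and handling the point at $\infty$ carefully through $w=1/z$ (in particular that homographies do not disturb $I$ beyond the expected degree shift, their Schwarzian being zero).
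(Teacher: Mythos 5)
Your argument is correct and takes a genuinely different route from the paper. The paper works directly with the recurrence \eqref{recu} and an iterative algorithm: Transformation III (partial cancellation when $p_{-m}^2=4q_{-2m}$, lowering the rank) is applied repeatedly until either Transformation I (killing the principal part of $p$ in the half-integer case) or Transformation II (the quadratic \eqref{qqq} with its two roots, followed by linear recurrences) terminates the process; the count ``two''/``one'' is read off from the branching of this algorithm. You instead exploit the normal-form invariant $I=q-\frac12p'-\frac14p^2$, which is indeed unchanged by every sandwiching \eqref{pde-tra1}, prove the intrinsic identity $\arank(A,z_0)=\frac12\deg(I,z_0)$ once $\deg(I,z_0)\geq3$ (your two-sided bound is sound, and the rank-$\frac12$ exceptional clause cannot interfere since it would force $\deg(I,z_0)\leq2$), and then reduce \eqref{(1)} to the single condition $\deg(\tilde q,z_0)\leq N$, which becomes a triangular system in the principal-part coefficients of $\tilde p$: a nondegenerate quadratic for the top coefficient and linear equations with leading coefficient $\frac12 b_{-N}\neq0$ below, whence exactly two solutions; for \eqref{(2)} the constraint $\deg(\tilde p,z_0)\leq0$ pins down $r'$ outright. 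This buys a closed formula for the absolute rank and a cleaner count (the paper's uniqueness statement is implicit in the algorithm), at the price of the degree bookkeeping you carry out; the paper's recurrence formalism, on the other hand, is reused elsewhere (e.g., in Proposition~\ref{fuchsi}).

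One point to tighten: the reduction of $z_0=\infty$ to $z_0=0$ is not a literal pullback of the conditions. Under $w=1/z$ one has $q_{\mathrm{new}}(w)=w^{-4}q(1/w)$ but $p_{\mathrm{new}}(w)=\frac2w-w^{-2}p(1/w)$, so in Case~(2) the hypothesis $\deg(\tilde p,\infty)\leq-2$ corresponds at $0$ to $\tilde p_{\mathrm{new}}-\frac2w$ being holomorphic, not to $\deg(\tilde p_{\mathrm{new}},0)\leq0$. This is harmless --- either run the same computation directly at $\infty$ with \eqref{tra2.} (the extra $\frac2w$ contributes $-\frac1{w^2}+\frac1{w^2}=0$ to $\frac12\tilde p'+\frac14\tilde p^2$ at leading order, so $\deg(\tilde q)=\deg(I)$ still holds and uniqueness of $r'$ is unchanged), or record the corrected dictionary --- but as written the phrase ``everything below transports'' skips this adjustment, which you should make explicit.
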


\begin{proof}
 Without loss of generality we can assume that
 the singularity is at $0$.
 We~will use the identity\vspace{-2ex}
 \begin{gather}
 \tilde q(z) =\!\!\sum_{n=-2m}^\infty
 z^n\Bigg(\sum_{k=\max(-m,n+1)}^{\min(-1,n+m)}\!\!\!\!\!\!\!\!\!\! w_{k+1}w_{n-k+1}
+(n+1)w_{n+2}+\!\!\! \sum_{k=-m}^{\min(-1,n+m)}\!\!\!\!\!\!\!\!\!\! w_{k+1}p_{n-k}+q_n\Bigg).
\label{recu}
\end{gather}

 We~will apply one of the following three transformations, denoted I, II and III.

{\it Transformation I.} Suppose that the rank of the initial equation is $m-\frac12$, $m=2,3,\dots$.
Then $\deg (q,0)=2m-1$ and\vspace{-1.5ex}
\begin{gather*}
p(z)=\sum_{j=-m+1}^\infty p_jz^j.\vspace{-1ex}
\end{gather*}
 We~choose $r(z)$ such that\vspace{-1.5ex}
\begin{gather*}
r'(z)=-\frac12\sum_{j=-m+1}^{-1} p_jz^j.\vspace{-1ex}
\end{gather*}
 Then $\deg(\tilde p,0)\leq0$ and $\deg(\tilde q,0)=\deg(q,0)$.
The transformed equation satisfies~(\ref{(2)}).

For transformations II and III we suppose that the
rank of the initial equation is $m=2,3,\dots$.

{\it Transformation II.} Assume that
 \begin{gather*}
 p_{-m}^2\neq 4q_{-2m}.
 \end{gather*}
 Let $w_{-m+1}$ be one of two solutions of
 \begin{gather}\label{qqq}
0= w_{-m+1}^2+w_{-m+1}p_{-m}+q_{-2m}.
\end{gather}
 Then $\tilde q_{-2m}=0$.
 Equating $\tilde q_{-m+j}=0$ for $j=-m+1,\dots,-1$ we obtain from~(\ref{recu})
 the recurrence relations\vspace{-1ex}
\begin{gather*}
0= w_{j+1}(2w_{-m+1}+p_{-m})
 \\ \hphantom{0=}
 {}
 +\sum_{k=-m+1}^{j-1}w_{k+1}w_{-m+j-k+1}+(-m+j+1)w_{-m+j+2}+\sum_{k=-m}^{j-1}w_{k+1}p_{-m+j-k}+q_{-m+j}.
\end{gather*}
Using
\begin{gather*}
2w_{-m+1}+p_{-m}=\sqrt{p_{-m}^2-4q_{-2m}}\neq0
\end{gather*}
we can solve the recurrence relations. The transformed equation
satisfies~(\ref{(1)}).

 {\it Transformation III.} If
\begin{gather*}
 p_{-m}^2= 4q_{-2m},
 \end{gather*}
 then we sandwich with $\e^r$, where $r(z)=\frac{p_{-m}z^{-m+1}}{2(-m+1)}$. The transformed operator $\tilde A$ has $\deg(\tilde p,0)\leq m-1$ and $\deg(\tilde q,0)\leq 2m-1$. Thus after the transformation
 $\rank\big(0,\tilde A\big)\leq m-\frac12$. If the resulting rank is half-integer, then we apply I and stop. If the resulting rank is an integer, we apply II and stop, or III and we iterate.

 We~have thus two possibilities:
 \begin{enumerate}\itemsep=-1pt
\setlength{\leftskip}{0.65cm}
\item[{\it Case} (1)] First a finite number of III, and then II. At the end we obtain equation satisfying~(\ref{(1)}). Steps III are uniquely determined and II has two possibilities.

\item[{\it Case} (2)] First a finite number of III, and then I obtaining~(\ref{(2)}). All steps are uniquely determined.
\hfill \qed
\end{enumerate}\renewcommand{\qed}{}
 \end{proof}

We~will say that the operator $A$ has a {\em grounded form at $z_0$}
if $ \deg(p,z_0)\geq\deg( q,z_0)$.
If $z_0$ is Fuchsian, then this is equivalent to one of the indices being $0$.

It follows from Proposition~\ref{ground} that if $\arank(A,z_0)$ is an integer or $\frac12$, then
the equation can be brought to a grounded form at $z_0$.

 \subsection{Solutions in terms of formal power series}

 We~consider the equation~(\ref{pde}) and try to solve it
 in terms of a nontrivial, not necessarily convergent power series
 \begin{gather*}
 \sum_{k=0}^\infty v_kz^k.
\end{gather*}
As we will see, this is not always possible.

\begin{Proposition}\label{series1}
 Set $m:=\deg(p,0)$ and $l:=\deg(q,0)$.
 \begin{enumerate}\label{item}\itemsep=0pt
 \item[$1.$] If $m\leq0$ and $l\leq0$, then for any $v_0$, $v_1$ there exists a power series solution.
 \item[$2.$] If $m\leq1$ and $l\leq2$, then there are no power series solutions unless for some $n=1,2,\dots$ we have
 \begin{gather}
 n(n-1)+np_{-1}+q_{-2}=0.\label{pwp}
 \end{gather}
 \item[$3.$]
 If $m\geq2$ and $l\leq m$, then for any $v_0$ there is a unique power series solution.
 \item[$4.$]
 If $m\geq2$ and $l=m+1$, then there is no power series solution unless for some $n=1,2,\dots$
 \begin{gather*}
 p_{-m}n+q_{-m-1}=0.
 \end{gather*}
 \item[$5.$]
 If $m\geq1$ and $l\geq m+2$, then there are no power series solutions.
 \end{enumerate}
 \end{Proposition}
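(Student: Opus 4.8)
The plan is to substitute the formal series $v(z)=\sum_{k\ge 0}v_kz^k$ into~\eqref{pde} written in the form $\big(\partial_z^2+p(z)\partial_z+q(z)\big)v(z)=0$, multiply through by a suitable power of $z$ to clear denominators, and read off the recurrence relation among the coefficients $v_k$. Writing $p(z)=\sum_{j\ge -m}p_jz^j$ and $q(z)=\sum_{j\ge -l}q_jz^j$, the coefficient of $z^n$ in $\big(\partial_z^2+p(z)\partial_z+q(z)\big)v(z)$ is
\begin{gather*}
(n+2)(n+1)v_{n+2}+\sum_{k}(k+1)p_{n-k}v_{k+1}+\sum_{k}q_{n-k}v_k,
\end{gather*}
and setting all such coefficients to zero (for $n$ ranging from the lowest occurring power upward) gives the system to be analyzed. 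The whole proposition is then a matter of determining, in each of the five degree regimes, what the lowest-order equations force.

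First I would handle the two ``regular-type'' cases (items~1 and~2), where $m\le 1$, $l\le 2$, so that the lowest power of $z$ appearing is $z^{n}$ with $n\ge -2$. Here the coefficient of $z^{-2}$ (resp.\ $z^{n}$ for the relevant shift) reproduces the indicial-type expression $(k-?)(k-1)+\dots$; the key computation is that the coefficient of $v_n$ in the $z^{n-2}$-equation is exactly $n(n-1)+np_{-1}+q_{-2}$. In item~1 ($m\le 0$, $l\le 0$) the point is regular, the recurrence expresses $v_{n+2}$ in terms of earlier coefficients with leading coefficient $(n+2)(n+1)\ne 0$, so $v_0,v_1$ are free and everything else is determined — this is the classical existence of two independent solutions. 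In item~2 ($m\le 1$, $l\le 2$, Fuchsian) the $z^{-2}$-equation reads $q_{-2}v_0=0$ if we insist $v_0\ne 0$; more precisely, a nontrivial series forces its lowest nonvanishing coefficient $v_n$ (with $n\ge 1$, since $n=0$ would need $q_{-2}=0$, and then one renames) to satisfy $n(n-1)+np_{-1}+q_{-2}=0$, which is~\eqref{pwp}. I would state it carefully: a power series solution with $v_0\ne 0$ requires $q_{-2}=0$; in general a nontrivial one requires~\eqref{pwp} for some $n\ge 1$.

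Next, the ``irregular'' cases (items~3, 4, 5), where $m\ge 2$. Now the lowest power of $z$ in $p(z)\partial_z v(z)$ is $z^{-m}$ (from $p_{-m}\cdot 1\cdot v_1$ ... wait, from $p_{-m}z^{-m}\cdot v_1$, i.e.\ the term $p_{-m}v_1 z^{-m}$ together with $\frac{d}{dz}$ shifting; more carefully the coefficient of $z^{-m}$ gets $p_{-m}\cdot(0{+}1)v_1$? — one must track indices). The clean statement: when $l\le m$ (item~3), the lowest equation, coming from $z^{-m}$, determines $v_1$ in terms of $v_0$ (the coefficient of $v_1$ being $p_{-m}\ne 0$), and each successive equation $z^{-m+j}$ determines $v_{j+1}$ from $v_0,\dots,v_j$ with leading coefficient a nonzero multiple of $p_{-m}$ (possibly shifted by $q_{-2m}$ terms, but $q$ does not reach that low since $l\le m<2m$), giving existence and uniqueness given $v_0$. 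When $l=m+1$ (item~4), the new lowest power is $z^{-m-1}$, and its equation is $p_{-m}\,1\cdot v_? + q_{-m-1}v_0$-type; tracking it gives the obstruction $p_{-m}n+q_{-m-1}=0$ for the first nonvanishing index $n$. When $l\ge m+2$ (item~5), the term $q_{-l}v_0 z^{-l}$ sits strictly below everything coming from $\partial_z^2 v$ and $p\partial_z v$ (whose lowest contributions to a series are at powers $\ge -m \ge -l+2$), so the $z^{-l}$-equation forces $v_0=0$, then $z^{-l+1}$ forces $v_1=0$, and so on — no nontrivial series exists.

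**Main obstacle.** The only real care needed is bookkeeping of the shift: $p(z)\partial_z v$ contributes $\sum_n z^n\sum_k (k+1)p_{n-k}v_{k+1}$, so the coefficient of $v_{j+1}$ in the equation at level $z^{-m+j}$ is $(j+1-? )$... — one must be precise about which $v_k$ is ``leading'' in each equation and verify its coefficient is nonzero (namely $(n+2)(n+1)$ in the regular case, $p_{-m}$ in the irregular case, $p_{-m}n+q_{-m-1}$ resp.\ $n(n-1)+np_{-1}+q_{-2}$ at the critical level). I expect to organize this by introducing $n_0$ = the index of the lowest surviving term of $v$ and examining the first few equations; the verification that the leading coefficient is $(n_0+2)(n_0+1)$ or $p_{-m}$ is a one-line index chase, and the rest of the recurrences are routine and need not be written out in full.
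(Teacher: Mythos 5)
Your proposal is correct and follows essentially the same route as the paper: substitute the formal series, read off the recurrence $(n+2)(n+1)v_{n+2}+\sum_k k\,p_{n+1-k}v_k+\sum_k q_{n-k}v_k=0$, and in each degree regime identify the leading coefficient of the highest-index unknown ($(n+2)(n+1)$ in the regular case, the indicial polynomial $n(n-1)+np_{-1}+q_{-2}$, $jp_{-m}$ for item~3, $jp_{-m}+q_{-m-1}$ for item~4, and $q_{-l}$ for item~5) to drive the induction, which is exactly the paper's case-by-case analysis, with items~1--2 likewise referred to standard Cauchy/Fuchsian theory. The only soft spot is your parenthetical treatment of the lowest index $n=0$ in item~2 (``one renames''), but the paper's own proof is no more explicit there.
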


 \begin{proof} By equating
 the terms at $z^n$ to zero in
 \begin{gather*}
 \bigg(\partial_z^2+\sum_{j=-m}^\infty p_j z^j\partial_z+\sum_{j=-l}^\infty q_j z^j\bigg) \sum_{k=0}^\infty v_k z^k=0
 \end{gather*}
 we obtain the following equations:
\begin{gather*}
 (n+2)(n+1)v_{n+2}+\sum_{k=1}^{n+1+m}p_{n+1-k}kv_k +\sum_{k=0}^{n+l}q_{n-k}v_k=0,\qquad n\in\zz.
 \end{gather*}

Let us prove (5). For $j:=n+l=0,\dots,l-m-1$ we obtain the equations
\begin{gather*}
0=q_{-l}v_0,
\\
\cdots\cdots\cdots\cdots\cdots\cdots\cdots\cdots
\\
0=q_{-l}v_j+\cdots+q_{-l+j}v_0,
\\
\cdots\cdots\cdots\cdots\cdots\cdots\cdots\cdots\cdots\cdots
\\
0=q_{-l}v_{l-m-1}+\cdots+q_{-m-1}v_0.
 \end{gather*}
 Thus $0=v_0=\cdots=v_{l-m-1}$.

 If $j\geq l-m$, there are additional terms coming from the 1st order derivative. $v_k$ with the highest $k$ contained in such a term has $k=m-l+j+1$. But because of $l\geq m+2$ we have $k\leq j-1$. Thus $v_k=0$ by one of previous recursion steps.

 If $j\geq l$, there is an additional term coming from the 2nd order derivative, involving $v_k$ with $k=-l+j+2$. But $l\geq3$ implies $k\leq j-1$.
 Again, this $v_k=0$ by one of previous recursion steps.

 Let us prove (4). We~have recursion relations
 \begin{gather*}
 0=q_{-l}v_0,
 \\
 0=(p_{-l+1}+q_{-l})v_1+q_{-l+1}v_0,
 \\
 \cdots\cdots\cdots\cdots\cdots\cdots\cdots\cdots\cdots\cdots
 \\
 0=(jp_{-l+1}+q_{-l})v_j+\cdots,
 \end{gather*}
 where dots denote terms depending on $v_{j-1},\dots,v_0$.
 If~(\ref{pwp}) has no solutions, then we can solve the recurrence obtaining
 $0=v_0=v_1=\cdots$.

 Let us prove (3). We~have the recursion relations
 \begin{gather}
 0=p_{-m}v_1+q_{-l}v_0,\nonumber
 \\
 0=p_{-m}2v_2+p_{-m+1}v_1+q_{-m}v_1+q_{-m+1}v_0,\label{w2}
 \\
 \cdots\cdots\cdots\cdots\cdots\cdots\cdots\cdots\cdots\cdots\cdots\cdots\cdots\cdots\nonumber
 \\
 0=jp_{-m}v_j+\cdots\nonumber,
 \end{gather}
 where $\cdots$ involve $v_0,\dots,v_{j-1}$.
If $m=2$, there is an additional term $2v_0$ in~(\ref{w2}).
 Note that $p_{-m}\neq0$.
 Hence, for any $v_0$ we can solve the recurrence obtaining $v_1,v_2,\dots$.

(2) follows immediately from the well-known theory of solutions around a Fuchsian singular point.

 (1) is the well-known fact about the Cauchy problem in the regular case. \end{proof}

\subsection{Thom\'e solutions}

By the so-called Frobenius method, if $z_0$ is a Fuchsian singularity
and $\rho_1,\rho_2$ are its indices such that
$\rho_1-\rho_2\not\in\zz$, then solutions of the equation~(\ref{pde})
are spanned by two convergent power series indexed by $i=1,2$ with $v_{i,0}\neq0$:
\begin{gather*}
\sum_{j=0}^\infty v_{i,j}(z-z_0)^{j+\rho_i},\qquad z_0\in \cc,
\\
\sum_{j=-\infty}^0 v_{i,j}z^{j-\rho_i},\qquad z_0=\infty.
\end{gather*}
If $\rho_1-\rho_2\in\zz$ this is not always true. One can then assume that $\rho_1-\rho_2\geq0$. There exists one solution as above with $i=1$
and the second has the form
\begin{gather*}
\sum_{j=0}^\infty v_{2,j}(z-z_0)^{\rho_2+j}+\log z
\sum_{j=0}^\infty v_{1,j}(z-z_0)^{\rho_1+j},\qquad z_0\in \cc,
\\
\sum_{j=-\infty}^0 v_{2,j}z^{-\rho_2+j} +\log z
\sum_{j=-\infty}^0 v_{1,j}z^{-\rho_1+j},\qquad z_0=\infty.
 \end{gather*}

 If the singular point is not Fuchsian, then we can also look for solutions in a similar form, however the resulting power series are usually no longer convergent. One obtains the so-called {\em Thom\'e solutions}. Note that in some way the situation is simpler, because we do not have the logarithmic case. On the other hand, half-integer ranks need to be treated separately and lead to power series in $\sqrt{z}$.

\begin{Proposition} \label{thome}
Let $z_0$ be a singular point of $A$ with $\rank(A,z_0)=n$.
 Then there exist two formal solutions of $A$, indexed by $i=1,2$.
 \begin{enumerate} \label{thome1}\itemsep=0pt
\item[$1.$] If $\arank(A,z_0)$ is an integer $\geq2$,
 they have the form
 \begin{gather*}
 \exp\Bigg(\sum_{k=-n+1}^{-1}\frac{(z-z_0)^k}{k}w_{i,k}\Bigg)
 \sum_{j=0}^\infty v_{i,j}(z-z_0)^{w_{i,0}+j},\qquad z_0\in \cc,
 \\
 \exp\Bigg({-}\sum_{k=1}^{n-1}\frac{z^k}{k}w_{i,k}\Bigg)
\sum_{j=-\infty}^0 v_{i,j}z^{-w_{i,0}+j},\qquad z_0=\infty.
 \end{gather*}
 \item[$2.$]
 Let $\arank(A,z_0)$ be a half integer $\geq\frac32$.
 The two formal solutions have the form
 \begin{gather*}
 \exp\Bigg(\mathop{{\sum}'}_{k=-n+1}^{-\frac12}\frac{(z-z_0)^k}{k}w_{i,k} \Bigg)
 \mathop{{\sum}'}_{j=0}^\infty v_{i,j}(z-z_0)^{w_{i,0}+j},\qquad z_0\in \cc,
 \\
 \exp\Bigg({-}\mathop{{\sum}'}_{k=\frac12}^{n-1}\frac{z^k}{k}w_{i,k}\Bigg)
\mathop{{\sum}'}_{j=-\infty}^0 v_{i,j}z^{-w_{i,0}+j},\qquad z_0=\infty.
 \end{gather*}
 Here $\mathop{{\sum}'}$ denotes the sum where the index $k$ within its range runs over
 both integers and half-integers. We~have
 \beq w_{0,k}=(-1)^{2k}w_{1,k},\qquad v_{0,k}=(-1)^{2k}v_{1,k}.
\eeq
 \end{enumerate}
 \end{Proposition}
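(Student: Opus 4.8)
The plan is to lean on two results already in place: the normal form reduction of Proposition~\ref{ground} and the recurrence analysis of Proposition~\ref{series1}, supplemented in the half-integer case by the quadratic substitution $z=y^2$ of Section~\ref{Half-integer rank}. Using the biholomorphic invariance of the rank I would first reduce to $z_0=0$; the case $z_0=\infty$ follows via $z\mapsto1/z$, under which the two shapes displayed in the statement are interchanged (cf.~\eqref{pde3}). When $\arank(A,0)\le1$ the singularity is Fuchsian or regular after a suitable sandwiching, and the two formal solutions are the classical Frobenius ones recalled just before the proposition, so the substance lies in the two displayed cases.

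For Case~1, with $\arank(A,0)$ an integer $\ge2$, I would invoke Proposition~\ref{ground}, part~(1): there are exactly two transformations $\e^{r^{(i)}}$, $i=1,2$, with $r^{(i)}$ of the form~\eqref{tra1.} --- a uniquely determined finite sequence of Transformation~III steps followed by Transformation~II with one of its two choices of root --- carrying $A$ to a grounded operator $\tilde A^{(i)}$ with $\rank(\tilde A^{(i)},0)=\arank(A,0)=\deg(\tilde p^{(i)},0)\ge\deg(\tilde q^{(i)},0)$. Since this rank is $\ge2$, Proposition~\ref{series1}, part~3, supplies for each $i$ a formal power series solution $\phi_i=\sum_{j\ge0}v_{i,j}z^j$ with $v_{i,0}=1$, and then $u_i:=\e^{r^{(i)}}\phi_i$ solves $Au=0$. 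Writing $r^{(i)}(z)=\sum_{k=-n+1}^{-1}\frac{w_{i,k}}{k}z^{k}+w_{i,0}\log z$ --- the logarithmic term being exactly the coefficient produced by the last step of Transformation~II, while the whole sequence introduces only poles of order $\le n-1$ --- puts $u_i$ into the asserted shape, with $w_{i,0}$ the exponent shift. For linear independence I would use that the two roots of the quadratic in Transformation~II are distinct, so $r^{(1)}-r^{(2)}$ has a genuine pole at $0$; hence $\e^{r^{(1)}-r^{(2)}}$ is not a formal Laurent series and $u_1,u_2$ cannot be proportional.

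For Case~2, with $\arank(A,0)=\mu$ a half-integer $\ge\frac{3}{2}$, I would first apply Proposition~\ref{ground}, part~(2), reducing $A$ by a unique $\e^{r_0}$ to $\tilde A$ with $\deg(\tilde p,0)\le0$ and $\frac{1}{2}\deg(\tilde q,0)=\rank(\tilde A,0)=\mu$, hence $\deg(\tilde q,0)=2\mu$. The substitution $z=y^2$ of Section~\ref{Half-integer rank} turns $\tilde A$ into an operator $\hat A$ of the shape~\eqref{pde-sq1} with $\rank(\hat A,0)=2\mu-1$, an integer $\ge2$; it is invariant under $y\mapsto-y$, its first-order coefficient carries only odd powers of $y$ and its zeroth-order coefficient only even powers, so $\hat p_{-(2\mu-1)}=0$ while $\hat q_{-(4\mu-2)}\neq0$ (the latter because $\deg(\tilde q,0)=2\mu$ exactly). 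Thus the leading quadratic of Transformation~II for $\hat A$ is $w^2+\hat q_{-(4\mu-2)}$, which has nonzero discriminant; no Transformation~III step is needed and $\arank(\hat A,0)=2\mu-1$, so Case~1 applies to $\hat A$ and yields Thom\'e solutions $\hat u_1(y),\hat u_2(y)$ whose leading exponential coefficients are the two roots $\pm\sqrt{-\hat q_{-(4\mu-2)}}$. Hence $y\mapsto-y$ interchanges $\hat u_1$ and $\hat u_2$, and with the normalization $\hat v_{i,0}=1$ this forces $\hat w_{2,k}=(-1)^{k}\hat w_{1,k}$ and $\hat v_{2,k}=(-1)^{k}\hat v_{1,k}$. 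Pulling back, $u_i:=\e^{-r_0}\,\hat u_i(\sqrt z)$ solve $Au=0$; in the variable $y=\sqrt z$ they read $\e^{-r_0(y^2)}\hat u_i(y)$, and since $\e^{-r_0(y^2)}$ is even in $y$ the relation $u_2(y)=u_1(-y)$ survives. Re-expanding in $z$ (a power $y^{2k}$ becoming $z^{k}$, so that $y\mapsto-y$ multiplies it by $(-1)^{2k}$), and checking that every exponential contribution --- from $\e^{-r_0}$ and from the $\hat u_i$ --- has order at most $n-1$ with $n=\rank(A,0)$, would put $u_1,u_2$ into the stated form and give $w_{0,k}=(-1)^{2k}w_{1,k}$, $v_{0,k}=(-1)^{2k}v_{1,k}$.

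I expect the main difficulty to be bookkeeping rather than any conceptual step: existence and independence of the two solutions fall straight out of Propositions~\ref{ground} and~\ref{series1}, but one must track carefully --- through the uniquely determined sequence of elementary sandwichings, and in Case~2 through $z=y^2$ and back --- that the accumulated exponential has exactly the exponent range $-n+1,\dots,-1$ (respectively $-n+1,-n+\frac{3}{2},\dots,-\frac{1}{2}$) asserted, and that the single surviving simple-pole/logarithmic contribution is precisely the exponent shift $w_{i,0}$. In Case~2 there is also the small but essential point that $y\mapsto-y$ must genuinely exchange the two formal solutions rather than fix one of them --- which is exactly what $\hat p_{-(2\mu-1)}=0$ guarantees.
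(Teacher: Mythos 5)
Your proposal is correct and follows essentially the same route as the paper's proof: reduce to a grounded form via Proposition~\ref{ground}, obtain the formal power series from Proposition~\ref{series1}(3), and in the half-integer case pass through the quadratic substitution of Section~\ref{Half-integer rank}, using $\sqrt z\mapsto-\sqrt z$ together with $\tilde w_{-2m+1}=\sqrt{-\tilde q_{-4m}}\neq0$ to produce and distinguish the two solutions. Your explicit independence argument in the integer case (distinct roots of the quadratic in Transformation~II, so $r^{(1)}-r^{(2)}$ has a pole) is a small addition the paper leaves implicit, but not a different method.
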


\begin{proof} For simplicity, assume that $z_0=0$.

Suppose that $\arank(A,0)$ is an integer $\geq2$.
By Proposition~\ref{ground}(1) we can transform the equation to
a grounded form in two distinct ways. By Proposition~\ref{series1}(3), the grounded form has a solution in terms of the power series.

If $\arank(A,0)=m+\frac12$ is a half-integer $\geq \frac32$, first we reduce the equation to
the form with a half-integer rank, see Proposition~\ref{ground}(2). Then we apply the quadratic transformation, as described in Section~\ref{Half-integer rank}.
We~obtain
an even equation in $\sqrt{z}$ of the rank $2m$, with $\tilde p_{-2m}=0$ and $\tilde q_{-4m}\neq0$.
We~already know that it has a solution of the form described in 1:
\begin{gather}\label{qqq1}
\exp\Bigg(\sum_{k=-2m+1}^{-1}\frac{\big(\sqrt{z}\big)^k}{k}\tilde w_{i,k}\Bigg)
\sum_{j=0}^\infty \tilde v_{j}\big(\sqrt{z}\big)^{\tilde w_{i,0}+j}.
\end{gather}
Using $2m\geq2$ and~(\ref{qqq}) we obtain
\begin{gather}
\tilde w_{-2m+1}=\sqrt{-\tilde q_{-4m}}\neq0.\label{qqq2}
\end{gather}

Clearly,~(\ref{qqq1}) with $\sqrt{z}$ replaced by $-\sqrt{z}$ is also a solution. By~(\ref{qqq2}) both solutions are not proportional to one another. This proves~(\ref{thome1}).
\end{proof}

Note that Proposition~\ref{thome} is also true in the Fuchsian case,
except that for $\arank(A,z_0)=1$, one has to make an obvious modification
in the logarithmic case, and for $\arank(A,z_0)=\frac12$~(\ref{thome1}) does not have to be true.

In~the Fuchsian case $w_{i,0}$, $i=1,2$, coincide with the indices of $z_0$.
In~what follows, the numbers $w_{i,0}$, $i=1,2$, will be called {\em indices of $z_0$} in the general case as well. We~also introduce the alternative notation
\begin{gather}
\rho_{z_0,i}:=w_{i,0},\qquad i=1,2.\label{rho6}
\end{gather}

\begin{Proposition} \label{fuchsi}
 Let $z_0$ be a singularity of $A$.
 \begin{enumerate}\itemsep=0pt
 \item[$1.$] We~have
 \begin{gather}
\label{rho1}
\rho_{z_0,1}+\rho_{z_0,2}=-p_{z_0,-1}+\arank(A,z_0),\qquad z_0\in\cc,
\\[1ex]
\label{rho2}
\rho_{\infty,1}+\rho_{\infty,2}=p_{\infty,-1}-2+\arank(A,\infty),\qquad z_0=\infty.
 \end{gather}
 \item[$2.$] If $\arank(A,z_0)\in\big\{\frac32,\frac52,\dots\big\}$, then
 \begin{gather*}
\rho_{z_0,1}=\rho_{z_0,2}=\frac12\big({-}p_{z_0,-1}+\arank(A,z_0)\big),\qquad z_0\in\cc,
\\[1ex]
\rho_{\infty,1}=\rho_{\infty,2}=\frac12\big(p_{\infty,-1}-2+\arank(A,\infty)\big),\qquad z_0=\infty.
 \end{gather*}
\item[$3.$] If $z_0\in\cc$ is grounded, then
 \begin{gather*}
 \{\rho_{z_0,1},\rho_{z_0,2}\}=\big\{0,-p_{z_0,-1}+\arank(A,z_0)\big\} .
 \end{gather*}
 \end{enumerate}
\end{Proposition}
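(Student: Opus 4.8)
The plan is to read off the two indices $\rho_{z_0,1},\rho_{z_0,2}$ from Abel's identity for the Wronskian of two linearly independent Thom\'e solutions of $A$, after first normalizing $A$ near $z_0$. I would set aside the Fuchsian cases $\arank(A,z_0)\le1$, which are classical, and assume $\arank(A,z_0)\ge\frac32$. Next I would observe that each of the three identities is invariant under sandwiching by $\e^{r}$ with $r$ as in \eqref{tra1.} or \eqref{tra2.}: such a transformation preserves $\arank(A,z_0)$, and its effect on the indices $\rho_{z_0,i}$ and on $p_{z_0,-1}$ (read off from \eqref{pde-tra1}) cancels between the two sides of \eqref{rho1}, \eqref{rho2} and of Parts~2 and~3. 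By Proposition~\ref{ground} I may therefore assume that $A$ is grounded at $z_0$ with $\rank(A,z_0)=\arank(A,z_0)=\deg(p,z_0)$ when $\arank(A,z_0)$ is an integer, and with $\rank(A,z_0)=\arank(A,z_0)$ when it is a half-integer. Throughout I take $z_0=0$; the case $z_0=\infty$ is handled identically with the $\infty$-versions of all definitions.

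For the integer case, put $m:=\arank(A,0)=\deg(p,0)\ge2$. Then $\deg(q,0)\le m$, so $q_{-2m}=0$, while $p_{-m}\ne0$ by definition of degree. By Proposition~\ref{series1}(3) and Proposition~\ref{thome}(1), one Thom\'e solution $u_1$ is an honest power series with $u_1(0)\ne0$, so $\rho_{0,1}=0$; the other is $u_2=\exp(S_2(z))\,z^{\rho_{0,2}}h_2(z)$ with $S_2(z)=\sum_{k=-m+1}^{-1}\frac{z^k}{k}w_{2,k}$, $h_2(0)\ne0$, and with leading exponential coefficient $w_{2,-m+1}=-p_{-m}\ne0$: indeed \eqref{qqq} here reads $w^2+p_{-m}w=w(w+p_{-m})=0$, whose roots are $0$ and $-p_{-m}$, the root $0$ being realized by $u_1$. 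Now I compute $W:=u_1u_2'-u_1'u_2$ two ways. First, Abel's identity $W'=-pW$ (valid formally) gives $W=C_1\,z^{-p_{0,-1}}\exp\big(P(z)\big)(1+O(z))$ with $C_1\ne0$, where $P(z):=-\sum_{k=-m}^{-2}\frac{p_k}{k+1}z^{k+1}$ is the principal part of $-\!\int p$. Second, using $u_1=h_1$ with $h_1(0)\ne0$, a short computation gives $W=\exp(S_2)\,z^{\rho_{0,2}}\big[h_1h_2S_2'+\rho_{0,2}z^{-1}h_1h_2+h_1h_2'-h_1'h_2\big]$; the bracket has a pole of order exactly $m$ at $0$ with leading coefficient $w_{2,-m+1}h_1(0)h_2(0)\ne0$ (here $m\ge2$ keeps the $z^{-1}$ term from interfering), so $W=C_2\exp(S_2)\,z^{\rho_{0,2}-m}(1+O(z))$ with $C_2\ne0$. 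Equating the two expressions, $\exp(S_2-P)=(C_1/C_2)\,z^{-p_{0,-1}-\rho_{0,2}+m}(1+O(z))$. The left-hand side is the exponential of a Laurent polynomial in $z^{-1}$ with no constant term; were that polynomial nonzero, the left-hand side would contain terms of unboundedly negative degree, which the right-hand side cannot. Hence $S_2=P$ and $-p_{0,-1}-\rho_{0,2}+m=0$, i.e.\ $\rho_{0,2}=-p_{0,-1}+\arank(A,0)$, proving \eqref{rho1}. Since then $\{\rho_{0,1},\rho_{0,2}\}=\{0,\,-p_{0,-1}+\arank(A,0)\}$, and since every $A$ grounded at $0$ with $\arank(A,0)\ge2$ likewise admits a power-series solution by Proposition~\ref{series1}(3), Part~3 follows too.

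For the half-integer case, write $\arank(A,0)=\rank(A,0)=m+\frac12$ with $m\ge1$ and apply the substitution $z=y^2$ of Section~\ref{Half-integer rank}. The resulting principal-form equation $\tilde A$ in $y$ is invariant under $y\mapsto-y$, has $\rank(\tilde A,0)=2m$, $\tilde p_{-2m}=0$ and $\tilde q_{-4m}\ne0$; since then $\tilde p_{-2m}^2\ne4\tilde q_{-4m}$, the reduction of Proposition~\ref{ground} reaches a grounded form by Transformation~II alone, with no rank-lowering step, so $\arank(\tilde A,0)=2m$. Its residue is $\tilde p_{0,-1}=2p_{0,-1}-1$. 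By the integer case just proved, $\rho^{(y)}_{1}+\rho^{(y)}_{2}=-(2p_{0,-1}-1)+2m$; and the two Thom\'e solutions of $\tilde A$ have leading exponential coefficients $\pm\sqrt{-\tilde q_{-4m}}$, hence are interchanged by $y\mapsto-y$, so $\rho^{(y)}_{1}=\rho^{(y)}_{2}$ (equivalently, this is Proposition~\ref{thome}(2) evaluated at $k=0$). Passing back through $y=\sqrt z$ halves the indices, so $\rho_{0,1}=\rho_{0,2}=\frac12\big(-p_{0,-1}+m+\frac12\big)=\frac12\big(-p_{0,-1}+\arank(A,0)\big)$, which is Part~2, and \eqref{rho1} on summing.

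The step I expect to be the main obstacle is the comparison of the two formulas for $W$: one must argue rigorously that the exponential of a nonzero Laurent polynomial in $z^{-1}$ (here $S_2-P$) cannot equal a formal power series times a (possibly non-integral) power of $z$, which is what forces $S_2=P$ and then pins down $\rho_{0,2}$. Closely tied to this is the need to identify the leading exponential coefficient $w_{2,-m+1}$ with $-p_{-m}$: this uses the grounded normalization (which kills $q_{-2m}$) together with $p_{-m}\ne0$, for otherwise the bracket in $W$ could have a pole of order below $m$ and the degree count would break; and, in the half-integer case, one must similarly be sure that $\arank(\tilde A,0)=2m$ exactly, not less.
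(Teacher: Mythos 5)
Your proof is correct and shares the paper's outer skeleton---reduce to $z_0=0$ and to a grounded form using the invariance of \eqref{rho1} under sandwiching, settle the integer case first, then obtain the half-integer case and Part~2 from the quadratic substitution $z=y^2$ with $\tilde p_{-1}=2p_{-1}-1$ and the halving of indices, and deduce Part~3 from Part~1 together with Proposition~\ref{series1}(3)---but the decisive computation in the integer case is carried out by a genuinely different device. The paper determines the second index by solving the first $m$ recurrence relations \eqref{recu} explicitly (obtaining $w_{k+1}=-p_k$ for $k=-m,\dots,-2$ and $w_0=-p_{-1}+m$), i.e.\ by exhibiting the second transformation of Proposition~\ref{ground}(1); you instead compare Abel's identity $W'=-pW$ with the directly computed Wronskian of the power-series solution and the second Thom\'e solution, which pins down the exponential part ($S_2=P$) and $\rho_{0,2}=-p_{0,-1}+m$ simultaneously. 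Your route avoids solving the recurrences, at the price of the formal-algebra lemma you flag yourself: that the exponential of a nonzero Laurent polynomial with only negative powers cannot equal $z^c$ times a unit formal power series. This is closed in one line by taking logarithmic derivatives (the left-hand side would have a pole of order at least $2$, the right-hand side at most a simple pole), so the gap you anticipate is not substantive. Two further remarks: (i) at the point where the half-integer case invokes the integer case for $\tilde A$, you are in fact more careful than the paper, which silently replaces $\arank\big(\tilde A,0\big)$ by $\rank\big(\tilde A,0\big)=2m$ in \eqref{rho1}; your observation that $\tilde p_{-2m}=0$ and $\tilde q_{-4m}\neq0$ force Transformation~II to apply immediately, so that $\arank\big(\tilde A,0\big)=2m$, supplies exactly the missing justification. (ii) Like the paper, you set aside the borderline Fuchsian case $\arank(A,z_0)=\frac12$; note that there the indices are $0,\frac12$ while the right-hand side of \eqref{rho1} equals $0$, so neither your ``classical'' reduction nor the paper's own argument covers that case as literally stated---a defect of the statement shared with the paper, not one peculiar to your proof.
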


\begin{proof} (1) Without loss of generality we can assume that $z_0=0$.
When we apply sandwiching with $\e^r$, where $r(z)$ has the
form~(\ref{tra1.}),
then $p_{-1}$ and $\rho_i$ are transformed into $p_{-1}-2w_0$ and~\mbox{$\rho_i+w_0$}. This does not affect the identity~(\ref{rho1}).

{\samepage
Assume first that $\arank(A,0)$ is an integer. Then by a sandwiching transformation we can reduce the equation to a grounded form at $0$.
The first $m$ recurrence relations of~(\ref{recu}) read then
\begin{gather*}
0 = \sum_{k=-m}^{n+m} w_{k+1}(w_{n-k+1}+p_{n-k}),\qquad n=-2m,\dots,-m-2,
 \\
 0 = \sum_{k=-m}^{-1} w_{k+1}(w_{-m-k}+p_{-m-1-k})-mw_{-m+1}.
\end{gather*}}
Apart from the solution $w_{k+1}=0$, $k=-m,\dots,-1$, this is solved by
\begin{gather*}
 w_{k+1}:=-p_k,\qquad k=-m,\dots,-2,
 \\
 w_{0}:=-p_{-1}+m.
 \end{gather*}
Thus $\{\rho_1,\rho_2\}=\{0,-p_{-1}+m\}$. Hence~(\ref{rho1}) is satisfied.

 Assume next that $\arank(A,0)$ is a half integer equal $m+\frac12$.
 After an appropriate sandwiching transformation we can assume that
 $\arank(A,0)= \rank(A,0)$. Then we can apply the quadratic transformation~(\ref{quadra}) obtaining an equation
 \begin{gather*}
 \tilde A=\partial_y^2+\tilde p(y)\partial_y+\tilde q(y).
 \end{gather*}
 Let $\tilde\rho_i$, $i=1,2$ be the indices of $\tilde A$ at zero.
 Now $\rank\big(\tilde A,0\big)=2m$, which is an integer. Hence we can apply the formula~(\ref{rho1})
 \begin{gather*}
 \tilde\rho_1+\tilde\rho_2=-\tilde p_{-1}+2m.
 \end{gather*}
 But the indices of $A$ at $0$ are $\rho_i=\frac{\tilde\rho_i}{2}$, $i=1,2$ and $\tilde p_{-1}=2p_{-1}-1$. Hence
 \begin{gather*}
 \rho_1+\rho_2=\frac12( \tilde \rho_1+\tilde\rho_2) = \frac12(-2p_{-1}+1+2m)=-p_{-1}+m+\frac12.
 \end{gather*}
 This ends the proof~(\ref{rho1}).

 To prove~(\ref{rho2}) we apply the transformation $w=\frac1z$. We~note that $\arank(A,\infty)$ is transformed to~$\arank(A,0)$ and $\rho_{\infty,i}$ are transformed to $\rho_{0,i}$, $i=1,2$. Besides, $p_{\infty,-1}$
 is transformed to $-p_{0,-1}+2$.

 (1) and~(\ref{thome1}) imply (2).

 (1) and~(\ref{item}) of Proposition~\ref{series1} imply (3).
 \end{proof}

\subsection{Nonlogarithmic singularities}

Let $z_0$ be a singular point of the equation~(\ref{pde}). We~say that $z_0$
is {\em nonlogarithmic} or {\em apparent} iff all solutions of the equation are meromorphic around this singularity. If $z_0$ is a nonlogarithmic Fuchsian singular point, then both its indices are integers.

The following proposition shows how to deform a given equation
so that one obtains an addi\-tional nonlogarithmic singularity with indices $0,2$.
This deformation depends on two parame\-ters~$\lambda$, $\mu$: the
additional singularity is located at $\lambda$,
and solutions of the deformed equation satisfy $\mu v(\lambda)=v'(\lambda)$.
It will play the central role in the derivation of Painlev\'e equations from Heun class equations in Section~\ref{section: Painleve}.

\begin{Proposition}\label{appa}
Let $\sigma$, $\tau$, $\eta$ be analytic at $\lambda$ and $\sigma(\lambda)\neq0$. Let $\mu\in\cc$.
Then all solution of the equation given by
\begin{gather}
\sigma(z)\partial_z^2+
\bigg(\tau(z)-\frac{\sigma(z)}{z-\lambda}\bigg)\partial_z
+\eta(z)-\eta(\lambda)-\mu^2\sigma(\lambda)-\mu\big(\tau(\lambda)-\sigma'(\lambda)\big)
+\frac{\mu\sigma(\lambda)}{z-\lambda}\label{h-bis-}
\end{gather}
are analytic at $\lambda$. Thus the equation given by~\eqref{h-bis-}
has a nonlogarithmic singularity at $z=\lambda$. The singularity is Fuchsian with indices $0$, $2$.
\end{Proposition}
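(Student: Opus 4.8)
The plan is to put \eqref{h-bis-} in principal form and then run the Frobenius analysis at $z=\lambda$. Since $\sigma(\lambda)\neq0$, dividing \eqref{h-bis-} by $\sigma(z)$ is legitimate on a neighbourhood of $\lambda$ and produces $\partial_z^2+p(z)\partial_z+q(z)$ with
\[
p(z)=\frac{\tau(z)}{\sigma(z)}-\frac{1}{z-\lambda},\qquad
q(z)=\frac{\eta(z)-C}{\sigma(z)}+\frac{\mu\sigma(\lambda)}{(z-\lambda)\sigma(z)},\qquad
C:=\eta(\lambda)+\mu^2\sigma(\lambda)+\mu\big(\tau(\lambda)-\sigma'(\lambda)\big).
\]
From this one reads off the Laurent coefficients at $\lambda$: $p$ has a simple pole with $p_{\lambda,-1}=-1$ and $p_{\lambda,0}=\tau(\lambda)/\sigma(\lambda)$, while $q$ is the sum of a function holomorphic at $\lambda$ and $\frac{\mu\sigma(\lambda)}{(z-\lambda)\sigma(z)}$, whose Laurent expansion at $\lambda$ is $\frac{\mu}{z-\lambda}+(\text{holomorphic})$; hence $q_{\lambda,-1}=\mu$ and, crucially, $q_{\lambda,-2}=0$. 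Thus $\lambda$ is a singular point, it is Fuchsian because $\deg(q,\lambda)\le1$, and by the indicial equation~\eqref{rho4} its indices are the roots of $\rho(\rho-1)+p_{\lambda,-1}\rho+q_{\lambda,-2}=\rho(\rho-2)$, namely $0$ and $2$.

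Because the two indices differ by the positive integer $2$, the Frobenius method (as in Propositions~\ref{series1} and~\ref{fuchsi}) always yields one genuine solution $(z-\lambda)^2\sum_{j\ge0}a_j(z-\lambda)^j$ with $a_0\neq0$ from the larger index, and a solution attached to the index $0$ of the form $\sum_{j\ge0}b_j(z-\lambda)^j+c\log(z-\lambda)\cdot(\text{the first solution})$; the point is nonlogarithmic precisely when one may take $c=0$, i.e.\ when the recursion for the $b_j$ closes. Substituting $\sum_{j\ge0}b_j(z-\lambda)^j$ into the principal form and comparing coefficients, the coefficient of $b_n$ in the step determining it is the indicial value $n(n-2)$ (the polynomial $\rho(\rho-2)$ at $\rho=n$): thus $b_0$ is free, the step $n=1$ gives $b_1=q_{\lambda,-1}b_0=\mu b_0$, and at the resonance $n=2$ this coefficient vanishes, turning that step into the solvability condition
\[
\big(p_{\lambda,0}+q_{\lambda,-1}\big)b_1+q_{\lambda,0}b_0
=\Big(\big(\tfrac{\tau(\lambda)}{\sigma(\lambda)}+\mu\big)\mu+q_{\lambda,0}\Big)b_0=0 .
\]

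The only genuine computation — and the step I expect to be the main obstacle — is to evaluate $q_{\lambda,0}$ and check that this obstruction vanishes. The holomorphic summand $\frac{\eta(z)-C}{\sigma(z)}$ takes at $\lambda$ the value $\frac{\eta(\lambda)-C}{\sigma(\lambda)}=-\mu^2-\mu\frac{\tau(\lambda)}{\sigma(\lambda)}+\mu\frac{\sigma'(\lambda)}{\sigma(\lambda)}$, while $\frac{\mu\sigma(\lambda)}{\sigma(z)}=\mu-\mu\frac{\sigma'(\lambda)}{\sigma(\lambda)}(z-\lambda)+O\big((z-\lambda)^2\big)$ shows that the holomorphic part of $\frac{\mu\sigma(\lambda)}{(z-\lambda)\sigma(z)}$ has value $-\mu\frac{\sigma'(\lambda)}{\sigma(\lambda)}$ at $\lambda$. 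The two $\sigma'(\lambda)$-contributions cancel — this is precisely the role of the $\sigma'(\lambda)$-term built into the constant $C$ — leaving $q_{\lambda,0}=-\mu^2-\mu\frac{\tau(\lambda)}{\sigma(\lambda)}$. Substituting into the displayed condition gives $\big(\tfrac{\tau(\lambda)}{\sigma(\lambda)}+\mu\big)\mu-\mu^2-\mu\tfrac{\tau(\lambda)}{\sigma(\lambda)}=0$, so the recursion runs, $b_2$ is free, and we obtain a second solution holomorphic at $\lambda$; it is linearly independent of the first because it is nonzero at $\lambda$ whereas the first vanishes there. Hence every solution of \eqref{h-bis-} is holomorphic at $\lambda$, which both shows that $\lambda$ is a nonlogarithmic singularity and — together with the first paragraph — that it is Fuchsian with indices $0$, $2$. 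Apart from the displayed evaluation of $q_{\lambda,0}$, every step is routine Frobenius theory already recorded in the excerpt.
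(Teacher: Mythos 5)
Your proof is correct and follows essentially the same route as the paper: a power-series (Frobenius) analysis at $z=\lambda$ showing that the resonance at the index $2$ is unobstructed, so that $b_0$ and $b_2$ remain free and every solution is analytic there. The only difference is presentational — the paper substitutes the series into the operator without dividing by $\sigma(z)$, which makes the order-one coefficient vanish identically and avoids your explicit computation of $q_{\lambda,0}$, but the content is the same.
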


\begin{proof}
We~look for a solution analytic around $\lambda$:
\begin{gather*}
v(z)=\sum_{n=0}^\infty v_n(z-\lambda)^n.
\end{gather*}
We~obtain
\begin{gather*}
\big({-}\sigma(\lambda)v_1+\mu\sigma(\lambda)v_0\big)(z-\lambda)^0 +\big(\sigma(\lambda)2v_2+\big(\tau(\lambda)-\sigma'(\lambda)\big)v_1-\sigma(\lambda)2v_2 -\big(\mu\tau(\lambda)
\\ \qquad
{}-\mu\sigma'(\lambda)+\mu^2\sigma(\lambda)\big)v_0 +\mu\sigma(\lambda)v_1\big)(z-\lambda)^1
\\ \qquad
{} +\sum_{n=3}^\infty\big(\sigma(\lambda)n(n-2)v_n+\cdots\big)(z-\lambda)^{n-1}=0.
\end{gather*}
We~have $\sigma(\lambda)\neq0$. Hence the first line implies
$v_1=\mu v_0$. Then the second line is identically zero, and $v_2$ is left unspecified. The next terms yield recurrence relations for $v_n$, $n=3,\dots$.
\end{proof}

\section{Equations with rational coefficients}\label{s2}

\subsection[The $M_n$ class and the grounded $M_n$ class]
{The $\boldsymbol{M_n}$ class and the grounded $\boldsymbol{M_n}$ class}

Consider an equation given by the operator
\begin{gather}
A:= \partial_z^2+p(z)\partial_z+q(z),
\label{pde-}
\end{gather}
where $p(z)$, $q(z)$ are rational functions.

If $z_1,\dots,z_k\in\cc\cup\{\infty\}$ are its singularities and their ranks are
$m_1,\dots,m_k$, then we will say that the equation~(\ref{pde-}) is of type $(m_1m_2\cdots m_k)$

 Often we will need a more precise
 description of~(\ref{pde-}), which gives information what is the rank of the singularity at $\infty$. We~will then put it at the end of the sequence, so that $z_k=\infty$, and precede it with a semicolon. We~will write that~(\ref{pde-}) is of type $(m_1m_2\cdots m_{k-1};m_\infty)$.

 By writing $\underline{m_i}$ instead of $m_i$ we will mean
the rounded rank. We~will use it especially often for $1$. Thus $\underline{1}$ means a Fuchsian singularity $\big(1$ or $\frac12\big)$.

Every equation having no more than $n+1$ singular points in the Riemann sphere, all of them Fuchsian
and at most $n$ finite, is given by an operator of the form
\begin{gather}
 \partial_z^2+\sum_{j=1}^{n}\frac{a_j}{z-z_j}\partial_z
 +\sum_{j=1}^{n}\frac{b_j}{z-z_j}
 +\sum_{j=1}^{n}\frac{c_j}{(z-z_j)^2}\label{req1-}
\qquad \text{with}\quad
\sum_{j=1}^{n}b_j=0,
\end{gather}
where $z_1,\dots,z_{n}$ are distinct points in $\cc$.
The family of equations~(\ref{req1-}) will be called {\em the $M_n$ type}.
 The corresponding symbol is
$\big(\!\underset{n-1\text{ times }}{\underline{1}\cdots\underline{1}};\underline{1}\big)$.

Each finite singularity has at least one index equal $0$ if and only if
$c_1=\cdots=c_{n-1}=0.$
Thus such equations are given by operators
\begin{gather}
 \partial_z^2+\sum_{j=1}^{n}\frac{a_j}{z-z_j}\partial_z
 +\sum_{j=1}^{n}\frac{b_j}{z-z_j}
\qquad \text{with}\quad
 \sum_{j=1}^{n}b_j=0.\label{req2}
 \end{gather}
The family of equations given by~(\ref{req2}) will be called
{\em the grounded $M_n$ type}.

\begin{Proposition}
 By sandwiching with powers, as in~\eqref{tra1}, we can always transform an $M_n$ type equation into a grounded $M_n$ type equation.
 \end{Proposition}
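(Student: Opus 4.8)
The plan is to kill the double-pole coefficients $c_1,\dots,c_n$ of the potential one point at a time, by sandwiching with a suitable power of $z-z_j$ as in~\eqref{tra1}. Recall from~\eqref{rho4} that the indices of the $M_n$ type equation~\eqref{req1-} at a finite singularity $z_j$ are the roots of $\lambda(\lambda-1)+a_j\lambda+c_j$; the constant term of this indicial polynomial equals $c_j$, so $0$ is an index at $z_j$ if and only if $c_j=0$. Hence the assertion amounts to saying that sandwiching with powers can bring the equation to a form in which $0$ is an index at every finite singularity.

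Concretely, at each $j$ I would pick one index $\rho_j$ of~\eqref{req1-} at $z_j$ and set $\kappa_j:=-\rho_j$ (with the sign dictated by the index-shift rule recorded after~\eqref{tra1}; if $z_j$ is not a genuine singularity, take $\kappa_j=0$). Then I would apply the composite gauge transformation $u(z)=\prod_{j=1}^n(z-z_j)^{\kappa_j}\,\tilde u(z)$, which is an iteration of transformations of type~\eqref{tra1}, and analyze it locally. Near a fixed $z_j$ the factor $\prod_{k\neq j}(z-z_k)^{\kappa_k}$ is holomorphic and nonvanishing, hence changes neither the rank nor the indices there; only $(z-z_j)^{\kappa_j}$ acts, moving one of the two indices at $z_j$ to $0$ and keeping $z_j$ Fuchsian. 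The multiplier is holomorphic and nonvanishing on $\cc\setminus\{z_1,\dots,z_n\}$, so the transformed operator $\tilde A$ acquires no new singularity in $\cc$; and by the facts recorded after~\eqref{tra1}, sandwiching with powers keeps every singularity Fuchsian, including the one at $\infty$.

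It then remains to observe that an operator with rational coefficients whose only singularities are $z_1,\dots,z_n,\infty$, all Fuchsian, and with $0$ an index at each $z_j$, is automatically of the grounded form~\eqref{req2}. Indeed, Fuchsianity at $z_j$ bounds the pole of $\tilde p$ there by $1$ and that of $\tilde q$ by $2$, while $0$ being an index annihilates the order-$2$ part of $\tilde q$, i.e.\ $\tilde c_j=0$; there are no further poles in $\cc$; and Fuchsianity at $\infty$ forces $\deg(\tilde p,\infty)\le-1$ and $\deg(\tilde q,\infty)\le-2$. Therefore $\tilde p=\sum_{j}\tilde a_j/(z-z_j)$ and $\tilde q=\sum_{j}\tilde b_j/(z-z_j)$, and the bound $\deg(\tilde q,\infty)\le-2$ is precisely the constraint $\sum_{j}\tilde b_j=0$; this is~\eqref{req2}.

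The index bookkeeping and the partial-fraction identification are routine. The point deserving attention is that sandwiching at $z_j$ also modifies the residues $b_k$ at the other points (and $b_j$ itself), so one should check that it cannot resurrect a double pole away from $z_j$; it cannot, precisely because $(z-z_j)^{\kappa_j}$ is holomorphic and nonvanishing at each $z_k$ with $k\neq j$. One must also keep the sign convention of the shift rule straight, but since only the existence of suitable $\kappa_j$ is needed, that is harmless.
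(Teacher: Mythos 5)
Your proof is correct and is essentially the argument the paper has in mind: the paper states this proposition without proof, relying on the remark that groundedness is equivalent to $c_j=0$ (i.e.\ index $0$) at each finite singularity together with the index-shift rule recorded after~\eqref{tra1}, which is exactly the mechanism you spell out. Your extra checks (no new poles away from $z_j$, Fuchsianity preserved at $\infty$, and $\sum_j\tilde b_j=0$ coming from $\deg(\tilde q,\infty)\le-2$) are the right details to fill in.
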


We~say that a differential equation belongs to the {\em $M_n$ class} if it is given by
\begin{gather}
 \partial_z^2+\frac{\tau(z)}{\sigma(z)}\partial_z+\frac{\xi(z)}{\sigma(z)^2},
 \label{req3}
 \end{gather}
where $\sigma$, $\tau$, $\xi$ are polynomials satisfying
\begin{gather}\label{req3/}
 \sigma\neq0,\qquad \deg\sigma\leq n,\qquad\deg\tau\leq n-1,\qquad
\deg\xi\leq2n-2.
\end{gather}
We~will often use the shorthand
\begin{gather*}
 \eta(z):=\frac{\xi(z)}{\sigma(z)},\label{short}
 \end{gather*}
where $\eta$ does not have to be a polynomial.

We~say that a differential equation belongs to the {\em grounded $M_n$ class} if it is given by
\begin{gather}
\partial_z^2+\frac{\tau(z)}{\sigma(z)}\partial_z+\frac{\eta(z)}{\sigma(z)},
\label{req3-}
\end{gather}
where $\sigma$, $\tau$, $\eta$ are polynomials satisfying
\begin{gather*}
\sigma\neq0,\qquad\deg\sigma\leq n,\qquad\deg\tau\leq n-1,\qquad
\deg\eta\leq n-2.
\end{gather*}

The name ``the $M_n$ class'' is borrowed from Lay--Slavyanov \cite{Sl_Lay}.

\begin{Proposition}\qquad
\begin{enumerate}\itemsep=0pt
\item[$1.$]
The $M_n$ type is contained in the $M_n$ class. An equation of the $M_n$ class is of the $M_n$ type iff $\sigma$ possesses $n$ distinct roots.
\item[$2.$]
The grounded $M_n$ type is contained in the grounded~$M_n$ class. An equation of the grounded $M_n$ class is of the grounded $M_n$ type iff $\sigma$ possesses $n$ distinct roots.
\end{enumerate} \end{Proposition}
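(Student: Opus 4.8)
The plan is to pass back and forth between the partial-fraction form of a rational coefficient and its common-denominator form, and to observe that the normalization $\sum_j b_j=0$ built into \eqref{req1-} and \eqref{req2} is exactly the statement that the numerator, written over $\sigma^2$ (respectively over $\sigma$), has degree one below the naive bound.

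\emph{From type to class.} Take an equation of the $M_n$ type, written as in \eqref{req1-}, and set $\sigma(z):=\prod_{j=1}^n(z-z_j)$, a polynomial of degree exactly $n$ with $n$ distinct roots. Then $p=\tau/\sigma$ with $\tau(z)=\sum_j a_j\prod_{k\neq j}(z-z_k)$, so $\deg\tau\leq n-1$. Clearing denominators, $\sigma(z)^2q(z)=\sum_j b_j(z-z_j)\prod_{k\neq j}(z-z_k)^2+\sum_j c_j\prod_{k\neq j}(z-z_k)^2$; the second sum has degree $\leq 2n-2$ term by term, while the first is a sum of polynomials of degree $2n-1$ whose $z^{2n-1}$-coefficients sum to $\sum_j b_j=0$, so it too has degree $\leq 2n-2$. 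Hence $\xi:=\sigma^2q$ is a polynomial with $\deg\xi\leq 2n-2$, the conditions \eqref{req3/} hold, and the equation is of the $M_n$ class via a representation whose $\sigma$ has $n$ distinct roots; this also gives the ``only if'' of the second assertion. In the grounded case the same computation with all $c_j=0$ shows that $\sigma q=\sum_j b_j\prod_{k\neq j}(z-z_k)$, a sum of polynomials of degree $n-1$ with leading coefficients summing to $\sum_j b_j=0$, has degree $\leq n-2$, which is exactly the bound in \eqref{req3-}.

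\emph{From class to type.} Conversely, suppose an $M_n$-class equation is presented with a $\sigma$ possessing $n$ distinct roots; since $\deg\sigma\leq n$ this forces $\deg\sigma=n$, so $\sigma=c\prod_{j=1}^n(z-z_j)$ with $c\neq0$ and the $z_j$ pairwise distinct. Because $\deg\tau\leq n-1<\deg\sigma$ and $\deg\xi\leq 2n-2<\deg\sigma^2$, neither $p=\tau/\sigma$ nor $q=\xi/\sigma^2$ has a polynomial part, so their partial-fraction expansions read $p=\sum_j a_j/(z-z_j)$ and $q=\sum_j b_j/(z-z_j)+\sum_j c_j/(z-z_j)^2$, which is the shape \eqref{req1-} apart from the constraint. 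But $\deg\xi\leq 2n-2$ makes $q(z)=O(z^{-2})$ as $z\to\infty$, whereas the expansion gives $q(z)=\big(\sum_j b_j\big)z^{-1}+O(z^{-2})$; comparing, $\sum_j b_j=0$, so the equation is of the $M_n$ type. For the grounded class the argument is identical: $q=\eta/\sigma$ has only simple poles because $\sigma$ has simple roots, so $q=\sum_j b_j/(z-z_j)$, and $\deg\eta\leq n-2$ again forces $q(z)=O(z^{-2})$ and hence $\sum_j b_j=0$, which is \eqref{req2}.

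\emph{The main point to get right.} There is no serious obstacle; the one thing to state cleanly is the dictionary between the balance condition $\sum_j b_j=0$ in the ``type'' description and the one-unit degree drop in the ``class'' description ($2n-2$ rather than $2n-1$ for $\xi$, $n-2$ rather than $n-1$ for $\eta$), the rest being partial-fraction bookkeeping plus the triviality that $\deg\sigma\leq n$ together with $n$ distinct roots forces $\deg\sigma=n$. It is also worth remarking that ``$\sigma$ possesses $n$ distinct roots'' must be read as ``the equation admits a representation \eqref{req3} with such a $\sigma$'', since an $M_n$-type equation whose genuine singularities number fewer than $n$ also has representations with $\sigma$ of smaller degree.
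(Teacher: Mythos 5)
Your argument is correct and follows essentially the same route as the paper's (very brief) proof: pass from the partial-fraction form \eqref{req1-}/\eqref{req2} to the common-denominator form \eqref{req3}/\eqref{req3-} with $\sigma(z)=\prod_j(z-z_j)$ and back by partial fractions. You simply make explicit the bookkeeping the paper leaves to the reader, in particular that the constraint $\sum_j b_j=0$ is equivalent to the one-unit degree drop $\deg\xi\leq 2n-2$ (resp.\ $\deg\eta\leq n-2$).
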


\begin{proof} Let us prove (1).
 Consider~(\ref{req1-}). Set \begin{gather*}\sigma(z):=(z-z_1)\cdots(z-z_{n}).\end{gather*}
 Then clearly $\sigma$ is a nonzero polynomial with $n$ distinct roots. We~easily see that~(\ref{req1-}) can be rewritten as~(\ref{req3}) with~(\ref{req3/}) satisfied.

 Conversely, consider~(\ref{req3}) such that $\sigma$ has $n$ distinct roots, namely, $z_1,\dots,z_{n}$. Then we can decompose $\frac{\tau(z)}{\sigma(z)}$ and $\frac{\xi(z)}{\sigma(z)^2}$ into simple fractions, obtaining~(\ref{req1-}).

 The proof of (2) is analogous.
 \end{proof}

 \begin{Proposition}
 Let $(z_1,\dots,z_k)$ be the singularities of an equation $A$ of the $M_n$ class. Then
 \begin{gather*}
 \lceil\rank\rceil(A,z_1)+\cdots+\lceil\rank\rceil(A,z_k)
 \leq n+1.
 \end{gather*}
\end{Proposition}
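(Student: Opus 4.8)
The plan is to separate the left-hand side into the contribution of the finite singularities and that of the singularity at $\infty$, bound the former by $\deg\sigma$ and the latter by $n+1-\deg\sigma$, and then add. First I would observe that every finite singularity $z_0$ of $A$ is a zero of $\sigma$: if $\sigma(z_0)\neq0$, then $p=\tau/\sigma$ and $q=\xi/\sigma^2$ are both analytic at $z_0$, so $\deg(p,z_0)\le0$ and $\deg(q,z_0)\le0$, i.e., $z_0$ is a regular point. So the finite singularities lie among the roots of $\sigma$; let $\mu_0\ge1$ denote the multiplicity of such a root $z_0$. Since $\tau$ and $\xi$ are polynomials while $\sigma$ vanishes to order $\mu_0$ (resp.\ $\sigma^2$ to order $2\mu_0$) at $z_0$, we have $\deg(p,z_0)\le\mu_0$ and $\deg(q,z_0)\le2\mu_0$, hence by the equivalent formula for the rounded rank, $\lceil\rank\rceil(A,z_0)=\lceil\max\{\deg(p,z_0),\frac12\deg(q,z_0),0\}\rceil\le\lceil\mu_0\rceil=\mu_0$. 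Summing over the distinct finite singularities and using that the multiplicities of distinct roots of $\sigma$ add up to at most $\deg\sigma$ gives $\sum_{z_i\in\cc}\lceil\rank\rceil(A,z_i)\le\deg\sigma$.

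Next I would handle $\infty$. Put $d:=\deg\sigma\le n$. Expanding at $\infty$, $\deg(p,\infty)=\deg\tau-d\le n-1-d$ and $\deg(q,\infty)=\deg\xi-2d\le2n-2-2d$ (with the usual convention $\deg0=-\infty$). Plugging into the equivalent formula for the rounded rank at $\infty$, $\lceil\rank\rceil(A,\infty)=\lceil\max\{\deg(p,\infty)+2,\frac12\deg(q,\infty)+2,0\}\rceil\le\lceil\max\{n+1-d,0\}\rceil=n+1-d$, the last equality because $d\le n$ forces $n+1-d\ge1$. (If $\infty$ is a regular point it is simply not among the $z_i$; the estimate below only improves.)

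Adding the two bounds, $\sum_i\lceil\rank\rceil(A,z_i)\le d+(n+1-d)=n+1$, as claimed. I do not anticipate a genuine obstacle: the argument is a degree count. The only point that needs attention is that the finite contribution must be measured against the \emph{actual} degree $d=\deg\sigma$ --- which may be strictly smaller than $n$ --- and the contribution at $\infty$ against the complementary amount $n+1-d$; it is exactly this bookkeeping, together with the $+2$ shift in the definition of the rank at $\infty$, that makes the two partial budgets fit and the inequality sharp.
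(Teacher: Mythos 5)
Your proposal is correct and follows essentially the same route as the paper's proof: bound the sum of rounded ranks at the finite singularities by $\deg\sigma$ (each finite singularity being a root of $\sigma$, with rounded rank at most its multiplicity), bound the rounded rank at $\infty$ by $n+1-\deg\sigma$ using $\deg\tau\leq n-1$ and $\deg\xi\leq 2n-2$, and add. Your write-up merely fills in a few details the paper leaves implicit (that finite singularities must be zeros of $\sigma$, and the explicit multiplicity estimate), so there is nothing to correct.
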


 \begin{proof} Without loss of generality we can assume that $z_1,\dots,z_{k-1}\in\cc$ and $z_k=\infty$. Let \begin{gather*}\sigma(z)=(z-z_1)^{m_1}\cdots(z-z_{k-1})^{m_{k-1}}\end{gather*} with distinct $z_i$'s. Then $z_1,\dots,z_{k-1}$ are the finite singular points.
 Clearly, $\lceil\rank\rceil(A,z_i)\leq m_i$.
 Therefore,
 \begin{gather}\label{we1}
 \lceil\rank\rceil(A,z_1)+\cdots + \lceil\rank\rceil(A,z_{k-1})\leq\deg\sigma.
 \end{gather}
 Now
\begin{gather}\nonumber
\lceil\rank\rceil(A,\infty)=\bigg\lceil\!\max\bigg(\deg\tau-\deg\sigma+2, \frac12(\deg\xi-2\deg\sigma)+2\bigg)\bigg\rceil
\\ \hphantom{\lceil\rank\rceil(A,\infty)}
{}\leq \bigg\lceil\!\max\bigg(n-1-\deg\sigma+2,\frac12(2n-2-2\deg\sigma)+2\bigg)\!\bigg\rceil= n+1-\deg\sigma.
\label{we2}
 \end{gather}
Then we sum~(\ref{we1}) and~(\ref{we2}).
 \end{proof}

We~will often represent $M_n$ class equations by operators obtained by multiplying~(\ref{req3}) or~(\ref{req3-}) from the right by $\sigma(z)$:
\begin{gather}
 \sigma(z) \partial_z^2+\tau(z)\partial_z+\eta(z).
 \label{req3a}
 \end{gather}

Obviously, $M_n$ class equations and operators are defined by coefficients of the polynomi\-als~$\sigma$, $\tau$, $\xi$. Therefore, they form a complex manifold parameterized by
\begin{gather}\label{opendense1}
 \big(\cc^{n}\backslash\{0\}\big)\times\cc^{3n-3}.\end{gather}
The condition saying that $\sigma$ has $n$ distinct roots defines an open dense subset in~(\ref{opendense1}).
Thus the $M_n$ type is an open dense subset of the $M_n$ class.
Hence the $M_n$ class consists of the $M_n$ type and its limiting points in the topology of~(\ref{opendense1}). These limiting points are traditionally called {\em confluent cases}.

Similarly,
 grounded $M_n$ class equations and operators are defined by $\sigma$, $\tau$, $\eta$. Therefore, they form a complex manifold parameterized by
\begin{gather*}
 \big(\cc^{n}\backslash\{0\}\big)\times\cc^{2n-3}.
 \end{gather*}
Clearly,
 the grounded $M_n$ type is an open dense subset of the grounded $M_n$ class.
One can say that the grounded $M_n$ class consists of the grounded $M_n$ type and its confluent cases.

\subsection{Generalized Fuchs relation}

Recall that for any singular point $z_0$ of an equation $A$ we defined
its two indices $\rho_{z_0,1}$ and $\rho_{z_0,2}$. For Fuchsian singularities they were defined in~(\ref{rho4}),~(\ref{rho5}) and for non-Fuchsian singularities in~(\ref{rho6}). If all singularities are regular then the well-known Fuchs relation says that the sum of all indices equals the number of singularities minus $2$. In~the following proposition we describe its generalization which is valid if some of
the singularities are non-Fuchsian.

\begin{Proposition}
 Let $z_1,\dots,z_k$ be the singular points of an equation $A$. Then
 \begin{gather}
 \sum_{j=1}^k(\rho_{z_j,1}+\rho_{z_j,2})=\sum_{j=1}^k\arank(A,z_j)-2.
 \label{rho3}
 \end{gather}
\end{Proposition}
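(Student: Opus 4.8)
The plan is to reduce the generalized Fuchs relation to the classical one via the sandwiching transformations already developed, together with Proposition~\ref{fuchsi}(1). First I would observe that the quantities appearing in~\eqref{rho3} are invariant under sandwiching with $\e^r$ for $r$ of the form~\eqref{tra1.} or~\eqref{tra2.}: indeed, from the discussion after~\eqref{tra1} (and the proof of Proposition~\ref{fuchsi}(1)) such a transformation at $z_0$ sends $p_{z_0,-1}\mapsto p_{z_0,-1}-2w_0$ and $\rho_{z_0,i}\mapsto\rho_{z_0,i}+w_0$, while $\arank$ is by definition unchanged; hence $\rho_{z_0,1}+\rho_{z_0,2}$ and $\arank(A,z_0)$ are each individually preserved. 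Also, such a transformation does not create new singularities nor (by the rank-preservation statements accompanying~\eqref{pde1}--\eqref{pde2}) turn a singular point into a regular one except possibly by lowering rank, which only happens when $\arank$ was already $\le 1$. So, working one singular point at a time, I may sandwich each non-Fuchsian $z_j$ with an appropriate $\e^{r_j}$ (the $r_j$ have disjoint polar parts, so they can be combined into a single global sandwiching) to bring the equation to a form where, at each $z_j$, the rank equals the absolute rank and the grounded form holds — except I must be careful at half-integer ranks, where grounding is still available by Proposition~\ref{ground}(2).

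Next I would handle the half-integer points. If some $\arank(A,z_j)$ is a half-integer, Proposition~\ref{fuchsi}(2) gives $\rho_{z_j,1}+\rho_{z_j,2}=-p_{z_j,-1}+\arank(A,z_j)$ directly (for $z_j\in\cc$; the analogous formula at $\infty$). If $\arank(A,z_j)$ is an integer $\ge 2$, Proposition~\ref{fuchsi}(1), equation~\eqref{rho1}/\eqref{rho2}, gives the same identity. And if $\arank(A,z_j)\le 1$, i.e.\ $z_j$ is Fuchsian, then by Proposition~\ref{fuchsi}(1) again the identity $\rho_{z_j,1}+\rho_{z_j,2}=-p_{z_j,-1}+\arank(A,z_j)$ still holds, with $\arank$ equal to $1$ or $\frac12$ according to the indices. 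So in all cases, for each finite singular point,
\begin{gather*}
\rho_{z_j,1}+\rho_{z_j,2}=-p_{z_j,-1}+\arank(A,z_j),
\end{gather*}
and at $\infty$,
\begin{gather*}
\rho_{\infty,1}+\rho_{\infty,2}=p_{\infty,-1}-2+\arank(A,\infty).
\end{gather*}

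Finally I would sum these identities over $j=1,\dots,k$. Assuming without loss of generality $z_1,\dots,z_{k-1}\in\cc$ and $z_k=\infty$, the right-hand side becomes $\sum_{j=1}^k\arank(A,z_j)-2-\sum_{j=1}^{k-1}p_{z_j,-1}+p_{\infty,-1}$. The point is that $\sum_{j=1}^{k-1}p_{z_j,-1}=p_{\infty,-1}$: this is the partial-fraction/residue identity for $p$, which is a rational function regular away from the $z_j$'s — the sum of the residues of $p$ at its finite poles equals minus the residue at infinity in the usual convention, and with the sign conventions of the $\deg(\cdot,\infty)$ and $p_{\infty,-1}$ definitions used here this becomes exactly $\sum_{j=1}^{k-1}p_{z_j,-1}=p_{\infty,-1}$. (If $p$ happens to be regular at some $z_j$, that point is a singularity only because of $q$, but then $p_{z_j,-1}=0$ and it contributes nothing, consistently.) Substituting, all the $p$-coefficients cancel and we obtain $\sum_{j=1}^k(\rho_{z_j,1}+\rho_{z_j,2})=\sum_{j=1}^k\arank(A,z_j)-2$, as claimed.

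The main obstacle is bookkeeping rather than anything deep: one must check that the sandwiching reductions can be performed simultaneously at all non-Fuchsian points without interfering with each other (disjoint principal parts make this routine), and one must get the sign convention in the residue identity $\sum_j p_{z_j,-1}=p_{\infty,-1}$ exactly right against the nonstandard $\deg(\cdot,\infty)$ convention fixed earlier — the cleanest way is to verify it directly by expanding $p$ in partial fractions and reading off the $1/z$ coefficient of its expansion at infinity. Everything else is an immediate assembly of Proposition~\ref{fuchsi} with the invariance of the two sides under the allowed transformations.
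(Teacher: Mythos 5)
Your proposal is correct and is essentially the paper's own argument: the per-point identity of Proposition~\ref{fuchsi}(1) (finite points and $\infty$), summed together with the residue identity $p_{\infty,-1}=\sum_{j=1}^{k-1}p_{z_j,-1}$. The preparatory sandwiching reduction and the case split by type of $\arank(A,z_j)$ in your first paragraphs are superfluous, since Proposition~\ref{fuchsi}(1) already applies to every singular point without bringing the equation to a grounded form, so the proof collapses to the same three relations the paper sums.
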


 \begin{proof}
 Without loss of generality we can assume that $z_k=\infty$.
We~have
\begin{gather}\label{fu1}
\rho_{z_i,1}+\rho_{z_i,2}=-p_{z_j,-1}+\arank(A,z_j),\qquad j=1,\dots,k-1,
\\
\label{fu2}
\rho_{\infty,1}+\rho_{\infty,2}=p_{\infty,-1}-2+\arank(A,\infty),
\\
p_{\infty,-1}=\sum_{j=1}^{k-1}p_{z_j,-1}\nonumber,
\end{gather}
where~(\ref{fu1}) and~(\ref{fu2}) follows from~Proposition~\ref{fuchsi}.
 Summing up the above three relations we obtain~(\ref{rho3}).
 \end{proof}

\subsection{Riemann class equations}

The simplest nontrivial $M_n$ class is the $M_2$ class.
We~call it the {\em Riemann class} since it consists of the
Riemann equation with one singularity at $\infty$ and its confluent cases.
Thus Riemann class operators have the form~(\ref{req3}), where
\begin{gather*}
\sigma\neq0,\qquad\deg\sigma\leq 2,\qquad\deg\tau\leq 1,\qquad
\deg\xi\leq 2.
\end{gather*}
The {\em grounded Riemann class} operators
has the form~(\ref{req3-}), where
\begin{gather*}
\sigma\neq0,\qquad\deg\sigma\leq 2,\qquad\deg\tau\leq 1,\qquad
\eta\quad\text{is a number.}
\end{gather*}

Note that grounded Riemann class equations appear in the literature very often. They are often called {\em hypergeometric type equations},
see \cite{De,DeWr,NU}.

It is well known that by a division by a constant, transformations $z\mapsto az+b$, sandwiching with powers and exponentials all Riemann class operators can be transformed into one of the following types:
\begin{center}\renewcommand{\arraystretch}{1.2}
{\small\begin{tabular}{lcl}
 \hline
the ${}_2F_1$ operator& $(\underline{1}\underline{1};\underline{1})$&$z(1-z)\p_z^2+\big(c-(a+b+1)z\big)\p_z-ab$
\\ \hline
the ${}_2F_0$ operator& $(2;\underline{1})$&$z^2\p_z^2+\big({-}1+(a+b+1)z\big)\p_z+ab$
\\ \hline
the ${}_1F_1$ operator& $(\underline{1};2)$&$z\p_z^2+(c-z)\p_z-a$
\\ \hline
the ${}_0F_1$ operator& $(\underline{1};\frac32)$&$z\p_z^2+c\p_z-1$
\\ \hline
the Hermite operator& $ (;3)$&$\p_z^2-2z\p_z-2a$
\\ \hline
the Airy operator &$ (;\frac52)$&$\p_z^2+z$
\\ \hline
the Euler II operator & $(\underline{1};\underline{1})$&$z^2\p_z^2+cz\p_z$
\\ \hline
the Euler I operator & $(\underline{1};\underline{1})$&$z\p_z^2+c\p_z$
\\ \hline
the 1d Helmholtz operator & $(;2)$&$\p_z^2+1$
\\ \hline
the 1d Laplace operator & $(;\underline{1})$&$\p_z^2$
\\
\hline
\end{tabular}}
\end{center}

Let us make some remarks.
\begin{enumerate}\itemsep=-2pt
\item The Euler II and Euler I operators yield the same equations.
\item The last four equations from the table can be solved in elementary functions.
\item
In~this table, only the Airy equation cannot be brought to the grounded form.
\item
When we take into account the transformation $z\mapsto z^{-1}$, then the
types $(2;\underline{1})$ and $(\underline{1};2)$ are equivalent.
\item There are more relations between various types when we consider more complicated transformations.
 \end{enumerate}

\subsection{Heun class equations}

$M_3$ type equations were studied by Heun in \cite{Heun}. Therefore, it is natural to call the $M_3$ type the {\em Heun type}.
Consequently, the $M_3$ class will be called the {\em Heun class}.
The grounded $M_3$ class will be called the {\em grounded Heun class}.

Our terminology is consistent with \cite{Ronveaux,Sl_Lay}. However, in some publications the name {\em Heun class} is used to denote what we call the {\em grounded Heun class}, see, e.g., \cite{FID}.

We~will represent Heun class equations by Heun class operators. More precisely,
we will say that
\begin{gather}
 \sigma(z) \partial_z^2+\tau(z)\partial_z+\eta(z)
 \label{req3aa}
\end{gather}
is a {\em Heun class operator} if
$\eta(z)=\frac{\xi(z)}{\sigma(z)}$ and $\sigma$, $\tau$, $\xi$ are polynomials such that
\begin{gather}
\sigma\neq0,\qquad\deg\sigma\leq 3,\qquad\deg\tau\leq 2,\qquad
\deg\xi\leq 4.
\label{con-heun}
\end{gather}
(\ref{req3aa}) is a {\em grounded Heun class operator}
if $\sigma$, $\tau$, $\eta$ are polynomials such that
\begin{gather*}
\sigma\neq0,\qquad\deg\sigma\leq 3,\qquad\deg\tau\leq 2,\qquad
\deg\eta\leq1.
\end{gather*}
If in addition $\sigma$ has 3 distinct roots, then~(\ref{req3aa}) is a (grounded) Heun type operator.

Clearly, the Heun class and the grounded Heun class are preserved
by transformations $z\mapsto az+b$.

The Heun class is also preserved by sandwiching with powers and exponentials, see
(\ref{tra1}),~(\ref{pde1}) and~(\ref{pde2}).

Heun class operators are invariant with respect to swapping a finite singularity with the infinity.
More precisely, Heun class operators of the form
(\ref{req3aa})
after the transformations
\begin{gather*}
 w=(z-z_0)^{-1},\text{ where $z_0$ is one of finite singular points}
 \end{gather*}
remain in the Heun class.
Indeed, without loss of generality, we can suppose that $z_0=0$. Thus $\sigma(0)=0$, so that $\sigma(z)=z\rho(z)$, where $\rho$ is a polynomial with $\deg\rho\leq2$.
Substitute $w=z^{-1}$, which transforms~(\ref{req3aa}) into
\begin{gather}\label{swap}
 w^3\rho\big(w^{-1}\big)\partial_w^2+w^2\big(2\rho \big(w^{-1}\big)-\tau\big(w^{-1}\big)\big)\partial_w+\eta\big(w^{-1}\big).
\end{gather}
It is easy to see that{\samepage
\bes\begin{gather}
 \tilde\sigma(w):= w^3\rho\big(w^{-1}\big),\\
 \tilde\tau(w):=w^2\big(2\rho \big(w^{-1}\big)-\tau\big(w^{-1}\big)\big),\label{rp2}\\
 \tilde\eta(w):=\eta\big(w^{-1}\big)
\end{gather}\ees
still satisfy the condition~(\ref{con-heun}).}

Note that we do not need to put any prefactor in~(\ref{swap}).
Remarkably, the analogous property does not hold for the $M_n$ classes with $n\neq3$: for them after swapping a finite singularity with~$\infty$ an additional prefactor is needed.

Swapping a finite singularity with $\infty$ is possible also if we want to stay within the grounded Heun class, except that the transformation
$w=(z-z_0)^{-1}$ needs to be followed by sandwiching with a power, that is a transformation~(\ref{tra1}).
Indeed, assume~(\ref{req3a}) is a grounded Heun class operator
and $z_0=0$ is a singularity. Let $\alpha$ satisfy the generalized indicial equation at $z=\infty$:
\begin{gather*}
\frac{\rho''}{2}\alpha(\alpha+1)-\frac{\tau''}{2}\alpha+\eta'=0.
\end{gather*}
Then
\begin{gather}\label{heun9}
w^{-\alpha}\big(\tilde\sigma(w)\partial_w^2+\tilde\tau(w)\partial_w+\tilde\eta(w)\big)w^\alpha
= \tilde\sigma(w)\partial_w^2+\tilde\tau_1(w)\partial_w+\tilde\eta_1(w),
\end{gather}
where
\begin{gather*}
\tilde\tau_1(w):=2(\alpha+1)w^2\rho\big(w^{-1}\big)-w^2\tau\big(w^{-1}\big),
\\
\tilde\eta_1(w):=w\alpha\big((\alpha+1)\rho(0)-\tau(0)\big)+ \alpha\big((\alpha+1)\rho'(0)-\tau'(0)\big)+\eta(0).
 \end{gather*}
Clearly,~(\ref{heun9}) is a grounded Heun class operator.

\subsection{Deformed Heun class equations}

Consider $\sigma$, $\tau$, $\eta$ satisfying
the conditions~(\ref{con-heun}), so that \eqref{req3aa}
is a Heun class operator. Let $\lambda,\mu\in\cc$. The corresponding {\em deformed Heun class operator}
is defined as
\begin{gather}
\sigma(z)\partial_z^2+
\bigg(\tau(z)-\frac{\sigma(z)}{z-\lambda}\bigg)\partial_z
+\eta(z)-\eta(\lambda)-\sigma(\lambda)\mu^2
-\big(\tau(\lambda)-\sigma'(\lambda)\big)\mu
+\frac{\sigma(\lambda)\mu}{z-\lambda}.
\label{h-bis--}
\end{gather}
By Proposition~\ref{appa}, the equation defined by~(\ref{h-bis--}) has a nonlogarithmic singularity at $z=\lambda$ with indices $0,2$. All the remaining finite singularities have the same type (the rank, the indices), as for the original Heun class operator~(\ref{req3aa}).

Thus to every Heun class operator~(\ref{req3aa}) there corresponds a family of deformed Heun class operators~(\ref{h-bis--}) depending on two new parameters: $\lambda$ and $\mu$. Note that one of the parameters of $\eta$ in the original operator~(\ref{h-bis-}) is lost~---~(\ref{h-bis--}) does not depend on the free (zeroth order) term of $\eta$.

The family of deformed Heun class operators is preserved by the same transformations as the family of Heun class operators.
Clearly, it is preserved by $z\mapsto az+b$, division by a constant and sandwiching with powers and exponentials, as described in~(\ref{tra1}),~(\ref{pde1}) and~(\ref{pde2}).

It is also invariant with respect to swapping the singularity at $\infty$ with
finite singularities.
Thus assume that $\sigma(z)=z\rho(z)$.
Then substitution $w=z^{-1}$ transforms~(\ref{h-bis--}) into
\begin{gather*}
\tilde\sigma(w)\partial_w^2+
\bigg(\tilde\tau(w)-\frac{\tilde\sigma(w)}{w-\lambda^{-1}}\bigg)\partial_w
+\tilde\eta(w)-\tilde\eta\big(\lambda^{-1}\big)-\tilde\sigma\big(\lambda^{-1}\big)\mu^2
\\ \hphantom{\tilde\sigma(w)\partial_w^2}
{}-\big(\tilde\tau\big(\lambda^{-1}\big)\tilde\sigma'\big(\lambda^{-1}\big)\big)\mu
+\frac{\tilde\sigma\big(\lambda^{-1}\big)\mu}{w-\lambda^{-1}},
\end{gather*}
where{\samepage
\bes\begin{gather}
\tilde\sigma(w):= w^3\rho\big(w^{-1}\big),
\\
\tilde\tau(w):=w^2\big(3\rho\big(w^{-1}\big)-\tau\big(w^{-1}\big)\big),\label{rr2}
\\
\tilde\eta(w):=\eta\big(w^{-1}\big),
\\
\tilde\mu:=-\lambda^2\mu.
\end{gather}\ees}

Transformation $w=z^{-1}$ transforms a deformed Heun class operator
satisfying $\sigma(0)=0$ into another deformed Heun class operator
satisfying $\tilde\sigma(0)=0$, similarly as for undeformed Heun class operators.
Note however a subtle difference between~(\ref{rp2}) and~(\ref{rr2}).

The following proposition describes mapping properties of the above described
transformation in more detail:

\begin{Proposition}\qquad\label{symmo}
\begin{enumerate}\itemsep=0pt
\item[$1.$]
$\sigma\eta(0)=0 \Leftrightarrow \deg\tilde\sigma\tilde\eta\leq3$.
\item[$2.$]
$\sigma'(0)=0 \Leftrightarrow \deg\tilde\sigma\leq2$.
\item[$3.$]
$\sigma'(0)=0$ and $\tau(0)=0 \Leftrightarrow\deg\tilde\sigma\leq2$ and $\deg\tilde\tau\leq1$.
\item[$4.$]
$\sigma'(0)=0$ and $(\sigma\eta(0)=(\sigma\eta)'(0)=0 \Leftrightarrow
\deg\tilde\sigma\leq2$ and $\deg\tilde\sigma\tilde\eta\leq2$.
\end{enumerate}
\end{Proposition}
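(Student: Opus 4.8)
The plan is to carry out the explicit substitution $w=z^{-1}$ as already set up in equations~(\ref{rr2}) and the surrounding display, and then read off the four equivalences directly from the transformed polynomials $\tilde\sigma(w)=w^3\rho(w^{-1})$, $\tilde\tau(w)=w^2(3\rho(w^{-1})-\tau(w^{-1}))$ and $\tilde\eta(w)=\eta(w^{-1})$. The key observation is that since $\sigma(z)=z\rho(z)$ with $\deg\rho\le2$, we may write $\rho(z)=\rho_0+\rho_1 z+\rho_2 z^2$, $\tau(z)=\tau_0+\tau_1 z+\tau_2 z^2$, and $\sigma\eta(z)=\xi(z)=\xi_0+\xi_1 z+\xi_2 z^2+\xi_3 z^3+\xi_4 z^4$ (using $\eta=\xi/\sigma$ and $\deg\xi\le4$). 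Then each condition on the left and each condition on the right is simply the vanishing of one or two of these coefficients, so the proof reduces to matching coefficients.

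First I would treat part~2: $\tilde\sigma(w)=w^3\rho(w^{-1})=\rho_0 w^3+\rho_1 w^2+\rho_2 w$, so $\deg\tilde\sigma\le2 \Leftrightarrow \rho_0=0 \Leftrightarrow \sigma'(0)=0$ (since $\sigma(z)=z\rho(z)$ gives $\sigma'(0)=\rho(0)=\rho_0$). Next, part~1: $\tilde\sigma\tilde\eta(w)=w^3\rho(w^{-1})\eta(w^{-1})=w^3\xi(w^{-1})/\big(w^{-1}\rho(w^{-1})\big)$; more directly, $\tilde\sigma\tilde\eta(w)=w^3\cdot\sigma\eta(w^{-1})/ (w^{-1}) \cdot$ — cleaner is to note $\tilde\sigma(w)\tilde\eta(w)=w^3\rho(w^{-1})\eta(w^{-1})$ and $\sigma\eta=\xi$ with $\sigma(z)=z\rho(z)$, so $\rho(w^{-1})\eta(w^{-1})=\xi(w^{-1})\big/\big(w^{-1}\big)=w\,\xi(w^{-1})$, whence $\tilde\sigma\tilde\eta(w)=w^4\xi(w^{-1})=\xi_0 w^4+\xi_1 w^3+\xi_2 w^2+\xi_3 w+\xi_4$. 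Thus $\deg\tilde\sigma\tilde\eta\le3 \Leftrightarrow \xi_0=0 \Leftrightarrow \sigma\eta(0)=0$. For part~3, $\tilde\tau(w)=w^2\big(3\rho(w^{-1})-\tau(w^{-1})\big)=(3\rho_0-\tau_0)w^2+(3\rho_1-\tau_1)w+(3\rho_2-\tau_2)$, and its leading coefficient is $3\rho_0-\tau_0=3\sigma'(0)-\tau(0)$; combined with the part~2 condition $\rho_0=0$ this gives $\deg\tilde\tau\le1 \Leftrightarrow \tau_0=0 \Leftrightarrow \tau(0)=0$, which is exactly the claim. Finally, part~4: under $\sigma'(0)=0$ i.e.\ $\xi_0$-independent reasoning, $\deg\tilde\sigma\tilde\eta\le2 \Leftrightarrow \xi_0=\xi_1=0 \Leftrightarrow \sigma\eta(0)=(\sigma\eta)'(0)=0$, combined with $\deg\tilde\sigma\le2$ from part~2.

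I expect the only mild subtlety to be bookkeeping the relation $\rho(z)\eta(z)=\xi(z)/z$ carefully, i.e.\ checking that $\tilde\sigma\tilde\eta$ is indeed the polynomial $w^4\xi(w^{-1})$ rather than something with a spurious $w^{-1}$; this follows because $\sigma(0)=0$ guarantees $z\mid\xi$ is not required — rather $\eta=\xi/\sigma=\xi/(z\rho)$, so $\rho\eta=\xi/z$, which is a genuine Laurent expression, but multiplied by $w^3$ in $\tilde\sigma\tilde\eta=w^3\rho(w^{-1})\eta(w^{-1})=w^3\cdot w\,\xi(w^{-1})=w^4\xi(w^{-1})$, a polynomial of degree $\le4$ in $w$. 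Once this is pinned down, all four equivalences are immediate coefficient comparisons, and the proof is short. I would also remark that parts~1 and~4 together with parts~2 and~3 are exactly the conditions appearing in Subcases Ap, Aq, Bp, Bq of Theorem~\ref{main}, which is presumably why this proposition is isolated here.
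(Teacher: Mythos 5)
Your proof is correct and complete: the paper states Proposition~\ref{symmo} without proof, and your direct coefficient computation based on $\tilde\sigma(w)=w^3\rho\big(w^{-1}\big)$, $\tilde\tau(w)=w^2\big(3\rho\big(w^{-1}\big)-\tau\big(w^{-1}\big)\big)$, $\tilde\sigma\tilde\eta(w)=w^4\xi\big(w^{-1}\big)$ is exactly the straightforward verification that is intended. You also handle the one genuine nuance correctly, namely that in part~3 the leading coefficient of $\tilde\tau$ is $3\rho_0-\tau_0$ rather than $-\tau_0$, so the equivalence for $\tau(0)=0$ only holds jointly with the condition $\rho_0=\sigma'(0)=0$, and likewise that the identity $\tilde\sigma\tilde\eta=w^4\xi\big(w^{-1}\big)$ makes parts~1 and~4 pure statements about $\xi_0$ and $\xi_1$ independent of the other hypotheses.
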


\subsection{Classification of Heun class equations}
\label{Classification of Heun class equations}

In~this subsection we discuss two
classifications of Heun class equations and operators.

The first is based
on the rank of singularities. We~classify half-integer and integer ranks separately, except for the rank 1, where we use, as usual, the rounded rank.
Not counting the types reducible to the Riemann class, which are treated as ``trivial'', it partitions
the Heun class into ten types.

There exists also a coarser classification, which uses rounded singularity ranks. It groups the ten nontrivial types of the Heun class into five {\em supertypes}.

 In~the following list we give both classifications of the Heun class:
\begin{itemize}\itemsep=0pt
\item (standard) Heun or $(\un \un \un \un )$.
\item confluent Heun or $(\un \un \underline2)$.
 \begin{itemize}\itemsep=0pt
 \item non-degenerate confluent Heun or $(\un \un 2)$.
 \item degenerate confluent Heun or $\big(\un \un \frac32\big)$.
 \end{itemize}
\item doubly confluent Heun or $(\underline2\underline2)$.\vspace{-1ex}
 \begin{itemize}\itemsep=-1pt
 \item non-degenerate doubly confluent Heun or $(22)$.
 \item degenerate doubly confluent Heun or $\big(\frac322\big)$.
 \item doubly degenerate doubly confluent Heun or $\big(\frac32\frac32\big)$.\vspace{-1.5ex}
 \end{itemize}
 \item biconfluent Heun or $\big(\un \underline3\big)$.\vspace{-1.5ex}
\begin{itemize}\itemsep=-1pt
\item non-degenerate biconfluent Heun or $(\un 3)$.
 \item degenerate biconfluent Heun $\big(\un \frac52\big)$.\vspace{-1ex}
\end{itemize}
 \item triconfluent Heun $(\underline4)$.\vspace{-1.5ex}
\begin{itemize}\itemsep=-1pt
\item non-degenerate triconfluent Heun $(4)$.
 \item degenerate triconfluent Heun $\big(\frac72\big)$.
\end{itemize}
 \end{itemize}

In~the above list we use names similar to those proposed by \cite{Sl_Lay}.

Some of the types in this list
have two distinct varieties, which are equivalent by swapping a finite singularity with infinity. The variety where the higher rank singularity is put at $\infty$ is sometimes called the {\em natural}. For instance, $(\un \un \underline2)$ has the natural variety
$(\un \un ;\underline2)$ and the alternative variety $(\underline2\un ;\un )$.

For some varieties we give more than one
 normal form~--- they are labelled a) and b).

In~the following theorem we describe normal forms of various types of Heun class operators.
Note that there is some arbitrariness in the choice of a normal form.
We~allow the following transformations:
 $z\mapsto az+b$, division by a constant, sandwiching with powers and exponentials.

\begin{Theorem}\label{heuni}
Each Heun class operator can be transformed
into a Riemann class operator or one of the following normal forms:
\begin{center}\renewcommand{\arraystretch}{1.2}
{\small\begin{tabular}{cclll}
\hline
\multicolumn{1}{c}{type}&\multicolumn{1}{c}{$\sigma(z)$}&\multicolumn{1}{c}{$\tau(z)$}
&\multicolumn{1}{l}{\qquad\quad$\eta(z)$}
\\ \hline
$(\underline{1}\underline{1}\underline{1};\underline{1})$&$z(z-1)(z-t)$&$a_2z^2+a_1z+a_0$
&$\begin{array}{l}a)\ b_1z+b_0\\[-.5ex]b)\ b_0+b_{-1}z^{-1}\end{array}$&$t\neq0,1$
\\ \hline
$(\underline{1}\underline{1};2)$&$z(z-1)$&$a_2z^2+a_1z+a_0$&$\begin{array}{l}a)\
b_1z+b_0\\[-.5ex]b)\ b_0+b_{-1}z^{-1}\end{array}$&$a_2\neq0$
\\
$(2\underline{1};\underline{1}$)&$z^2(z-1)$& $a_2z^2+a_1z+a_0$&$\begin{array}{l}a)\
b_1z+b_0\\[-.5ex] b)\ b_0+b_{-1}z^{-1}\end{array}$& $a_0\neq0$
\\ \hline
$\big(\underline{1}\underline{1};\frac32\big)$&$z(z-1)$& $a_1z+a_0$&\hphantom{a)\ \ }$b_1z+b_0$ &$b_1\neq0$
\\
$\big(\frac32\underline{1};\underline{1}\big)$&$z^2(z-1)$& $a_2z^2+a_1z$&\hphantom{a)\ \ }$b_0+b_{-1}z^{-1}$& $b_{-1}\neq0$
\\ \hline
$(2;2)$&$z^2$& $a_2 z^2+a_1z+a_0$&$\begin{array}{l}a)\
b_1z+b_0\\[-.5ex] b)\ b_0+b_{-1}z^{-1}\end{array}$& $a_2\neq0$, $a_0=c$
\\ \hline
$\big(\frac32;2\big)$&$z^2$& $a_2z^2+ a_1z$&\hphantom{a)\ \ }$b_0+b_{-1}z^{-1}$&$b_{-1}\neq0$, $a_2=c$
\\
$\big(2;\frac32\big)$&$z^2$& $a_1z+a_0$&\hphantom{a)\ \ }$b_1z+b_0$&$b_1\neq0$, $a_0=c$
\\ \hline
$\big(\frac32;\frac32\big)$&$z^2$& $0$&\hphantom{a)\ \ }$b_1z+b_0+b_{-1}z^{-1}$& $b_{-1}\neq0$, $b_1=c$
\\ \hline
$\big(1;\frac32\big)$&$z^2$&$a_1z$&\hphantom{a)\ \ }$b_1z$& $b_1=c$
\\
$\big(\frac32;1\big)$&$z^2$& $a_1z$&\hphantom{a)\ \ }$b_{-1}z^{-1}$&$b_{-1}=c$
\\\hline
$(\underline{1};3)$&$z$& $a_2 z^2+a_1z+a_0$&$\begin{array}{l}a)\
b_1z+b_0\\[-.5ex]b)\ b_0+b_{-1}z^{-1}\end{array}$&$a_2=c$
\\
$(3;\underline{1})$&$z^3$& $a_2z^2+a_1z+a_0$&$\begin{array}{l}a)\
b_1z+b_0\\[-.5ex] b)\ b_0+b_{-1}z^{-1}\end{array}$&$a_0=c$
\\
\hline
$\big(\underline{1};\frac52\big)$&$z$& $a_0$&\hphantom{a)\ \ }$b_2z^2+b_1z+b_0$&$b_2=c$
\\
$\big(\frac52;\underline{1}\big)$&$z^3$& $a_2z^2$&\hphantom{a)\ \ }$b_0+b_{-1}z^{-1}+b_{-2}z^{-2}$&$b_{-2}=c$
\\
\hline
$(;4)$&$1$& $a_2 z^2+a_0$&\hphantom{a)\ \ }$b_1z+b_0$&$a_2=c$
\\ \hline
$\big(;\frac72\big)$&$1$&$0$& \hphantom{a)\ \ }$b_3 z^3+b_1z+b_0$&$b_3=c$
\\
\hline
\end{tabular}}
\end{center}
In~the above table $c$ denotes an arbitrary nonzero constant.
 \end{Theorem}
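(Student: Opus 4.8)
The plan is to prove Theorem~\ref{heuni} by a systematic reduction, organized by the two classifications already introduced. First I would recall the tools we are allowed to use: affine changes $z\mapsto az+b$, division by a nonzero constant, and sandwiching with powers~\eqref{tra1} and exponentials~\eqref{pde1}--\eqref{pde2}, all of which preserve the Heun class by the discussion preceding the theorem. The key structural fact is that a Heun class operator is determined by $(\sigma,\tau,\xi)$ with $\deg\sigma\le 3$, $\deg\tau\le 2$, $\deg\xi\le4$, and that $\sum\lceil\rank\rceil(A,z_j)\le 4$ over all singularities (the Proposition in the ``$M_n$ class'' subsection, specialized to $n=3$). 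So the first step is to enumerate the possible multisets of rounded ranks summing to at most $4$ with each term $\ge 1$, discard those that reduce to the Riemann class (i.e.\ where the total is $\le 3$, which effectively means $\deg\sigma$ can be lowered to $\le2$ after transformations), and match the survivors to the five supertypes $(\un\un\un\un)$, $(\un\un\underline2)$, $(\underline2\underline2)$, $(\un\underline3)$, $(\underline4)$.

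Next, within each supertype I would fix a convenient location for the singularities using the affine freedom. For the standard type $(\un\un\un\un)$ one puts three finite Fuchsian singularities at $0$, $1$, $t$ and the fourth at $\infty$, so $\sigma(z)=z(z-1)(z-t)$; for the confluent types one places the irregular singularity at $\infty$ (the ``natural variety'') or, alternatively, swaps it to a finite point via $w=(z-z_0)^{-1}$, which by the swapping invariance of the Heun class stays in the class. Once $\sigma$ is fixed, I would use sandwiching with powers to ground the operator at the finite Fuchsian singularities — pulling out the factors $(z-z_j)^{\rho_j}$ so that one index at each finite Fuchsian point becomes $0$; this is exactly the content of Proposition~\ref{ground}(1) (integer absolute rank case) together with the fact that $M_n$ type can always be grounded. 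This normalizes the lower-order behavior of $\tau$ and $\xi$ at the finite points. Then I would use sandwiching with exponentials at the irregular singularity (finite or at $\infty$) to kill the top-degree exponential ``half'' of the symbol when the absolute rank is a genuine integer, invoking Transformation II/III from the proof of Proposition~\ref{ground}; for half-integer ranks one instead uses Transformation I to force $\deg\tilde p\le 0$ locally, which is what produces the rows with $\tau=0$ or reduced-degree $\tau$ (the $\frac32,\frac52,\frac72$ rows). After this, a final division by a constant and a final affine rescaling fixes the leading coefficient to a prescribed nonzero $c$ in each row.

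The main obstacle, and the part requiring genuine case-by-case care, is the bookkeeping that distinguishes the integer-rank rows from the half-integer-rank rows and, within a supertype, the non-degenerate from the degenerate subtypes. The rank at the irregular point is governed by $\max\{\deg(p,z_0),\frac12\deg(q,z_0)\}$ (suitably shifted at $\infty$), and whether it is an integer or half-integer depends on a \emph{discriminant-type} condition — concretely, whether $p_{-m}^2=4q_{-2m}$ (the dichotomy between Transformations II and III in Proposition~\ref{ground}). When the discriminant is generic one lands in the integer row and the normal form retains a full $\tau$ (e.g.\ $a_2z^2+a_1z+a_0$); when it degenerates one lands in the half-integer row with a stripped-down $\tau$. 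Tracking this for all ten types, keeping careful count of how many free parameters survive after each transformation so that the listed normal forms are neither over- nor under-determined, and verifying consistency with the Fuchs relation~\eqref{rho3} (sum of indices equals sum of absolute ranks minus $2$) to be sure no singularity was accidentally created or destroyed — this is the laborious heart of the proof. The two alternative forms a) and b) in several rows, corresponding to $\eta$ written as $b_1z+b_0$ versus $b_0+b_{-1}z^{-1}$, come from the freedom of sandwiching with a further power at a Fuchsian point (shifting which index is the grounded one), and I would record both because different forms are convenient for the Painlev\'e derivation in later sections. The ``trivial'' cases — those Heun class operators whose absolute ranks sum to $\le 3$, or which otherwise collapse to $\deg\sigma\le2$ after transformations — are precisely the Riemann class alternative in the statement, so they need no separate normal form.
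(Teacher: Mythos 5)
Your toolkit is the right one, and most of the mechanics you describe coincide with what the paper actually does: grounding at finite Fuchsian points by power sandwiches, killing the top coefficients at the irregular point by exponential sandwiches (where the quadratic $\kappa^2+a_2\kappa+b_2=0$ appears, so that the discriminant condition $p_{-m}^2=4q_{-2m}$ is indeed what separates the integer-rank rows from the half-integer ones), the extra power sandwich that trades $b_1z$ for $b_{-1}z^{-1}$ and produces the a)/b) variants, and a final affine rescaling and constant division to fix $c$. The genuine gap is in your opening triage: you propose to enumerate rounded-rank multisets and discard every configuration with total $\le 3$ as ``precisely the Riemann class alternative''. That equivalence is false at the operator level and is nowhere proved in your plan. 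None of the allowed transformations lowers $\deg\sigma$: sandwiching leaves $\sigma$ untouched, $z\mapsto az+b$ and division by constants preserve its degree, and the swap $w=(z-z_0)^{-1}$ lowers it only when $z_0$ is a multiple root of $\sigma$. Consequently, an operator with $\sigma=z(z-1)(z-t)$, $\tau=2z^2+a_1z+a_0$, $\eta=0$ has three Fuchsian points and a regular point at $\infty$, hence rounded-rank sum $3$, yet it can never be transformed into a Riemann class operator (which requires $\deg\sigma\le2$); it is covered by the first row of the table, which is exactly how the paper handles it (the proof for three distinct roots never inspects $\infty$). Likewise the rows $\big(1;\frac32\big)$ and $\big(\frac32;1\big)$ have rounded-rank sum $3$ and are nevertheless kept as separate normal forms, precisely because at the operator level the reduction to the Riemann class is problematic (see the Remark following the theorem); your plan would either drop these rows or assert an unproved reduction for them.

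The paper avoids this pitfall by organizing the case analysis not by ranks but by data invariant under the admissible transformations, namely $\deg\sigma$ and the multiplicities of its roots (using $z\mapsto z^{-1}$ only to move a multiple root to $\infty$), and it recognizes the Riemann-class degenerations only a posteriori, as special parameter values of the already-normalized forms (${}_2F_1$, ${}_2F_0$, ${}_1F_1$, ${}_0F_1$, Euler, Hermite, Airy, 1d Helmholtz, 1d Laplace). If you replace your a priori rank-count triage by this invariant-based case split --- or prove carefully which configurations are genuinely reducible to Riemann class operators, treating the $\big(1;\frac32\big)$, $\big(\frac32;1\big)$ operators separately --- then the remaining work, which you acknowledge but do not carry out (the explicit case-by-case normalization and parameter count for each of the rows), is exactly the content of the paper's proof and proceeds along the lines you sketch.
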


\begin{proof}
If $\sigma$ has 3 distinct roots, it can be transformed to $z(z-1)(z-t)$, $t\neq0,1$. By sandwiching with powers at each finite singularity we can make one of indices 0.
Then $\eta$ becomes a polynomial and $\deg\eta\leq1$. We~obtain the normal form of $(\underline{1}\underline{1}\underline{1};\underline{1})$.

Let $\sigma$ have degree 2 and 2 distinct roots. It can be transformed to $z(z-1)$. At each finite singularity we can make one of indices 0.
Then $\eta$ becomes a polynomial and $\deg\eta\leq2$. By the transformation $\e^{-\kappa z}\cdot\e^{\kappa z}$ with $\kappa$ solving
\begin{gather*}\kappa^2+a_2\kappa+b_2=0\end{gather*}
we can make $b_2=0$. If $a_2\neq0$ we obtain the normal form of $(\underline{1}\underline{1};2)$.

Assume that $a_2=0$.
If $b_1=0$, we get the ${}_2F_1$ operator, which belongs to the Riemann class.
Otherwise we obtain the normal form of $\big(\underline{1}\underline{1};\frac32\big)$.

Let $\sigma$ have degree 2 and one root. It can be transformed to $z^2$.
We~have
\begin{gather*}\eta(z)=b_2z^2+b_1z+b_0+b_{-1}z^{-1}+b_{-2}z^{-2}.\end{gather*}
By $\e^{-\kappa z}\cdot\e^{\kappa z}$ with $\kappa$ solving
\begin{gather*}\kappa^2+a_2\kappa+b_2=0\end{gather*}
we can kill $b_2$.
By $\e^{\kappa z^{-1}}\cdot\e^{-\kappa z^{-1}}$ with $\kappa$ solving{\samepage
\begin{gather*}\kappa^2+a_0\kappa+b_{-2}=0\end{gather*}
we can kill $b_{-2}$.}

Let $a_0\neq0$. By scaling we can make $a_0=1$.
Then by $z^{-\lambda}\cdot z^{\lambda}$ with $\lambda=-b_{-1}$ we can kill~$b_{-1}$, keeping $a_0=1$.
If $a_2\neq0$,
we obtain the normal form of $(2;2)$.
If $a_2=0$ and $b_1=0$, we obtain~${}_2F_0$ or Euler II type, both of the Riemann class.
 If $a_2=0$ and $b_1\neq0$, we obtain the normal form of $\big(2;\frac32\big)$.

 Let $a_0=0$. If $a_2\neq0$,
 by scaling we can make $a_2=1$ Then by $z^{-\lambda}\cdot z^\lambda$
 with $\lambda=-b_1$ we can kill $b_1$ keeping $a_2=1$. We~obtain the normal form of $\big(\frac32;2\big)$.

 Let $a_0=a_2=0$.
 If $b_1=b_{-1}=0$, the operator is of the Riemann class.
 If $b_{-1}=0$, $b_1\neq0$, then with
 $z^{-\lambda}\cdot z^\lambda$ we kill $b_0$ and we obtain $z\big(z\partial_z^2+a_1\partial_z+b_1\big)$. The operator in brackets can be reduced to the $F_1$ operator. If $b_1=0,$ $b_{-1}\neq0$, we similarly kill $b_0$ obtaining
 $\frac1z\big(z^3\partial_z^2+a_1z^2\partial_z+b_{-1}\big)$. The operator in brackets, after the transformation $z\mapsto\frac1z$ can be transformed to a $F_1$ operator. If $b_{-1},b_1\neq0$
we apply $z^{-\lambda}\cdot z^\lambda$
 with $\lambda=-\frac{a_1}{2}$ to kill $a_1$. We~obtain the normal form of
 $\big(\frac32;\frac32\big)$.

 Let $\sigma$ have degree 1. It can be transformed to $z$.
 One of indices at $0$ can be made $0$.
 Then~$\eta$ becomes a polynomial of degree $\leq3$. By applying
 $\e^{-\kappa z^2}\cdot \e^{\kappa z^2}$ with $\kappa=\frac{\sqrt{-b_3}}{2}$ we can kill $b_3$. If~$a_2\neq0$, applying
 $\e^{-\kappa z}\cdot \e^{\kappa z}$ with $\kappa=-\frac{b_2}{a_2}$ we kill $b_2$. By scaling we can make $a_2=1$ and we obtain the normal form of $(\underline{1};3)$.
 If $a_2=0$ and $b_2\neq0$,
by applying $\e^{-\kappa z}\cdot \e^{\kappa z}$ with $\kappa=-a_1$ we kill $a_1$. If $b_2\neq0$, by scaling we can make $b_2=1$ and
we obtain the normal form of $\big(\underline{1};\frac52\big)$. If~$b_2=0$, by applying
$\e^{-\kappa z}\cdot \e^{\kappa z}$ with $\kappa$ solving
\begin{gather*}
\kappa^2+a_1\kappa+b_1=0
\end{gather*}
we obtain an operator which can degenerate to the ${}_1F_1$, ${}_0F_1$ or Euler I type, all of the Riemann class.

 Let $\sigma$ have degree $0$.
 We~can assume that it is $1$. $\eta$ is a polynomial of degree $4$.

 By applying $\e^{-\kappa z^3}\cdot \e^{\kappa z^3}$ with
$
 9\kappa^2+a_23\kappa+b_4=0
$
 we kill $b_4$.

 Let $a_2\neq0$. By applying $\e^{-\kappa z^2}\cdot \e^{\kappa z^2}$ with
$ \kappa=-\frac{b_3}{2a_2}$
 we kill $b_3$. By applying $\e^{-\kappa z}\cdot \e^{\kappa z}$ with
 $\kappa=-\frac{b_2}{a_2}$
 we kill $b_2$.
After a transformation $z\mapsto az+b$ we can assume that $\tau(z)=z^2+a_0$.
We~obtain the normal form of $(;4)$

Let $a_2=0$. By applying $\e^{-\kappa z^2}\cdot \e^{\kappa z^2}$ with
$ \kappa=-\frac{a_1}{4}$
 we kill $a_1$. By applying
$\e^{-\kappa z}\cdot \e^{\kappa z}$ with
 $\kappa=-\frac{a_0}{2}$
we kill $a_0$. Thus $\tau=0$. If $b_3\neq0$, then
after a transformation $z\mapsto az+b$ we can assume that $\tau(z)=z^3+a_1z+a_0$. We~obtain the normal form of $\big(;\frac72\big)$.
If $b_3=0$, we obtain an operator that can degenerate to the Hermite, Airy, 1d Helmholtz or 1d Laplace type, all of the Riemann class.

In~$(\underline{1}\underline{1};2)$, $(\underline{1};3)$ and $(2;2)$
the transformation
$z^\lambda\cdot z^{-\lambda}$ with $\lambda=-\frac{b_1}{a_2}$ kills
$b_1z$ and produ\-ces~$b_{-1}z^{-1}$. Thus it makes the normal form b)
out of the normal form a).

If $\sigma$ has degree 3 and 2 distinct roots, it can be transformed to $z^2(z-1)$. The transformation $z\mapsto z^{-1}$ leads to $\sigma(z)=z(z-1)$.

If $\sigma$ has degree 3 and only 1 root,
it can be transformed to $z^3$.
Then $z\mapsto z^{-1}$ yields $\sigma(z)=z$.
\end{proof}

\begin{Remark} The operators listed in the table of Theorem~\ref{heuni} as
 $\big(1;\frac32\big)$ and $\big(\frac32;1\big)$ are strictly speaking not of the Riemann class: they are $z$ times an operators of the Riemann class. Hence they
 yield equations of the Riemann class. So they can be considered as ``trivial'' and were ignored in the table at the beginning of the subsection.
 \end{Remark}

\section{From Heun class to Painlev\'e equations}\label{section: Painleve}

\subsection{Method of isomonodromic deformations}
Let us review the theory of isomonodromic deformations of linear second order differential equations following \cite{Iwasaki, Ohyama, Okamoto}. We~shall use a notation similar to~\cite{Ohyama}.

Let $p$, $q$ be rational functions of $z$, depending on some parameters.
Among these parameters we single out a parameter $t$.
We~will write $v'$ for $\frac{\partial}{\partial z}v$ and $\dot v$ for $\frac{\partial}{\partial t}v$.
Consider a family of linear second order differential equations of the form
\begin{gather}\label{eq}
v''(z)+p(z)v'(z)+q(z)v(z)=0.
\end{gather}
 We~assume that when we deform the equation~(\ref{eq}), we can also deform its certain solution $v$ so that the following condition is satisfied:
\begin{gather}\label{eq1}
\dot v=a(z,t)v'+b(z,t)v.
\end{gather}
This essentially means that when we deform the equation, its solutions
``live'' on the same Riemann surface. In~particular, if there are singularities, then one should expect that the monodromy of solutions stays the same.

The compatibility of~(\ref{eq}) and~(\ref{eq1}) imposes a strong condition on the deformation. Indeed, differentiating~(\ref{eq1}) in $z$ we obtain
\begin{gather*}
 \dot v'=(a'-ap+b)v'+(-aq+b')v.
 \end{gather*}
Differentiating~(\ref{eq}) once in $t$ and~(\ref{eq1}) twice in $z$ we get
\begin{gather}
\dot v''= \big({-}\dot p-pa'+ap^2-pb-qa\big)v'+\big(paq-pb'-\dot q-qb)v, \label{eq.2}
\\
\dot v''= \big({-}2qa'-aq'+b''+apq-bq\big)v+\big(a''-2pa'-ap'+2b'-aq+ap^2-bp)v'.\label{eq.1}
\end{gather}
Equating~(\ref{eq.1}) and~(\ref{eq.2}) we obtain
\begin{gather}
\dot p-ap'+2b'-pa'+a''=0, \label{compa2.}
\\
\dot q+pb'-2qa'-aq'+b''=0.\label{compa1.}
\end{gather}

When applying this method to a concrete family of equations one needs to divide its parameters into two categories. The first category should contain all parameters responsible for the monodromy around singular points. For example, the coefficients $p_{-1}$ and $q_{-2}$ of the Laurent series of $p$, resp.~$q$ around singular points.
In~the second category we have parameters that do not influence the monodromy, typically denoted $\mu$, $\lambda$, $t$. The variable $t$ is called the ``time variable''.

\subsection{Isomonodromic deformations in presence of a
 non-logarithmic singularity}

Let
 $\sigma$, $\tau$, $\eta$ be rational functions. (At the moment we do not assume the conditions~(\ref{con-heun}) for the Heun class).
Consider the differential equation given by
\begin{gather}\label{heudef1}
 \partial_z^2+p_0(z)\partial_z+q_0(z) =\partial_z^2+\frac{\tau(z)}{\sigma(z)}\partial_z+
 \frac{\eta(z)}{\sigma(z)}.
\end{gather}
We~assume that $\sigma$, $\tau$, $\eta$ depend on a parameter $t$.
Let $\lambda$, $\mu$ be additional parameters. Following the prescription of~(\ref{h-bis-}), we introduce
the deformed equation corresponding to~(\ref{heudef1}):
\begin{gather}
\partial_z^2+p(\lambda,z)\partial_z+q(\lambda,\mu,z)
=\partial_z^2+\bigg(\frac{\tau(z)}{\sigma(z)}-\frac{1}{z-\lambda}\bigg)\partial_z \nonumber
\\ \qquad
{}+\frac{1}{\sigma(z)}\bigg(\eta(z)-\eta(\lambda)-\mu\big(\tau(\lambda)
-\sigma'(\lambda)\big)-\mu^2\sigma(\lambda) +\frac{\mu\sigma(\lambda)}{(z-\lambda)}\bigg).
\label{heudef}
\end{gather}

 The following theorem is devoted to monodromic
 deformations of \eqref{heudef}. It unifies a large family of cases in
 a single formulation. Unfortunately, this unification has one drawback:
relatively complicated conditions~\eqref{muu1}, \eqref{muu2} and~\eqref{muu3} constraining the choice
 of the time variable~$t$ and the auxiliary polynomial $c$.
 Probably this drawback is impossible to avoid. The main results
of our paper, described in the next subsection, will be corollaries
of Theorem~\ref{main0}.

\begin{Theorem}\label{main0}
 Suppose
that $c(z)=c(t,\lambda,z)$ is a
$t,\lambda$-dependent polynomial of degree $\leq2$.
 Suppose that the following conditions are satisfied:
\begin{gather}
0= \frac{\dot\tau}{\sigma}(z)-\frac{\tau\dot\sigma}{\sigma^2}(z)+ \frac{\frac{c\tau}{\sigma}(z)-\frac{c\tau}{\sigma}(\lambda)-(z-\lambda)\big(\frac{c\tau}{\sigma}\big)'(z)}
{(z-\lambda)^2},
\label{muu1}
\\
0=-\frac{\dot\sigma}{\sigma}(z)\big(\eta(z)-\eta(\lambda)\big)+\dot\eta(z)-\dot\eta(\lambda)\nonumber
\\ \hphantom{0=}
{} +\frac{\eta(z)-\eta(\lambda)}{(z-\lambda)}\bigg(\frac{c\sigma'}{\sigma}(z)-c'(z)\bigg)
-\eta'(\lambda)\bigg(\frac{\sigma'c}{\sigma}(\lambda)-c'(\lambda)\bigg) \nonumber
 \\ \hphantom{0=}
{} + \frac{2c\eta(z)-2c\eta(\lambda)-\big((c\eta)'(z)+(c\eta)'(\lambda)\big)(z-\lambda)} {(z-\lambda)^2},
 \label{muu2}
 \\
 0= \frac{\dot\sigma}{\sigma}(z)- \frac{\dot\sigma}{\sigma}(\lambda)-
 \frac{\frac{c\sigma'}{\sigma}(z)-\frac{c\sigma'}{\sigma}(\lambda)
-\big(\frac{c\sigma'}{\sigma}\big)'(\lambda)(z-\lambda)} {(z-\lambda)}.
\label{muu3}
 \end{gather}
 Define
the {\em compatibility functions}
\begin{gather}
 a(t,\lambda,z):=\frac{c(z)}{z-\lambda},\qquad
 b(t,\lambda,\mu,z)=-\frac{c(\lambda)\mu}{z-\lambda}\label{compa3}.
 \end{gather}
Then the following equations for $\lambda$, $\mu$
 \begin{gather}
 \dot\lambda=2 c(\lambda)\mu-c'(\lambda)+\frac{c\tau}{\sigma}(\lambda),
 \label{fir0}
 \\
 \dot\mu=-\frac{c\eta'}{\sigma}(\lambda)-\mu\bigg(\frac{c\tau'}{\sigma}(\lambda)-
 \frac{c\sigma''}{2\sigma}(\lambda)-\frac{c''}{2}\bigg)-\mu^2\frac{\sigma'c}{\sigma}(\lambda)
 \label{second0}
 \end{gather}
are
equivalent to the compatibility conditions~\eqref{compa1.} and~\eqref{compa2.}.
\end{Theorem}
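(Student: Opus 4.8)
The plan is to verify directly that the compatibility equations \eqref{compa1.} and \eqref{compa2.}, when specialized to the coefficients $p=p(\lambda,z)$ and $q=q(\lambda,\mu,z)$ of \eqref{heudef} and to the compatibility functions $a,b$ of \eqref{compa3}, reduce --- after using the conditions \eqref{muu1}, \eqref{muu2}, \eqref{muu3} --- to precisely the two scalar ODEs \eqref{fir0} and \eqref{second0}. The key structural observation is that $p$ and $q$ depend on $t$ both explicitly (through $\sigma,\tau,\eta$) and implicitly through $\lambda$ and $\mu$; hence $\dot p=\partial_t p+\dot\lambda\,\partial_\lambda p$ and similarly for $\dot q$, with $\dot\mu$ entering $\dot q$. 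Substituting these into \eqref{compa2.} and \eqref{compa1.}, I expect each to split into a part that is identically zero by virtue of one of \eqref{muu1}--\eqref{muu3} (the ``explicit-$t$'' part) plus a part proportional to a simple rational function of $z$ (with a pole at $z=\lambda$) whose vanishing is equivalent to \eqref{fir0}, respectively \eqref{second0}.

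Concretely, I would proceed as follows. First I would record the partial derivatives $\partial_\lambda p$, $\partial_\lambda q$, $\partial_\mu q$ of \eqref{heudef}: since $p(\lambda,z)=\frac{\tau}{\sigma}(z)-\frac{1}{z-\lambda}$ we get $\partial_\lambda p=-\frac{1}{(z-\lambda)^2}$, and $q$ is a more involved expression whose $\lambda$- and $\mu$-derivatives I would compute once and for all, noting carefully the terms arising from $\eta(\lambda)$, $\sigma(\lambda)$, $\tau(\lambda)-\sigma'(\lambda)$ and the $\frac{\mu\sigma(\lambda)}{z-\lambda}$ pole. Next I would compute $a'=\partial_z\frac{c(z)}{z-\lambda}$, $a''$, $b'=\partial_z\!\big(\!-\frac{c(\lambda)\mu}{z-\lambda}\big)$, $b''$, and the combinations $-ap'+2b'-pa'+a''$ and $pb'-2qa'-aq'+b''$ appearing in \eqref{compa2.} and \eqref{compa1.}. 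At this stage I would substitute everything into \eqref{compa2.}, collect terms, and observe that the coefficient of $\dot\lambda$ (coming from $-\dot\lambda\,\partial_\lambda p=\frac{\dot\lambda}{(z-\lambda)^2}$) together with the $z$-independent-in-$\lambda$ remainder forces exactly \eqref{fir0}; the genuinely explicit-$t$ remainder is then \eqref{muu1}, which by hypothesis vanishes. I would do the analogous computation for \eqref{compa1.}: here the coefficient of $\dot\mu$ isolates \eqref{second0}, the coefficient of $\dot\lambda$ is consistent with \eqref{fir0} already obtained, and the explicit remainder is the combination of \eqref{muu2} and \eqref{muu3} (note that \eqref{muu3} is precisely the piece multiplying $\mu$, separated out because $q$ is affine in $\mu$ while \eqref{muu2} collects the $\mu$-independent terms).

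The main obstacle I anticipate is purely organizational rather than conceptual: the expression for $q$ in \eqref{heudef} has a double-pole structure at $z=\lambda$ once one differentiates in $\lambda$, and $a$ and $b$ also carry poles at $z=\lambda$, so the products $aq'$, $ap'$, $pb'$, etc., produce terms with poles up to order three at $z=\lambda$ together with polynomial pieces. One must partial-fraction everything consistently in $z$ and match the principal parts and the regular parts of \eqref{compa1.}, \eqref{compa2.} separately. The reason it works is that $c$ has degree $\le 2$ (so $c(z)/(z-\lambda)$ has a single simple pole plus a polynomial) and $\sigma,\tau,\eta$ were allowed to be arbitrary rational functions, which is exactly the flexibility absorbed into the three conditions \eqref{muu1}--\eqref{muu3}. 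Once the principal parts at $z=\lambda$ are matched one reads off \eqref{fir0} and \eqref{second0}; once the regular parts are matched one is left with exactly \eqref{muu1}, \eqref{muu2}, \eqref{muu3}, establishing the equivalence in both directions.
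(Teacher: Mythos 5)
Your plan is essentially the paper's own proof: substitute \eqref{heudef} and \eqref{compa3} into \eqref{compa2.} and \eqref{compa1.}, expand $\dot p$, $\dot q$ through the implicit dependence on $\lambda$, $\mu$, and decompose by pole order at $z=\lambda$, so that the principal parts yield \eqref{fir0} and \eqref{second0} while the regular remainders are the assumed conditions \eqref{muu1}--\eqref{muu3} (the paper's $I$, $II$, $III$), the degree bound $\deg c\leq2$ killing the spurious polynomial piece exactly as you anticipate. The only wrinkle your sketch glosses over is that in the second compatibility condition the leftover $\mu$-linear regular part is not purely a combination of \eqref{muu2} and \eqref{muu3} but also involves $\lim_{z\to\lambda}I$ and $\lim_{z\to\lambda}III/(z-\lambda)$, which vanish under the same hypotheses, so the argument closes as you expect.
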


 The proof of Theorem~\ref{main0} is deferred to Appendixes~\ref{ap0},~\ref{ap1} and~\ref{ap2}.

\subsection{Isomonodromic deformations of Heun class equations}

This subsection contains the main results of our paper. We~will suppose
 that $\sigma$ is a polynomial of degree $\leq3$,
$\tau$ is a polynomial of degree $\leq2$ and
$\eta\sigma$ is a polynomial of degree $\leq4$.
In~other words, we will assume that
\eqref{heudef1} is a Heun class equation.
We~will show that Theorem~\ref{main0} can be applied
to a large family of Heun class equations, including normal forms of
all its types. As~a~result we obtain all types of Painlev\'e equation.

Our main results will be formulated in two theorems. In~Theorem~\ref{main1} we still try to give a~unified treatment. More precisely,
we consider two closely related ansatzes, which we call A and B.
Ansatz A is applicable if $\sigma$ has a
zero. Ansatz B can be used if the degree of $\sigma$ is $\leq2$. The time
variable is not specified, it is only constrained by certain
conditions.

In~Theorem~\ref{main} the time variable is always
explicit. Unfortunately, it seems impossible to do it in a unified
way~--- we are compelled to consider 5 distinct cases. (Note that 5 is
still less than the number of types of Heun class equations. Besides,
some of these cases are applicable to more than one type).

\begin{Theorem}\label{main1}
{\bf Case A.} Assume that $s\in\cc$ and $\sigma(s)=0$, so that we
 can write $\sigma(z)=(z-s)\rho(z)$ for a polynomial $\rho$ of
 degree $\leq2$. We~assume that $\sigma$, $\tau$, $\eta$
 depend on~$t$. Let $m$ be a function of~$t$ satisfying the following conditions
 \begin{gather}
 \partial_t \frac{\tau}{\sigma}(z)=\frac{m\tau(s)}{(z-s)^2},\label{moo1}
 \\
 \nonumber
 \frac{\dot\sigma}{\sigma}(z)\big(\eta(z)-\eta(\lambda)\big)-\dot\eta(z)+\dot\eta(\lambda)
 = m\bigg(\frac{\big(\eta(z)-\eta(\lambda)\big)(\lambda-s)\rho(z)}{(z-\lambda)(z-s)}
 -\eta'(\lambda)\rho(\lambda)\bigg) %
 \\ \hphantom{\frac{\dot\sigma}{\sigma}(z)\big(\eta(z)-\eta(\lambda)\big)}
{}+\frac{(\lambda-s)}{(z-\lambda)^2}\big(2\rho\eta(z)-2\rho\eta(\lambda) -\big((\rho\eta)'(z)+(\rho\eta)'(\lambda)\big)(z-\lambda)\big),\label{moo2}
\\
\frac{\dot\sigma}{\sigma}(z)- \frac{\dot\sigma}{\sigma}(\lambda)=
 m\rho(s)\frac{(z-\lambda)}{(z-s)(\lambda-s)}.\label{moo3}
 \end{gather}
 Define the compatibility functions
 \begin{gather*}
 a(t,\lambda,z)=\frac{m(\lambda-s)\rho(z)}{z-\lambda},\qquad
 b(t,\lambda,z)=-\frac{m(\lambda-s)\rho(\lambda)\mu}{z-\lambda},
 \end{gather*}
 and the Hamiltonian
 \begin{gather}
 \label{haha1}
 H(t,\lambda,\mu)=m\big(\eta(\lambda)+\big(\tau(\lambda)-(\lambda-s)\rho'(\lambda)\big)\mu
 + \sigma(\lambda)\mu^2\big).
\end{gather}
 Then $\lambda$, $\mu$ satisfy the Hamilton equations with respect to $H$, that is
\begin{gather*}
\frac{\d\lambda}{\d t}=\frac{\partial H}{\partial \mu}(t,\lambda,\mu),
\\
\frac{\d\mu}{\d t}=-\frac{\partial H}{\partial \lambda}(t,\lambda,\mu),
\end{gather*}
 if and only if~\eqref{compa1.} and~\eqref{compa2.} hold.

 \medskip

\noindent {\bf Case B.} Assume that $\deg\sigma\leq2$ and $\deg\sigma\eta\leq3$. Suppose that
 $\tau$, $\eta$, but not $\sigma$ depend on $t$. Let~$m$ be a function of $t$ satisfying
\begin{gather}\label{mii1}
 \frac{\dot \tau(z)}{\sigma(z)}=m\frac{\tau''}{2},
 \\ \label{mii2}
 \dot\eta(z)-\dot\eta(\lambda)= m\frac{(\sigma\eta)'''}{6}(z-\lambda).
\end{gather}
 Define the compatibility functions
 \begin{gather*}
 a(t,\lambda,z)=\frac{m\sigma(z)}{z-\lambda},\qquad
 b(t,\lambda,z)=-\frac{m\sigma(\lambda)\mu}{z-\lambda},
 \end{gather*}
and the Hamiltonian
 \begin{gather}\label{haha2}
 H(t,\lambda,\mu):=m\big(\eta(\lambda)+\big(\tau(\lambda)-\sigma'(\lambda)\big) \mu
 +\sigma(\lambda)\mu^2\big).
 \end{gather}
 Then $\lambda$, $\mu$ satisfy the Hamilton equations with
 respect to $H$ if and only if~\eqref{compa1.} and~\eqref{compa2.} hold.
 \end{Theorem}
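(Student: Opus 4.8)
The plan is to deduce both cases of Theorem~\ref{main1} directly from Theorem~\ref{main0}. Since Theorem~\ref{main0} already establishes that, for an arbitrary admissible choice of the polynomial $c(t,\lambda,z)$, the Hamilton-type system \eqref{fir0}--\eqref{second0} is equivalent to the compatibility conditions \eqref{compa1.} and \eqref{compa2.}, it suffices to exhibit in each of Cases~A and~B an explicit $t,\lambda$-dependent polynomial $c$ of degree $\leq2$ such that: (i) the compatibility functions \eqref{compa3} become the $a,b$ written in Theorem~\ref{main1}; (ii) the hypotheses \eqref{muu1}--\eqref{muu3} reduce to the stated conditions (\eqref{moo1}--\eqref{moo3} in Case~A, \eqref{mii1}--\eqref{mii2} in Case~B); and (iii) the Hamilton equations of $H$ coincide with \eqref{fir0}--\eqref{second0}. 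The reduction is essentially algebraic once the right $c$ is chosen.

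For Case~A, write $\sigma(z)=(z-s)\rho(z)$ with $\deg\rho\leq2$ and set $c(z):=m(\lambda-s)\rho(z)$, a polynomial of degree $\leq2$ in $z$; it reproduces $a=\frac{m(\lambda-s)\rho(z)}{z-\lambda}$ and $b=-\frac{m(\lambda-s)\rho(\lambda)\mu}{z-\lambda}$. The key is that the factor $\rho$ in $c$ cancels the factor $\rho$ in $\sigma$, so $\frac{c}{\sigma}=\frac{m(\lambda-s)}{z-s}$; hence $\frac{c\tau}{\sigma}(z)=m(\lambda-s)\frac{\tau(z)}{z-s}$, and using $\sigma'=\rho+(z-s)\rho'$ one gets $\frac{c\sigma'}{\sigma}(z)-c'(z)=m(\lambda-s)\frac{\rho(z)}{z-s}$, while $c\eta=m(\lambda-s)\,\rho\eta$. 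Substituting these into \eqref{muu1} (and using $\frac{\dot\tau}{\sigma}-\frac{\tau\dot\sigma}{\sigma^2}=\partial_t\frac{\tau}{\sigma}$) yields \eqref{moo1}; into \eqref{muu3} yields \eqref{moo3}; into \eqref{muu2} yields \eqref{moo2}. Finally, inserting $c(\lambda)=m\sigma(\lambda)$, $c'(\lambda)=m(\lambda-s)\rho'(\lambda)$ and $\frac{c\tau}{\sigma}(\lambda)=m\tau(\lambda)$ into \eqref{fir0} gives $\partial_\mu H$ for $H$ as in \eqref{haha1}, and expanding $\sigma''=2\rho'+(z-s)\rho''$ turns \eqref{second0} into $-\partial_\lambda H$.

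For Case~B, take $c(z):=m\sigma(z)$, which has degree $\leq2$ since $\deg\sigma\leq2$ and again reproduces the stated $a,b$. Now $\sigma$ does not depend on $t$, so every $\frac{\dot\sigma}{\sigma}$ term vanishes; moreover $\frac{c\tau}{\sigma}=m\tau$ (degree $\leq2$), $\frac{c\sigma'}{\sigma}=c'=m\sigma'$ so $\frac{c\sigma'}{\sigma}-c'=0$, and $c\eta=m\sigma\eta$ has degree $\leq3$ by the hypothesis $\deg\sigma\eta\leq3$. The reduction then rests on two elementary identities,
\[
P(z)-P(\lambda)-(z-\lambda)P'(z)=-\frac{P''}{2}(z-\lambda)^2,\qquad
2Q(z)-2Q(\lambda)-\big(Q'(z)+Q'(\lambda)\big)(z-\lambda)=-\frac{Q'''}{6}(z-\lambda)^3,
\]
valid for $\deg P\leq2$ and $\deg Q\leq3$. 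Applying the first with $P=m\tau$ turns \eqref{muu1} into \eqref{mii1}; condition \eqref{muu3} collapses to $0=0$ since $\sigma'(z)-\sigma'(\lambda)=\sigma''(z-\lambda)$; and applying the second with $Q=m\sigma\eta$ turns \eqref{muu2} into \eqref{mii2}. The identification of \eqref{fir0}--\eqref{second0} with the Hamilton equations of $H$ in \eqref{haha2} is then the same substitution as before, now with $c(\lambda)=m\sigma(\lambda)$ and $c''=m\sigma''$.

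The argument is mostly bookkeeping; the one mildly delicate part is Case~A, where $\frac{c}{\sigma}$ is not a polynomial and one must carefully track the simple pole at $z=s$ --- this pole is exactly what produces the terms $\frac{m\tau(s)}{(z-s)^2}$ in \eqref{moo1} and $m\rho(s)\frac{z-\lambda}{(z-s)(\lambda-s)}$ in \eqref{moo3}. Matching \eqref{muu2} with \eqref{moo2} comes down to observing that the Hermite-type numerator $2c\eta(z)-2c\eta(\lambda)-\big((c\eta)'(z)+(c\eta)'(\lambda)\big)(z-\lambda)$ carries the scalar $m(\lambda-s)$ out front, and that $\big(\frac{\sigma'c}{\sigma}-c'\big)(\lambda)=m\rho(\lambda)$. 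Beyond that no genuine obstacle remains, since the conceptual content is already contained in Theorem~\ref{main0}.
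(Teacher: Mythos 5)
Your proposal is correct and follows essentially the same route as the paper: the authors also prove Theorem~\ref{main1} by specializing Theorem~\ref{main0} to $c(z)=m(\lambda-s)\rho(z)$ in Case~A and $c(z)=m\sigma(z)$ in Case~B, reducing \eqref{muu1}--\eqref{muu3} to \eqref{moo1}--\eqref{moo3}, resp.\ \eqref{mii1}--\eqref{mii2}, by the same degree-$\leq2$/degree-$\leq3$ identities and the same bookkeeping of the simple pole at $z=s$, and then identifying \eqref{fir0}--\eqref{second0} with the Hamilton equations of \eqref{haha1}, resp.\ \eqref{haha2}. The computations you sketch coincide with those carried out in the paper's Appendices~\ref{apa} and~\ref{apb}.
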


 \begin{proof} We~set
 \begin{gather}
 c(z)=m(\lambda-s)\rho(z),\qquad\text{in Case A},\label{caso1}
 \\
 c(z)=m\sigma(z),\qquad\text{in Case B}.\label{caso2}
 \end{gather}
 We~apply Theorem~\ref{main0}.
 More precisely, we check that conditions \eqref{muu1}, \eqref{muu2} and \eqref{muu3}
are equi\-valent to \eqref{moo1}, \eqref{moo2} and \eqref{moo3},
resp.~to~\eqref{mii1}, \eqref{mii2}.
We~also verify that equations~\eqref{fir0} and \eqref{second0}
coincide with the Hamilton equations for $H(t,\lambda,\mu)$.

Details of computations are given in Appendixes~\ref{apa},~\ref{apa1}
and~\ref{apb}. \end{proof}

Note that it is possible to unify the formulas \eqref{haha1} and
\eqref{haha2} for the Hamiltonian in a single formula using the
polynomial $c$ from \eqref{caso1} and \eqref{caso2}:
\begin{gather*}
 H(t,\lambda,\mu)=\frac{\eta(\lambda)c(\lambda)}{\sigma(\lambda)}
+\mu\bigg(\frac{\tau(\lambda)c(\lambda)}{\sigma(\lambda)}-c'(\lambda)\bigg)
 + \mu^2 c(\lambda).
 \end{gather*}

Ansatzes A and B of Theorem~\ref{main1} will still be subdivided into
several subcases that differ by the choice of the time variable.
All these subcases are described in Theorem~\ref{main} below, which together with Theorem~\ref{main1} describes the main
result of our paper.

The main subcase of Ansatz A is A1, where the time variable is the
position of a nondegenerate zero of $\sigma$. This is of course
generically true, however there are situations when $\sigma$ does not
have nondegenerate zeros.
Subcases Ap and Aq can be applied when $\sigma$ has a degenerate (that
is, at least double) zero.

In~Subcases Ap and Bp time is contained in the function
$p_0$ (the coefficient of the first order term) of \eqref{heudef1}
and in Aq and Bq it is contained in $q_0$. Subcases Ap and Bp are typically used when the
rank at $\infty$ is an integer. Subcases Aq and Bq are more appropriate for
degenerate types, when the rank at $\infty$ is half-integer.

In~the following theorem,
first we describe $\sigma$, $\tau$ and $\eta$ that belong to a given
subcase, specifying explicitly the dependence on $t$. Next we write the corresponding (undeformed) Heun class
operator in the principal form. Then we give the corresponding compatibility
functions $a$, $b$ and the Hamiltonian $H$.

By writing $\deg \beta\leq n$ we mean that $\beta$ is a polynomial in
$z$ of degree $\leq n$. Unlike in~Theo\-rem~\ref{main}, the dependence on the parameter $t$ will be always explicitly given.
 \begin{Theorem}\label{main}${}$
 \begin{itemize}\itemsep=0pt
 \item {\bf Subcase A1.} Let
 \begin{gather*}
 \sigma(z)=(z-t)\rho(z),\qquad\deg\rho\leq2;
 \\
 \tau(z)=(1-\kappa)\rho(z)+\phi(z)(z-t),\qquad\kappa\in\cc,\qquad\deg\phi\leq1;
 \\
\eta(z)=\frac{\alpha\rho(t)}{z-t}+\eta_0(z)\qquad\alpha\in\cc,\qquad\deg\eta_0\leq1.
 \end{gather*}
 Consider
 \begin{gather*}
 \partial_z^2+
 \Big(\frac{1-\kappa}{z-t}+\frac{\phi(z)}{\rho(z)}\Big)\partial_z+
 \frac{\alpha\rho(t)}{(z-t)^2\rho(z)}+\frac{\eta_0(z)}{(z-t)\rho(z)}.
 \end{gather*}
If $ \rho(t)\neq0$, then Theorem~$\ref{main1}$A holds with $m=\rho(t)^{-1}$,
\begin{gather*}
 a(z):=\frac{(\lambda-t)\rho(z)}{\rho(t)(z-\lambda)},\qquad
b(z):=-\frac{\sigma(\lambda)\mu}{\rho(t)(z-\lambda)},
\\
\rho(t)H:=(\lambda-t)\rho(\lambda)\mu^2
+\big((1-\kappa)\rho(\lambda)+(\lambda-t)(\phi(\lambda)-\rho'(\lambda)\big)\mu
+\frac{\alpha\rho(t)}{\lambda-t}+\eta_0(\lambda).
\end{gather*}
\item {\bf Subcase Ap.} Let
 \begin{gather*}
 \sigma(z)=(z-s)^2\rho_1(z),\qquad s\in\cc,\qquad\deg\rho_1\leq1;
 \\
 \tau(z)=t\rho_1(z)+\tau_0(z),\qquad \deg\tau_0\leq2 ;
 \\
\eta(z)=\frac{\psi(z)}{\rho_1(z)},\qquad \deg\psi\leq2.
 \end{gather*}
 Consider
 \begin{gather*}
 \partial_z^2+ \bigg( \frac{t}{(z-s)^2}+\frac{\tau_0(z)}{(z-s)^2\rho_1(z)}\bigg)\partial_z
 +\frac{\psi(z)}{(z-s)^2\rho_1(z)^2}.
 \end{gather*}
 If $\tau_0(s)\neq0$ or $\rho_1(s)\neq0,$
 then Theorem~$\ref{main1}$A holds with $m=\big(\tau_0(s)+t\rho_1(s)\big)^{-1}$,
\begin{gather*}
 a(z):=\frac{(\lambda-s)(z-s)\rho_1(z)}{\big(\tau_0(s)+t\rho_1(s)\big)(z-\lambda)},\qquad
b(z):=-\frac{(\lambda-s)^2\rho_1(\lambda)\mu}{\big(\tau_0(s)+t\rho_1(s)\big)(z-\lambda)},
\\
\big(\tau_0(s)+t\rho_1(s)\big)H:=(\lambda-s)^2\rho_1(\lambda)\mu^2
\\ \hphantom{\big(\tau_0(s)+t\rho_1(s)\big)H:=}
{}+\big(t\rho_1(\lambda)+\tau_0(\lambda)-(\lambda-s)\rho_1(\lambda)-(\lambda-s)^2\rho_1'
\big)\mu+\frac{\psi(\lambda)}{\rho_1(\lambda)}.
 \end{gather*}
\item
 {\bf Subcase Aq.} Let
 \begin{gather*}
 \sigma(z):=(z-s)^2\rho_1(z),\qquad s\in\cc,\qquad\deg\rho_1\leq1;
 \\
 \tau(z)=(z-s)\phi(z),\qquad \deg\phi\leq1;
 \\
 \eta(z)=\frac{t}{z-s}+\eta_0(z),\qquad \deg\rho_1\eta_0\leq2.
 \end{gather*}
 Consider
 \begin{gather*}
 \partial_z^2+ \frac{\phi(z)}{(z-s)\rho_1(z)}\partial_z
 +\frac{1}{(z-s)^2\rho_1(z)}\bigg(\frac{t}{z-s}+\eta_0(z)\bigg).
 \end{gather*}
If $(\sigma\eta_0)'(s)\neq0$ or $\rho_1(s)\neq0$, then Theorem~$\ref{main1}$A holds with $m=\big((\sigma\eta_0)'(s)+\rho_1(s)t\big)^{-1}{:}$
 \begin{gather*}
 a(z):=\frac{(\lambda-s)(z-s)\rho_1(z)}{\big((\sigma\eta_0)'(s)+\rho_1(s)t\big)(z-\lambda)},\qquad
b(z):=-\frac{(\lambda-s)^2\rho_1(\lambda)\mu}{\big((\sigma\eta_0)'(s)+\rho_1(s)t\big) (z-\lambda)},
\\
\big((\sigma\eta_0)'(s)+\rho_1(s)t\big) H:=(\lambda-s)^2\rho_1(\lambda)\mu^2\nonumber
+\big((\lambda-s)(\phi(\lambda)-\rho_1(\lambda))-(\lambda-s)^2\rho_1'\big)\mu
\\ \hphantom{\big((\sigma\eta_0)'(s)+\rho_1(s)t\big) H:=}
{}+\frac{t}{\lambda-s}+\eta_0(\lambda).
 \end{gather*}

\item
 {\bf Subcase Bp.} Let
 \begin{gather*}
 \sigma(z),\qquad\deg\sigma\leq2;
 \\
 \tau(z)=t\sigma(z)+\tau_0(z),\qquad\deg\tau_0\leq2;
 \\
 \eta(z),\qquad\deg\sigma\eta\leq2.
 \end{gather*}
 Consider
 \begin{gather*}
 \partial_z^2+ \bigg(t+\frac{\tau_0(z)}{\sigma(z)}\bigg)\partial_z
 +\frac{\eta(z)}{\sigma(z)}.
 \end{gather*}
If $\sigma''\neq0$ or $\tau_0''\neq0$, then Theorem~$\ref{main1}$B holds with
 $m=\big(t\frac{\sigma''}2+\frac{\tau_0''}2\big)^{-1}{:}$
\begin{gather*}
a(z):=\frac{\sigma(z)}{
\big(t\frac{\sigma''}2+\frac{\tau_0''}2\big) (z-\lambda)},\qquad b(z):=-\frac{\sigma(\lambda)\mu}{\big(t\frac{\sigma''}2+\frac{\tau_0''}2\big)(z-\lambda)},
\\
\bigg (t\frac{\sigma''}2+\frac{\tau_0''}2\bigg) H:=\sigma(\lambda)\mu^2+
 \big(t\sigma(\lambda)+\tau_0(\lambda)-\sigma'(\lambda)\big)\mu+
\eta(\lambda).
 \end{gather*}

\item {\bf Subcase Bq.} Let
 \begin{gather*}
 \sigma(z),\qquad \deg\sigma\leq2;\\
 \tau(z),\qquad\deg\tau\leq1;\\
 \eta(z)=tz+\eta_0(z),\qquad \deg\sigma\eta_0\leq3.
 \end{gather*}
 Consider
 \begin{gather*}
 \partial_z^2+ \frac{ \tau(z)}{\sigma(z)}\partial_z
 +\frac{tz}{\sigma(z)}+\frac{\eta_0(z)}{\sigma(z)}.
 \end{gather*}
If $\sigma''\neq0$ or $ (\sigma\eta_0)'''\neq0$, then Theorem~$\ref{main1}$B holds with $m=\big(t\frac{\sigma''}{2}+\frac{(\sigma\eta_0)'''}{6}\big)^{-1}$,
\begin{gather*}
a(z):=\frac{\sigma(z)}{\big(t\frac{\sigma''}{2}+\frac{(\sigma\eta_0)'''}{6}\big) (z-\lambda)},\qquad b(z):=-\frac{\sigma(\lambda)\mu}{\big(t\frac{\sigma''}{2}+\frac{(\sigma\eta_0)'''}{6}\big)(z-\lambda)},
\\
\bigg(t\frac{\sigma''}{2}+\frac{(\sigma\eta_0)'''}{6}\bigg) H:=\sigma(\lambda)\mu^2+\big(\tau(\lambda)-\sigma'(\lambda)\big)\mu
 +t\lambda+\eta_0(\lambda).
\end{gather*}
\end{itemize}
\end{Theorem}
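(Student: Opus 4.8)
The proof is a case analysis running through Subcases~A1, Ap, Aq, Bp, Bq, and in each of them the only substantive step is to check that the explicitly displayed $\sigma,\tau,\eta$ satisfy the hypotheses of the appropriate half of Theorem~\ref{main1} (Case~A for A1, Ap, Aq; Case~B for Bp, Bq) with the stated function $m(t)$. Once this is done, the compatibility functions $a,b$ and the Hamiltonian $H$ listed in each item are simply the formulas of Theorem~\ref{main1}, with $c(z)$ taken from \eqref{caso1} or \eqref{caso2}, rewritten after clearing the scalar $m^{-1}$. So I would begin by observing that all five forms are Heun class operators: $\deg\sigma\le3$, $\deg\tau\le2$ and $\deg\sigma\eta\le4$ are immediate from the imposed degree bounds on $\rho,\rho_1,\phi,\psi,\tau_0,\eta_0$ (for instance in Aq, $\sigma\eta=t(z-s)\rho_1+(z-s)^2\rho_1\eta_0$ has degree $\le4$ because $\deg\rho_1\eta_0\le2$), and in the B-subcases one moreover has $\deg\sigma\le2$ and $\deg\sigma\eta\le3$, so Case~B of Theorem~\ref{main1} is applicable in principle.

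Next I would pin down $m$. In Case~A the determining identity is \eqref{moo1}, supplemented by \eqref{moo3}: writing $\partial_t\frac{\tau}{\sigma}(z)$ out for the given data yields a constant multiple of $(z-s)^{-2}$, and comparison with $\frac{m\tau(s)}{(z-s)^2}$ forces $m=\rho(t)^{-1}$ in A1 (here $s=t$, so $\dot\sigma=-\rho(z)$ and one uses $\tau(t)=(1-\kappa)\rho(t)$; for $\kappa=1$ one reads $m$ off instead from \eqref{moo3} via $\frac{\dot\sigma}{\sigma}(z)=-\frac1{z-t}$), $m=\bigl(\tau_0(s)+t\rho_1(s)\bigr)^{-1}$ in Ap, and $m=\bigl((\sigma\eta_0)'(s)+t\rho_1(s)\bigr)^{-1}$ in Aq (where $\dot\sigma=\dot\tau=0$, $\dot\eta=\frac1{z-s}$, so \eqref{moo1} and \eqref{moo3} read $0=0$ and $m$ emerges from \eqref{moo2}). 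In Case~B, \eqref{mii1} gives $m=\bigl(t\frac{\sigma''}{2}+\frac{\tau_0''}{2}\bigr)^{-1}$ in Bp (since $\dot\tau=\sigma$ and $\tau''=t\sigma''+\tau_0''$), while in Bq the identity $\dot\eta(z)-\dot\eta(\lambda)=z-\lambda$ together with $(z\sigma)'''=3\sigma''$ gives $m=\bigl(t\frac{\sigma''}{2}+\frac{(\sigma\eta_0)'''}{6}\bigr)^{-1}$. In every subcase the stated nondegeneracy hypothesis is exactly what makes this $m$ a well-defined, not identically vanishing, function of $t$, and hence $a,b,H$ meaningful.

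Finally I would verify the remaining conditions. In Bp, \eqref{mii2} is trivial ($\eta$ is $t$-independent and $\deg\sigma\eta\le2$ force $(\sigma\eta)'''=0$), and in Bq it is part of the computation that already fixed $m$. In Subcases~Ap and Aq one has $\dot\sigma=0$ and $\dot\eta=0$ (Ap) or $\dot\sigma=\dot\tau=0$ (Aq), so the left sides of \eqref{moo3} and of the $\dot\eta$-free part of \eqref{moo2} vanish; their right sides vanish because $\sigma$ has a double zero at $s$ — the factorization forces $\rho(z)=(z-s)\rho_1(z)$, so $\rho(s)=0$ kills \eqref{moo3}, and in \eqref{moo2} the combination $2\rho\eta(z)-2\rho\eta(\lambda)-\bigl((\rho\eta)'(z)+(\rho\eta)'(\lambda)\bigr)(z-\lambda)$ vanishes to the needed order once $\rho\eta=(z-s)\rho_1\eta$ and the degree bounds are used. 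The genuinely computational subcase, and the one I expect to be the main obstacle, is A1: there neither $\dot\sigma=-\rho(z)$ nor $\dot\eta=\alpha\bigl(\frac{\rho'(t)}{z-t}+\frac{\rho(t)}{(z-t)^2}\bigr)$ vanishes, and \eqref{moo2}, \eqref{moo3} must be checked by multiplying through by $(z-t)(z-\lambda)^2$ and verifying the resulting polynomial identities in $z$, where the hypotheses $\deg\eta_0\le1$, $\deg\phi\le1$, $\deg\rho\le2$ are used in an essential way. Once these identities are in place, Theorem~\ref{main1} applies verbatim and yields that $\lambda,\mu$ satisfy the Hamilton equations for the displayed $H$ precisely when the compatibility equations \eqref{compa1.}--\eqref{compa2.} hold, which is the assertion of the theorem; the routine polynomial verifications for the five subcases may be organized in an appendix.
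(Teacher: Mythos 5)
Your proposal is correct and takes essentially the same route as the paper: Appendices~\ref{apa}--\ref{apb} prove Theorem~\ref{main} exactly by checking, subcase by subcase, that the displayed data satisfy \eqref{moo1}--\eqref{moo3} (Case A) or \eqref{mii1}--\eqref{mii2} (Case B) with the stated $m$, the conditions you single out being precisely the ones that determine $m$ in each subcase, and $a$, $b$, $H$ then being the formulas of Theorem~\ref{main1} with $c$ from \eqref{caso1}--\eqref{caso2}. One minor imprecision: in Subcases Ap/Aq the right-hand side of \eqref{moo2} does not vanish because the single combination $2\rho\eta(z)-2\rho\eta(\lambda)-\big((\rho\eta)'(z)+(\rho\eta)'(\lambda)\big)(z-\lambda)$ vanishes ``to the needed order'' (it equals $-\tfrac{(\rho\eta)'''}{6}(z-\lambda)^3$, generically nonzero); rather, after using $\rho=(z-s)\rho_1$ all terms combine into $\tfrac{m(\lambda-s)}{z-\lambda}\big(\rho_1\eta(z)-\rho_1\eta(\lambda)-(\rho_1\eta)'(\lambda)(z-\lambda)-\tfrac{(\rho_1\eta)''}{2}(z-\lambda)^2\big)$, which vanishes by exactness of the Taylor expansion of the degree-$\leq2$ polynomial $\rho_1\eta$ --- the explicit computation you defer to an appendix would of course reveal this cancellation.
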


Note that one can deduce Subcases Ap and Aq from Subcases Bp resp.~Bq by applying the symmetry described above Proposition~\ref{symmo}.
However, in Appendices~\ref{apa},~\ref{apa1} and~\ref{apb}, where we
prove Theorem~\ref{main}, we will give independent proofs of all subcases.

Note also that the union of subcases of Theorem~\ref{main} does not cover
 the whole Heun class. However, it covers all appropriately
 interpreted normal forms listed
 in Theorem~\ref{heuni}. This will be further discussed in the
 following subsection.

\begin{Remark}
 In~our applications we will sometimes use rescaled versions of the above
 constructions. In~fact, if $\epsilon\neq0$, we replace $t$ with $\epsilon t$ and multiply $a$, $b$, $H$ with $\epsilon$, then the above theorem remains true.
\end{Remark}

\subsection{Correspondence between Heun class and Painlev\'e equations}

Traditionally, Painlev\'e equations are divided into 6 types: Painlev\'e I--VI. However, one can argue that some of their degenerate cases should be treated as separate types.

Thus Painlev\'e V~(\ref{painleve5}) splits into the nondegenerate Painlev\'e V with $\delta\neq0$ and the degenerate Painlev\'e V with $\delta=0$. We~denote the former simply by ndeg-V and the latter by deg-V.
One can show that deg-V Painlev\'e is equivalent to Painlev\'e III$'$, however
it is natural to treat it as a separate type.
All that is explained in Section~\ref{PainleveV}.

With Painlev\'e III~(\ref{painleve3}) the situation is more complicated.
First of all, following various authors, we prefer to use the Painlev\'e III$'$ equation, which is equivalent to Painlev\'e III by a simple transformation. Beside the nondegenerate case we have the degenerate case
and the doubly degenerate case.
We~denote them respectively, ndeg-III$'$, deg-III$'$ and ddeg-III$'$. $\big($Ohyama--Oku\-mura denote them $\big(D_6^{(1)}\big)$, $\big(D_7^{(1)}\big)$, $\big(D_8^{(1)}\big).\big)$
One can also consider an alternative degenerate case $\gamma\neq0$, $\delta=0$, which is however equivalent to deg-III$'$.
$\big($Ohyama--Okumura denotes it $\big(D_7^{(1)}\big){-}2.\big)$ All of that is explained in
Section~\ref{PainleveIII'}.

Finally, it is natural to consider the Painlev\'e 34 equation~(\ref{P34}), which can be viewed as~a degenerate case of the Painlev\'e IV equation~(\ref{painleve434}).
One can show that Painlev\'e 34 is equivalent to Painlev\'e II, however it is natural to keep it as a separate type.
See Section~\ref{Painleve IV-34} for more comments.

This way we obtain 10 types of Painlev\'e equations. Recall that we
also have 10 types of Heun class equations. In~fact, each type of Painlev\'e can be derived from one of the types of deformed Heun.
Here is the list of correspondences:
\begin{center}\renewcommand{\arraystretch}{1.3}
{\small\begin{tabular}{lcllc}
\hline
(standard) Heun &$(\un \un \un ;\un )$ & A1 a),b) &Painlev\'e VI &$(\un \un \un \un )$
\\ \hline
\raisebox{-2.5mm}[0mm][0mm]{ndeg. confluent Heun} & $(\un \un;2)$&A1 a),b), Bp b)
& \raisebox{-2.5mm}[0mm][0mm]{Painlev\'e ndeg-V} & \raisebox{-2.5mm}[0mm][0mm]{$(\un \un 2)$}
\\
&$(2\un ;\un )$&A1 a), Ap a)
\\\hline
\raisebox{-2.5mm}[0mm][0mm]{deg. confluent Heun} & $\big(\un \un ;\frac32\big)$&A1, Bq
& \raisebox{-2.5mm}[0mm][0mm]{Painlev\'e deg-V} & \raisebox{-2.5mm}[0mm][0mm]{$\big(\un \un \frac32\big)$}
\\
&$\big(\frac32\un ;\un \big)$& Aq
\\\hline
db. confluent Heun&$(2;2)$& Ap a), Bp b)& Painlev\'e ndeg-III$'$ & $(22)$
\\\hline
\raisebox{-2.5mm}[0mm][0mm]{deg. db. confluent Heun}& $\big(\frac32;2\big)$&Aq, Bp
& \raisebox{-2.5mm}[0mm][0mm]{Painlev\'e deg-III$'$}& \raisebox{-2.5mm}[0mm][0mm]{$\big(\frac322\big)$}
\\
&$\big(2;\frac32\big)$&Ap, Bq
\\ \hline
ddeg. db. confluent Heun&$\big(\frac32;\frac32\big)$ &Bq&Painlev\'e ddeg-III$'$ &$\big(\frac32\frac32\big)$
\\ \hline
\raisebox{-2.5mm}[0mm][0mm]{ndeg. bi-confluent Heun}& $(\un ;3)$& A1 a),b), Bp a),b)
&\raisebox{-2.5mm}[0mm][0mm]{Painlev\'e IV} & \raisebox{-2.5mm}[0mm][0mm]{$(\un 3)$}
\\
&$(3;\un)$&Ap b)
\\ \hline
\raisebox{-2.5mm}[0mm][0mm]{deg. bi-confluent Heun}& $\big(\un ;\frac52\big)$&Bq
& \raisebox{-2.5mm}[0mm][0mm]{Painlev\'e 34} & \raisebox{-2.5mm}[0mm][0mm]{$\big(\un \frac52\big)$}
\\
&$(\frac52;\un )$&Aq
\\ \hline
ndeg. tri-confluent Heun& $(;4)$ &Bp&Painlev\'e II&$(4)$
\\ \hline
deg. tri-confluent Heun& $\big(;\frac72\big)$ & Bq&Painlev\'e I & $\big(\frac72\big)$
\\ \hline
\end{tabular}}
\end{center}

In~the first column we give the name of the Heun class type. For typographical reasons we abbreviate ``nondegenerate'' to ndeg, ``degenerate'' to deg, ``doubly degenerate'' to
ddeg and ``doubly'' to db.

In~the second column we give the symbol of the type in terms of the ranks of singularities. We~also indicate which singularity is at $\infty$. In~several cases there are two possibilities~--- we give both of them.

In~the third column we indicate subcases of Theorem~\ref{main} which can
be applied to certain normal forms of a given type. Normal forms
are taken from the table in Theorem~\ref{heuni}.
If in that table more than one normal form is given,
 a) or b) indicates which normal form is considered.
 (In~some cases, the normal forms from Theorem~\ref{heuni} need
to be slightly modified: When we
apply A1 to the form b) of
$(\un\un\un;\un)$ we change the roles of the roots; for
$(\un\un;2)$ and $(\un;3)$ we shift one root from $0$ to $t$;
finally for $(2\un;\un)$ and $\big(\un\un;\frac32\big)$ we shift a root from
$1$ to $t$.)

In~the fourth column we give the name of the Painlev\'e type that can be obtained by the isomonodromic deformation.

In~the fifth column we list the symbol in terms of ranks of singularities without the indication of the position of $\infty$. We~will often use it in the sequel as the name of the given type
of the Painlev\'e equation. Thus, e.g., the Painlev\'e
$\big(\frac322\big)$ equation is an alternative name for the degenerate Painlev\'e III$'$ equation.
We~will actually prefer these names to the traditional ones, similarly as
 Slavyanov--Lay in \cite{Sl_Lay}.

\looseness=-1 Occasionally, we will also use the names for Painlev\'e equation involving the
 position of $\infty$. For instance,
 the Painlev\'e
$\big(\frac32;2\big)$ equation will mean the form of the degenerate Painlev\'e III$'$ equation
 obtained from
the Heun $\big(\frac32;2\big)$ equation. The Painlev\'e
$\big(2;\frac32\big)$ equation will denote the equation
obtained from
the Heun $\big(2;\frac32\big)$ equation.
Both forms of Painlev\'e equation are equivalent.

We~will discuss further the classification of Painlev\'e equations in Section~\ref{FivesupertypesofPainleveequation}, where we will see how to group the 10 types into 5 supertypes,
parallel to the grouping of 10 types of Heun class equations into 5 supertypes.

In~the following subsections we describe how to obtain all types of Painlev\'e equations from deformed Heun class equations.
First we give the functions $\sigma$, $\tau$, $\eta$ describing one of possible normal forms of a given type of the Heun class equation.
We~indicate explicitly the dependence of
$\sigma$, $\tau$, $\eta$ on the time variable $t$.
Then we present this equation in its principal form. Next
we give the corresponding deformed equation.
Next we give the compatibility functions $a$, $b$ and the Painlev\'e Hamiltonian. Finally, we describe the resulting Painlev\'e equation.

Note that the whole procedure is determined by
 $\sigma$, $\tau$, $\eta$, by the choice of the time variable $t$ and the functions $a,b$. The latter are restricted by Theorem~\ref{main}. We~always indicate which case
of Theorem~\ref{main} we use.

\looseness=1
In~our derivations we follow 
the paper of Ohyama--Okumura \cite{Ohyama}.
We~have slightly changed their notation for some of the parameters.
We~parametrize the equations by the differences of indices at singular points of the deformed equation. In~particular, if the rounded rank at~$z_0$ is~1, the parameter is called $\kappa_{z_0}$, if the rank is $2$, it is called $\chi_{z_0}$ and if the rank is $3$ it is called~$\theta_{z_0}$.

 One of the parameters of the initial Heun class equation~--- the free term in $\eta$~--- does not enter in the deformed equation, and therefore is not used by \cite{Ohyama}. We~denote it simply by $c$.

Note that there is some arbitrariness in the choice of Hamiltonians, where a term depending on $t$, but not on $\lambda$, $\mu$, can always be added. We~always choose Hamiltonians coinciding with those of \cite{Ohyama}.

 If in a given type $\sigma$ has a root of multiplicity $1$,
one can usually use Case A1 of Theorem~\ref{main}. Following \cite{Ohyama}, we use it only
for Heun $(\un\un\un\un)$--Painlev\'e VI. For the other types we use Ap, Aq, Bp, Bq.

In~general, for each type of Heun class we give one derivation. The exception is
the type $\big(2\frac32\big)$, where,
 following again \cite{Ohyama}, we give two versions of derivations of deg-III$'$: one in the form of~$\big(2;\frac32\big)$, the other in the form of $\big(\frac32;2\big)$.

\subsection{From Heun (\underline1\underline{1}\underline{1};\underline{1}) to Painlev\'e VI} \label{painleve6}

Set
\begin{gather*}
\sigma(z)=z(z-1)(z-t),
\\
\tau(z)=(1-\kappa_0)(z-1)(z-t)+(1-\kappa_1)z(z-t)+(1-\kappa_t ) z(z-1),
\\
\eta(z)=\frac{\big((\kappa_0+\kappa_1+\kappa_t -1)^2-\kappa_{\infty}^2\big)
 z}{4}-c.
\end{gather*}
Heun $(\un \un \un ;\un )$ equation
\begin{gather*}
\partial_z^2\!+\bigg(\frac{1\!-\kappa_0}{z}\!+\frac{1\!-\kappa_1}{z\!-1}\!+\frac{1\!-\kappa_t }{z\!-t}\bigg)\partial_z\!+\frac{1}{z(z\!-1)(z\!-t)}\bigg(\frac{
\big((\kappa_0\!+\kappa_1\!+\kappa_t \!-1)^2\!-\kappa_{\infty}^2\big)}{4}z\!-c\bigg).
\end{gather*}
Deformed Heun $(\un \un \un ;\un )$ equation
\begin{gather*} 
\partial_z^2+\bigg(\frac{1-\kappa_0}{z}+\frac{1-\kappa_1}{z-1}
+\frac{1-\kappa_t}{z-t}-\frac{1}{z-\lambda}\bigg)\partial_z
\\ \qquad
{}+\frac{1}{z(z-1)(z-t)}\bigg(\frac{\big((\kappa_0+\kappa_1+\kappa_t -1)^2-\kappa_{\infty}^2\big)}
 {4}(z-\lambda)-\mu^2\lambda(\lambda-1)(\lambda-t)
 \\ \qquad\qquad
+\big(\kappa_0(\lambda-1)(\lambda-t)+\kappa_1\lambda(\lambda-t)
 +\kappa_t \lambda(\lambda-1)\big)\mu
+\frac{\lambda(\lambda-1)(\lambda-t)\mu}{(z-\lambda)}\bigg).
\end{gather*}
Type A1 compatibility functions
\begin{gather*}
a(z)=\frac{(\lambda-t)z(z-1)}{t(t-1)(z-\lambda)},\qquad
b(z)=-\frac{\lambda(\lambda-1)(\lambda-t)\mu}{t(t-1)(z-\lambda)}.
\end{gather*}
Painlev\'e $(\un \un \un ;\un )$ Hamiltonian
\begin{gather*}
t(t-1)H=\lambda(\lambda-1)(\lambda-t)\mu^2
-\big(\kappa_0(\lambda-1)(\lambda-t)+\kappa_1\lambda(\lambda-t)+(\kappa_t -1)\lambda(\lambda-1)\big)\mu
\\ \hphantom{t(t-1)H=}
{}+\frac{\big((\kappa_0+\kappa_1+\kappa_t -1)^2-\kappa_{\infty}^2\big)
 (\lambda-t)}{4}.
\end{gather*}
Painlev\'e $(\un \un \un ;\un )$ equation
\begin{gather}\nonumber
\frac{\d^2\lambda}{\d t^2}=\frac{1}{2}\bigg(\frac{1}{\lambda}+\frac{1}{\lambda-1} +\frac{1}{\lambda-t}\bigg)\bigg(\frac{\d\lambda}{\d t}\bigg)^2 -\bigg(\frac{1}{t}+\frac{1}{t-1}+\frac{1}{\lambda-t}\bigg)\frac{\d \lambda}{\d t}
\\ \hphantom{\frac{\d^2\lambda}{\d t^2}=} {}+\frac{\lambda(\lambda-1)(\lambda-t)}{t^2(t-1)^2} \bigg(\alpha+\beta\frac{t}{\lambda^2}+\gamma\frac{t-1}{(\lambda-1)^2}+\delta\frac{t(t-1)}{(\lambda-t)^2}\bigg),
\label{P6}
\end{gather}
where
\begin{gather*}
\alpha=\frac{1}{2}\kappa_{\infty}^2,\qquad
\beta=-\frac{1}{2}\kappa_0^2,\qquad
\gamma=\frac{1}{2}\kappa_1^2,\qquad
\delta=\frac{1}{2}\big(1-\kappa_t ^2\big).
\end{gather*}
The standard name of~(\ref{P6}) is
Painlev\'e VI equation.

\subsection{From Heun (2\un;\un) to Painlev\'e V}

Set
\begin{gather*}
 \sigma(z)=(z-1)^2z,
 \\
 \tau(z)=(2-\chi_1)z(z-1)+(1-\kappa_0)(z-1)^2+ tz,
 \\
 \eta(z)=\frac{\big((\kappa_0+\chi_1-1)^2-\kappa_\infty^2\big)(z-1)}{4}-c.
\end{gather*}
Heun $(2\un ;\un )$ equation (with the singularity of rank 2 put at 1):
\begin{gather*}
 \partial_z^2+\bigg(\frac{2-\chi_1}{z-1}+\frac{1-\kappa_0}{z}+\frac{t}{(z-1)^2}\bigg)\partial_z +\frac{1}{(z-1)^2z}\bigg(\frac{(\kappa_0+\chi_1-1)^2-\kappa_\infty^2}{4}(z-1)-c\bigg).
 \end{gather*}
Deformed Heun $(2\un ;\un )$ equation:
\begin{gather*}
\partial_z^2+\bigg(\frac{t}{(z-1)^2}+\frac{2-\chi_1}{z-1} +\frac{1-\kappa_0}{z}-\frac{1}{z-\lambda}\bigg)\partial_z
\\ \qquad
{}+\frac{1}{(z-1)^2z}\bigg(\frac{(\kappa_0+\chi_1-1)^2-\kappa_\infty^2}{4}(z-\lambda)-
 ( \lambda-1)^2\lambda\mu^2
\\ \qquad\qquad
{}-\big({-}\kappa_0(\lambda-1)^2-\chi_1\lambda(\lambda-1)+ t\lambda\big)\mu+\frac{(\lambda-1)^2\lambda\mu}{(z-\lambda)}\bigg).
\end{gather*}
Type Ap compatibility functions
\begin{gather*}
a(z)=\frac{(\lambda-1) z(z-1)}{t(z-\lambda)},\qquad
 b(z)=-\frac{(\lambda-1)^2\lambda\mu}{t(z-\lambda)}.
\end{gather*}
Painlev\'e $(2\un ;\un )$ Hamiltonian
\begin{gather}\nonumber
tH= (\lambda-1)^2\lambda\mu^2-\big(\kappa_0(\lambda-1)^2+(\chi_1-1) \lambda(\lambda-1)- t \lambda\big)\mu
\\ \hphantom{tH=}
{}+\frac{\big((\kappa_0+\chi_1-1)^2-\kappa_\infty^2\big)(\lambda-1)}{4}.\label{P5h}
\end{gather}
Painlev\'e $(2\un ;\un )$ equation:
\begin{gather}
\frac{\d^2\lambda}{\d t^2}= \bigg(\frac{1}{2\lambda}+
\frac{1}{\lambda-1}\bigg)\bigg(\frac{\d\lambda}{\d t}\bigg)^2-\frac{1}{t} \frac{\d \lambda}{\d t}+\frac{(\lambda-1)^2}{t^2}\bigg(\alpha \lambda+\frac{\beta}{\lambda}\bigg)
+\gamma\frac{\lambda}{t}-\frac{\lambda(\lambda+1)}{2(\lambda-1)},\label{P5}
\end{gather}
where
\begin{gather*}
\alpha=\frac{1}{2}\kappa_{\infty}^2,\qquad
\beta=-\frac{1}{2}\kappa_0^2,\qquad
\gamma=\chi_1.
\end{gather*}
According to the standard terminology,
(\ref{P5}) is the nondegenerate case of the Painlev\'e V equation.

\subsection[From Heun ($\frac32$\un ;\un ) to degenerate Painlev\'e V]
{From Heun $\boldsymbol{\big(\frac32\un ;\un \big)}$ to degenerate Painlev\'e V}
Set
\begin{gather*}
\sigma(z)=(z-1)^2z,
\\
\tau(z)=(z-1)z+(1-\kappa_0)(z-1)^2,
\\
\eta(z)=-\frac{ t}{(z-1)}+\frac{\big(\kappa_0^2-\kappa_\infty^2\big)z}{4}-c.
\end{gather*}
Heun $\big(\frac321;1\big)$ equation
(with the singularity of rank 2 put at 1):
\begin{gather*}
\partial_z^2+\bigg(\frac{1}{z-1}+\frac{1-\kappa_0}{z}\bigg)\partial_z
+\frac{1}{z(z-1)^2}\bigg({-}\frac{ t}{(z-1)}+\frac{(\kappa_0^2-\kappa_\infty^2)}{4}(z-1)-c\bigg).
\end{gather*}
Deformed Heun $\big(\frac32\un ;\un\big)$ equation:
\begin{gather*}
\partial_z^2+\bigg(\frac{1}{z\!-1}+\frac{1\!-\kappa_0}{z}-\frac{1}{z\!-\lambda}\bigg)\partial_z
 +\frac{1}{(z-1)^2z}\bigg({-}\frac{ t }{(z-1)}+\frac{t }{(\lambda-1)}+\frac{(\kappa_0^2-\kappa_\infty^2)}{4}(z-\lambda)
\\ \qquad
-\lambda(\lambda-1)^2\mu^2+
 \big(\lambda(\lambda-1)+\kappa_0(\lambda-1)^2\big)\mu
 +\frac{(\lambda-1)^2\lambda\mu}{(z-\lambda)}\bigg).
\end{gather*}
Type Aq compatibility functions:
\begin{gather*}
a(z)=\frac{(\lambda-1) z(z-1)}{t(z-\lambda)},\qquad
 b(z)=-\frac{\lambda(\lambda-1)^2\mu}{t(z-\lambda)}.
 \end{gather*}
Painlev\'e $\big(\frac32\un ;\un \big)$ Hamiltonian:
\begin{gather}
 tH=\lambda(\lambda-1)^2\mu^2-\kappa_0(\lambda-1)^2\mu+
\frac{(\kappa_0^2-\kappa_\infty^2)(\lambda-1)}{4}
-\frac{ t\lambda}{(\lambda-1)}. \label{P5dh}
 \end{gather}
Painlev\'e $(\frac32\un ;\un )$ equation:
\begin{gather}\label{P5d}
\frac{\d^2\lambda}{\d t^2}= \bigg(\frac{1}{2\lambda}+
\frac{1}{\lambda-1}\bigg)\bigg(\frac{\d\lambda}{\d t}\bigg)^2-\frac{1}{t} \frac{\d \lambda}{\d t}+\frac{(\lambda-1)^2}{t^2}\bigg(\alpha \lambda+\frac{\beta}{\lambda}\bigg)
-2\frac{\lambda}{t},
\end{gather}
where
\begin{gather*}
\alpha=\frac{1}{2}\kappa_{\infty}^2,\qquad
\beta=-\frac{1}{2}\kappa_0^2.
\end{gather*}
According to the standard terminology~(\ref{P5d}) is
the degenerate Painlev\'e V equation.

\subsection{From Heun (2;2) to non-degenerate Painlev\'e III$'$}

Set
\begin{gather*}
 \sigma(z)=z^2,\qquad
 \tau(z)=t+(2-\chi_0)z- z^2,\qquad
 \eta(z)=\frac{(\chi_0+\chi_\infty-1)z}{2}-c.
\end{gather*}
Heun $(2;2)$ equation:
\begin{gather*}
\partial_z^2+\bigg(\frac{t}{z^2}+\frac{2-\chi_0}{z}-1\bigg)\partial_z
 +\frac{(\chi_0+\chi_\infty-1)}{2z}-\frac{c}{z^2}.
\end{gather*}
Deformed Heun $(2;2)$ equation:
\begin{gather*}
\partial_z^2+ \bigg(\frac{ t}{z^2}+\frac{2-\chi_0}{z}-1-\frac{1}{z-\lambda}\bigg)\partial_z
\\ \qquad
{}+\frac{1}{z^2}\bigg(\frac{(\chi_0+\chi_\infty-1)}{2}(z-\lambda)
-\lambda^2\mu^2-\big(t-\chi_0\lambda-\lambda^2\big)\mu+\frac{\lambda^2\mu}{(z-\lambda)}\bigg).
\end{gather*}
Type Ap compatibility functions:
\begin{gather*}
a(z):=\frac{\lambda z}{t(z-\lambda)},\qquad b(z)=-\frac{\lambda^2\mu}{t(z-\lambda)}.
\end{gather*}
Painlev\'e $(2;2)$ Hamiltonian:
\begin{gather}\label{P3h}
t H:= \lambda^2\mu^2-\big(\lambda^2+(\chi_0-1) \lambda- t\big)\mu+\frac{1}{2}(\chi_0+\chi_{\infty}-1)\lambda.
\end{gather}
Painlev\'e $(2;2)$ equation:
\begin{gather}\label{P3p}
\frac{\d^2\lambda}{\d t^2}=\frac{1}{\lambda}\bigg(\frac{\d\lambda}{\d t}\bigg)^2-\frac{1}{t}\frac{\d\lambda}{\d t}+
\frac{\alpha\lambda^2}{4t^2}+\frac{\lambda^3}{t^2}+\frac{\beta}{4t}-\frac{1}{\lambda},
\end{gather}
where
\begin{gather*}
\alpha=-4\chi_{\infty},\qquad
\beta=4\chi_0.
\end{gather*}
According to the standard terminology
(\ref{P3p}) could be called
the nondegenerate Painlev\'e III$'$ equation.

\subsection[From Heun $\big(2;\frac32\big)$ to degenerate Painlev\'e III$'$]
{From Heun $\boldsymbol{\big(2;\frac32\big)}$ to degenerate Painlev\'e III$'$}

We~set
\begin{gather*}
\sigma(z):=z^2,\qquad
 \tau(z)=t+z(2-\chi_0),\qquad
 \eta(z)=\frac{ z}{2}-c.
\end{gather*}
 Heun $\big(2;\frac32\big)$ equation:
\begin{gather*}
\partial_z^2+\bigg(\frac{t}{z^2}+\frac{2-\chi_0}{z}\bigg)\partial_z
 +\frac{1}{z^2}\bigg(\frac{1}{2}z-c\bigg).
\end{gather*}
Deformed Heun $\big(2;\frac32\big)$ equation:
\begin{gather*}
\partial_z^2+\bigg(\frac{t}{z^2}+\frac{2-\chi_0}{z}-\frac{1}{z-\lambda}\bigg)\partial_z
 +\frac{1}{z^2}\bigg(\frac{(z-\lambda)}{2}-\lambda^2\mu^2
-\big(t-\chi_0\lambda\big)\mu +\frac{\lambda^2\mu}{(z-\lambda)}\bigg).
\end{gather*}
Type Ap compatibility functions
\begin{gather*}
a(z):=\frac{\lambda z}{t(z-\lambda)},\qquad b(z)=-\frac{\lambda^2\mu}{t(z-\lambda)}.
\end{gather*}
Painlev\'e $\big(2;\frac32\big)$ Hamiltonian:
\begin{gather} \label{P3p1H}
tH=\lambda^2\mu^2+\big(1-\chi_0)\lambda+t\big)\mu+\frac{\lambda}{2}.
\end{gather}
Painlev\'e $\big(2;\frac32\big)$ equation:
\begin{gather}
\label{P3p1}
\frac{\d^2\lambda}{\d t^2}=\frac{1}{\lambda}\bigg(\frac{\d\lambda}{\d t}\bigg)^2-\frac{1}{t}\frac{\d\lambda}{\d t}-
\frac{\lambda^2}{t^2}+\frac{\beta}{4t}-\frac{1}{\lambda},
\end{gather}
where
\begin{gather*}
\beta=4\chi_0.
\end{gather*}
According to the standard terminology~(\ref{P3p1}) is
one of the forms of the degenerate Painlev\'e III$'$ equation.

\subsection[From Heun $\big(\frac32;2\big)$ to degenerate Painlev\'e III$'$]
{From Heun $\boldsymbol{\big(\frac32;2\big)}$ to degenerate Painlev\'e III$'$}
Set
\begin{gather*}
 \sigma(z)=z^2,\qquad
 \tau(z)=- z^2+z,\qquad
 \eta(z)=\frac{t}{2z}-c+\frac{\chi_\infty z}{2}.
\end{gather*}
Heun $\big(\frac32;2\big)$ equation:
\begin{gather*}
 \partial_z^2+\bigg({-}1 +\frac1z\bigg)\partial_z+
 \frac{1}{z^2}\bigg(\frac{t}{2z}-c+\frac{\chi_\infty}{2}z\bigg).
\end{gather*}
Deformed Heun $\big(\frac32;2\big)$ equation:
\begin{gather*}
\partial_z^2+\bigg({-}1+\frac{1}{z}-\frac{1}{z-\lambda}\bigg)\partial_z
 +\frac{1}{z^2}\bigg(\frac{t}{2z}- \frac{t}{2\lambda}+\frac{\chi_\infty}{2}(z-\lambda)
 -\lambda^2\mu^2 +\big(\lambda^2+\lambda\big)\mu+\frac{\mu\lambda^2}{(z-\lambda)}\bigg).
\end{gather*}
Type Aq compatibility functions:
\begin{gather*}
a(z):=\frac{\lambda z}{t(z-\lambda)},\qquad b(z)=-\frac{\lambda^2\mu}{t(z-\lambda)}.
\end{gather*}
Painlev\'e $\big(\frac32;2\big)$ Hamiltonian
\begin{gather}\label{P3p2H}
 tH= \lambda^2\mu^2-\lambda^2\mu+\frac{\chi_\infty \lambda}{2}+\frac{t}{2\lambda}.
 \end{gather}
Painlev\'e $\big(\frac32;2\big)$ equation:
\begin{gather}\label{P3p2}
\frac{\d^2\lambda}{\d t^2}=\frac{1}{\lambda}\bigg(\frac{\d\lambda}{\d t}\bigg)^2-\frac{1}{t}\frac{\d\lambda}{\d t}+
\frac{\alpha\lambda^2}{4t^2}+\frac{\lambda^3}{t^2}+\frac{1}{t},
\end{gather}
where
\begin{gather*}
\alpha=-4\chi_{\infty}.
\end{gather*}
According to the standard terminology~(\ref{P3p1}) is
one of the forms of the degenerate Painlev\'e III$'$ equation.

\subsection[From Heun $\big(\frac32;\frac32\big)$ to doubly degenerate Painlev\'e III$'$]
{From Heun $\boldsymbol{\big(\frac32;\frac32\big)}$ to doubly degenerate Painlev\'e III$'$}
We~set
\begin{gather*}
 \sigma(z)=z^2,\qquad
 \tau(z)=2z,\qquad
 \eta(z)=\frac{z}{2}-c+\frac{t}{2z}.
\end{gather*}
Heun $\big(\frac32;\frac32\big)$ equation
\begin{gather*}
 \partial_z^2+\frac2z\partial_z
 +\frac{1}{z^2}\Big(\frac{1}{2}z-c+\frac{t}{2z}\Big).
 \end{gather*}
Deformed Heun $\big(\frac32;\frac32\big)$ equation:
\begin{gather*}
\partial_z^2+\bigg(\frac{2}{z}-\frac{1}{z-\lambda}\bigg)\partial_z
 +\frac{1}{z^2}\bigg(\frac{t}{2z} - \frac{t}{2\lambda} +\frac{1}{2}(z-\lambda)
 -\lambda^2\mu^2 +\frac{\mu\lambda^2}{(z-\lambda)}\bigg).
\end{gather*}
Type Aq compatibility functions:
\begin{gather*}
a(z):=\frac{\lambda z}{t(z-\lambda)},\qquad
b(z)=-\frac{\lambda^2\mu}{t(z-\lambda)}.
\end{gather*}
Painlev\'e $\big(\frac32;\frac32\big)$ Hamiltonian
\begin{gather}\label{P3pph}
 tH= \lambda^2\mu^2+\lambda\mu+\frac{\lambda}{2}+\frac{t}{2\lambda}.
\end{gather}
Painlev\'e $\big(\frac32;\frac32\big)$ equation:
\begin{gather}\label{P3pp}
\frac{\d^2\lambda}{\d t^2}=\frac{1}{\lambda}\bigg(\frac{\d\lambda}{\d t}\bigg)^2-\frac{1}{t}\frac{\d\lambda}{\d t}-
\frac{\lambda^2}{t^2}+\frac{1}{t}.
\end{gather}
Following the standard terminology
(\ref{P3pp}) could be called the doubly degenerate Painlev\'e III$'$ equation.

\subsection{From Heun (\un;3) to Painlev\'e IV}

Set
\begin{gather*}
 \sigma(z)=z,\qquad
 \tau(z)=1-\kappa_0-tz-\frac{z^2}{2},\qquad
 \eta(z)=\frac{\theta_\infty z}{2}-c.
\end{gather*}
Heun $(\un ;3)$ equation
\begin{gather*}
 \partial_z^2+\bigg(\frac{1-\kappa_0}{z}-t-\frac{z}{2}\bigg)\partial_z+
 \frac{1}{z}\bigg( \frac{\theta_\infty}{2}z-c\bigg).
 \end{gather*}
Deformed Heun $(\un ;3)$ equation
\begin{gather*}
\partial_z^2\!+ \bigg(\frac{1\!-\kappa_0}{z}\!-t\!-\frac{z}{2}\!-\frac{1}{z\!-\lambda}\bigg)\partial_z
\!+\frac{1}{z}\bigg(\frac{\theta_\infty(z\!-\lambda)}{2}\!-\lambda\mu^2 \!+\bigg(\frac{1}{2}\lambda^2\!+t\lambda\!+\kappa_0\bigg)\mu\!+\frac{\lambda\mu}{(z\!-\lambda)}\bigg).
\end{gather*}
Type Bp compatibility functions:
\begin{gather*}
a(z):=\frac{2 z}{(z-\lambda)},\qquad b(z)=-\frac{2\lambda\mu}{(z-\lambda)}.
\end{gather*}
Painlev\'e $(\un ;3)$ Hamiltonian:
\begin{gather}\label{P4h}
H= 2\lambda\mu^2-(\lambda^2+2t \lambda+2\kappa_0)\mu+\theta_{\infty}\lambda.
\end{gather}
Painlev\'e $(\un ;3)$ equation
\begin{gather}\label{P4}
\frac{\d^2\lambda}{\d t^2}= \frac{1}{2\lambda} \bigg(\frac{\d\lambda}{\d t}\bigg)^2+\frac{3}{2}\lambda^3+4t \lambda^2+2\big(t^2-\alpha\big)\lambda+\frac{\beta}{\lambda},
\end{gather}
where
\begin{gather*}
\alpha=-\kappa_0+2\theta_{\infty}-1,\qquad
\beta=-2\kappa_0^2.
\end{gather*}
In~the standard terminology~(\ref{P4}) is called
the Painlev\'e IV equation.

\subsection[From Heun (\un;$\frac{5}{2}$) to Painlev\'e 34]
{From Heun $\boldsymbol{\big(\un;\frac{5}{2}\big)}$ to Painlev\'e 34}

Set
\begin{gather*}
 \sigma(z)=z,\qquad
 \tau(z)=1-\kappa_0,\qquad
 \eta(z)= -\frac12z^2-\frac{1}{2}tz-c.
\end{gather*}
Heun $\big(\un ;\frac{5}{2}\big)$ equation:
\begin{gather*}
\partial_z^2+\frac{1-\kappa_0}{z}\partial_z+\frac{1}{z}\bigg({-}\frac{1}{2}z^2-\frac{tz}{2}-c\bigg).
\end{gather*}
Deformed Heun $\big(\un ;\frac{5}{2}\big)$ equation:
\begin{gather*}
 \partial_z^2+\bigg(\frac{1-\kappa_0}{z}-\frac{1}{z-\lambda}\bigg)
 +\frac{1}{z}\bigg({-}\frac{z^2}{2}+\frac{\lambda^2}{2}-\frac{t}{2}(z-\lambda) -\lambda\mu^2+\kappa_0\mu+\frac{\lambda\mu}{z-\lambda}\bigg).
\end{gather*}
Type Bq compatibility functions:
\begin{gather*}
a(z)=\frac{z}{z-\lambda},\qquad
b(z)=-\frac{\lambda\mu}{z-\lambda}.
\end{gather*}
Painlev\'e $\big(\un ;\frac{5}{2}\big)$ Hamiltonian
\begin{gather}\label{P34h}
 H=\lambda\mu^2-\kappa_0\mu-\frac{\lambda^2}{2}-\frac{t\lambda}{2}.
\end{gather}
Painlev\'e $\big(\un ;\frac{5}{2}\big)$ equation:
\begin{gather}
\frac{\d^2\lambda}{\d t^2}=\frac{1}{2\lambda}\Big(\frac{\d\lambda}{\d t}\Big)^2+2\lambda^2+t\lambda-\frac{\alpha}{2\lambda},\label{P34}
\end{gather}
where $\alpha=\kappa_0^2$.

According to \cite{Ohyama}, the standard name
for~(\ref{P34}) is the Painlev\'e 34 equation.

\subsection{From Heun (;4) to Painlev\'e II}

Set
\begin{gather*}
 \sigma(z)=1,\qquad
 \tau(z)=-2z^2-t,\qquad
 \eta(z)=-(2\alpha+1)z-c.
\end{gather*}
Heun $(;4)$ equation:
\begin{gather*}
 \partial_z^2-\big(2z^2+t\big)\partial_z-(2\alpha+1)z-c.
\end{gather*}
Deformed Heun $(;4)$ equation:
\begin{gather*}
\partial_z^2+\bigg({-}2z^2-t-\frac{1}{z-\lambda}\bigg)\partial_z
-(2\alpha+1)(z-\lambda)-\mu^2+\big(2\lambda^2+t\big)\mu+\frac{\mu}{z-\lambda}.
\end{gather*}
Type Bp compatibility functions scaled with $\epsilon=-1$:
\begin{gather*}
a(z):=\frac{1}{2(z-\lambda)},\qquad
b(z)=-\frac{\mu}{2(z-\lambda)}.
\end{gather*}
Painlev\'e $(;4)$ Hamiltonian:
\begin{gather}
 \label{H2}
H=\frac{1}{2}\mu^2-\bigg(\lambda^2+\frac{t}{2}\bigg)\mu-\bigg(\alpha+\frac{1}{2}\bigg)\lambda.
\end{gather}
Painlev\'e $(;4)$ equation:
\begin{gather}\label{P2}
\frac{\d^2\lambda}{\d t^2}= 2\lambda^3+t \lambda+\alpha.
\end{gather}
According to the standard terminology~(\ref{P2}) is called
the Painlev\'e II equation

\subsection[From Heun (;$\frac72$) to Painlev\'e I]
{From Heun $\boldsymbol{\big(;\frac72\big)}$ to Painlev\'e I}\label{painleve1}

Set
\begin{gather*}
 \sigma(z)=1,\qquad
 \tau(z)=0,\qquad
 \eta(z)=-4z^3-2tz-c.
\end{gather*}
Heun $\big(;\frac72\big)$ equation:
\begin{gather*}
 \partial_z^2-4z^3-2tz-c.
\end{gather*}
Deformed Heun $\big(;\frac72\big)$ equation:
\begin{gather*} \partial_z^2-\frac{1}{(z-\lambda)}\partial_z-4z^3-2tz+4\lambda^3+2t\lambda-\mu^2+\frac{\mu}{(z-\lambda)}.
\end{gather*}
Type Bq compatibility functions scaled with $\epsilon=-6$:
\begin{gather*}
a(z):=\frac{1}{2(z-\lambda)},\qquad
b(z)=-\frac{\mu}{2(z-\lambda)}.
\end{gather*}
Painlev\'e $\big(;\frac72\big)$ Hamiltonian:
\begin{gather}
H=\frac{1}{2}\mu^2-2\lambda^3-t\lambda.\label{P1.h}
\end{gather}
Painlev\'e $\big(;\frac72\big)$ equation:
\begin{gather}\label{P1.}
\frac{\d^2\lambda}{\d t^2}=6\lambda^2+t.
 \end{gather}
In~the standard terminology~(\ref{P1.}) is called the Painlev\'e I equation.

\section{Five supertypes of Painlev\'e equation}\label{FivesupertypesofPainleveequation}

\subsection{Overview of five supertypes}

Recall that the ten types of the Heun class equation can be grouped into
five supertypes, as described in Section~\ref{Classification of Heun class equations}.
The ten types of Painlev\'e equation
can be also grouped into five supertypes. There is an exact correspondence between the supertypes of Heun class and Painlev\'e equations:

\begin{itemize}\itemsep=0pt
\item Painlev\'e VI or $(\un \un \un \un )$.
\item Painlev\'e V or $(\un \un \underline2)$.
 \begin{itemize}\itemsep=0pt
 \item Painlev\'e ndeg-V or $(\un \un 2)$.
 \item Painlev\'e deg-V or $\big(\un \un \frac32\big)$.
 \end{itemize}
\item Painlev\'e III$'$ or $(\underline2\underline2)$.
 \begin{itemize}\itemsep=0pt \item Painlev\'e ndeg-III$'$ or $(22)$.
 \item Painlev\'e deg-III$'$ or $\big(\frac322\big)$.
 \item Painlev\'e ddeg-III$'$ or $\big(\frac32\frac32\big)$.
 \end{itemize}
 \item Painlev\'e IV-34 or $(\un \underline3)$.
\begin{itemize}\itemsep=0pt \item Painlev\'e IV or $(\un 3)$.
 \item Painlev\'e 34 or $\big(\un \frac52\big)$.
\end{itemize}
 \item Painlev\'e I--II or $(\underline4)$.
\begin{itemize}\itemsep=0pt \item Painlev\'e II or $(4)$.
 \item Painlev\'e I or $\big(\frac72\big)$.
\end{itemize}
\end{itemize}

In~what follows we discuss this classification.
We~describe the minimal set of parameters that can be used in a given type and various equivalences. In~our discussion we
try to include the Hamiltonian aspect, whenever it is possible.

The above classification of Painlev\'e equation was pointed out by Ohyama--Okumura, see the beginning of Section 2 of \cite{Ohyama}.
(In~that reference the authors use the word ``type'' both for what we call ``supertype'' and ``type''.)
The discussion in \cite{Ohyama}, however,
concentrated on the second order equations. Less space was devoted to
 the
Hamiltonian form of the five supertypes.

The first supertype is Painlev\'{e} VI or $(\un \un \un \un )$, which contains only one type. All the four remaining supertypes contain at least two types. We~discuss them in the following subsections.

 In~each of the following subsections we start with a general form of
the given supertype of the Painlev\'e equation. It will be
indicated by $\diamond$. It always has the form
\begin{gather*}
\frac{\d^2\lambda}{\d t^2}=F\bigg(t,\frac{\d\lambda}{\d t},\lambda\bigg).
\end{gather*}
The corresponding differential (nonlinear) operator will be denoted
\begin{gather*}
P(t,\lambda):=\frac{\d^2\lambda}{\d t^2}-F\bigg(t,\frac{\d\lambda}{\d t},\lambda\bigg).
\end{gather*}
We~also introduce the corresponding Hamiltonian~$H(t,\lambda,\mu).$
Both $P(t,\lambda)$ and $H(t,\lambda,\mu)$ depend on several parameters, put as subscripts.
Next we give the scaling properties of the equation and the Hamiltonian.

Then we list various nontrivial types that belong to a given
supertype, marking them with $\ast$.

Finally, we discuss the relationship between various types. In~particular, show how to reduce the number of parameters using scaling.

Each supertype contains one generic type, which we call non-degenerate.
Besides, it may contain one or more degenerate types. The Hamiltonian that covers the non-degenerate type, does not always allow us to describe all
types that belong to a given supertype.
This can be viewed as a drawback of the Hamiltonian approach.

\subsection{Painlev\'e V or (\un \un \underline{2})}\label{PainleveV}

As noted in \cite{Ohyama}, the usual form of the Painlev\'e V equation, depending on 4 parameters, should be treated not as a single type, but as a supertype. It is invariant with respect to a scaling transformation.
It includes two nontrivial types: nondegenerate V depending on 3 parameters and degenerate V depending on 2 parameters. There exists also a trivial type, solvable in quadrature.

In~this subsection we discuss the supertype Painlev\'e V in detail. Note that we prefer to denote it by $(\un \un \underline{2})$, since it corresponds to
the supertype $(\un \un \underline{2})$ of the Heun class.

\begin{itemize}\itemsep=0pt
\item[$\diamond$] Painlev\'e V or $(\un \un \underline{2})$ equation and Hamiltonian
\begin{gather}
\frac{\d^2\lambda}{\d t^2}= \bigg(\frac{1}{2\lambda}\!+
\frac{1}{\lambda\!-1}\bigg)\bigg(\frac{\d\lambda}{\d t}\bigg)^2-\frac{1}{t} \frac{\d \lambda}{\d t}+\frac{(\lambda\!-1)^2}{t^2}\bigg(\alpha \lambda+\frac{\beta}{\lambda}\bigg)
+\gamma\frac{\lambda}{t}+\delta\frac{\lambda(\lambda+1)}{\lambda-1},\!\!
\label{painleve5}
\\
\nonumber
tH= (\lambda-1)^2\lambda\mu^2-\big(\kappa_0(\lambda-1)^2+(\chi_1-1) \lambda(\lambda-1)-\eta t \lambda\big)\mu
\\ \hphantom{\frac{\d^2\lambda}{\d t^2}=}
{}+\frac{\big((\kappa_0+\chi_1-1)^2-\kappa_\infty^2\big)(\lambda-1)}{4},
\label{PH5}
\end{gather}
where $\alpha=\frac{1}{2}\kappa_{\infty}^2$,
$\beta=-\frac{1}{2}\kappa_0^2$, $\gamma=\chi_1\eta$, $\delta=-\frac{1}{2}\eta^2$.\vspace{1ex}

Scaling properties
\begin{gather*}
 \epsilon^2 P_{\alpha,\beta,\gamma,\delta}(\epsilon t,\lambda)
 = P_{\alpha,\beta,\epsilon\gamma,\epsilon^2\delta}(t,\lambda),
 \\
\epsilon H_{\kappa_0,\kappa_\infty,\chi_1,\eta}(\epsilon t,\lambda,\mu)=
H_{\kappa_0,\kappa_\infty,\chi_1,\frac\eta\epsilon}( t,\lambda,\mu).
\end{gather*}

\item[$\ast$] Painlev\'e ndeg-V or $(2\un \un )$ equation and Hamiltonian recalled from~(\ref{P5}) and~(\ref{P5h}):
\begin{gather*}
\frac{\d^2\lambda}{\d t^2}= \bigg(\frac{1}{2\lambda}+
\frac{1}{\lambda-1}\bigg)\bigg(\frac{\d\lambda}{\d t}\bigg)^2-\frac{1}{t} \frac{\d \lambda}{\d t}+\frac{(\lambda-1)^2}{t^2}\bigg(\alpha \lambda+\frac{\beta}{\lambda}\bigg)
+\gamma\frac{\lambda}{t}-\frac{\lambda(\lambda+1)}{2(\lambda-1)},
\\
tH= (\lambda-1)^2\lambda\mu^2-\big(\kappa_0(\lambda-1)^2+(\chi_1-1) \lambda(\lambda-1)- t \lambda\big)\mu
\\ \hphantom{tH= }
{}+\frac{\big((\kappa_0+\chi_1-1)^2-\kappa_\infty^2\big)(\lambda-1)}{4},
\end{gather*}
where $\alpha=\frac{1}{2}\kappa_{\infty}^2$, $\beta=-\frac{1}{2}\kappa_0^2$, $\gamma=\chi_1$.\vspace{1ex}

\item[$\ast$] Painlev\'e deg-V or $\big(\frac32\un \un \big)$ equation and Hamiltonian, recalled from~(\ref{P5d}) and~(\ref{P5dh}):
\begin{gather}
\frac{\d^2\lambda}{\d t^2}= \bigg(\frac{1}{2\lambda}+
\frac{1}{\lambda-1}\bigg)\bigg(\frac{\d\lambda}{\d t}\bigg)^2-\frac{1}{t} \frac{\d \lambda}{\d t}+\frac{(\lambda-1)^2}{t^2}\bigg(\alpha \lambda+\frac{\beta}{\lambda}\bigg)-2\frac{\lambda}{t},\nonumber
\\
 tH=\lambda(\lambda-1)^2\mu^2-\kappa_0(\lambda-1)^2\mu+
\frac{(\kappa_0^2-\kappa_\infty^2)(\lambda-1)}{4}-\frac{ t\lambda}{(\lambda-1)},
\label{P5dh-}
\end{gather}
where
$\alpha=\frac{1}{2}\kappa_{\infty}^2$, $\beta=-\frac{1}{2}\kappa_0^2$.\vspace{1ex}
\end{itemize}

Let us discuss special cases:
\begin{itemize}\itemsep=0pt
\item Let
 $\delta\neq0$. In~the Hamiltonian form it corresponds to $\eta\neq0$.
 By scaling we can set $\delta=-\frac12$, and in the Hamiltonian form $\eta=1$.
 We~obtain the Painlev\'e $(\un\un2)$ equation and Hamiltonian.
\item Let $\delta=0$, $\gamma\neq0$.
 By scaling we can set $\gamma=-2$.
 We~obtain the Painlev\'e $\big(\un \un \frac32\big)$ equation.

 However, on the Hamiltonian level this reduction does not work: we cannot directly use~(\ref{PH5}) to obtain the Painlev\'e $\big(\un \un \frac32\big)$ Hamiltonian.
\item Let $\delta=0$, $\gamma=0$. On the Hamiltonian level, $\eta=0$. The Hamiltonian becomes
\begin{gather}\nonumber
 tH= (\lambda-1)^2\lambda\mu^2-\big(\kappa_0(\lambda-1)^2+(\chi_1-1) \lambda(\lambda-1)\big)\mu
 \\ \hphantom{ tH=}
 {}+\frac{\big((\kappa_0+\chi_1-1)^2-\kappa_\infty^2\big)(\lambda-1)}{4}.\label{PH5-}
 \end{gather}
This case is solvable in quadratures by the method of Section~\ref{hammo1}.

Note that the corresponding 2nd order equation does not depend on the parameter $\chi_1$. On the Hamiltonian level it can be seen by using the canonical transformation
$\tilde\mu=\mu-\frac{\chi_1-1}{2(\lambda-1)}$, which transforms~(\ref{PH5-}) into
\begin{gather*}
 tH=(\lambda-1)^2\lambda\tilde\mu^2-\kappa_0(\lambda-1)^2\tilde\mu +\frac{(\kappa_0^2-\kappa_\infty^2)(\lambda-1)}{4}-\frac{(\chi_1-1)^2}{4},
 \end{gather*}
where the dependence on $\chi_1$ remains only in the free term.
\end{itemize}

\begin{Remark}
It is well known that
 the Painlev\'e deg-V or $\big(\un \un \frac32\big)$ and ndeg-III$'$ or $(22)$ equations are equivalent~\cite{Ohyama}. Below we will show this by describing a canonical transformation that connects the corresponding Hamiltonians.

Let us insert the canonical transformation
\begin{gather*}
 \tilde\lambda=1-\frac1\mu,\quad\
 \mu=\frac{1}{1-\tilde\lambda},\quad\
 \tilde\mu=\mu^2\lambda-\frac{(\chi_0-1)\mu}{2},\quad\
 \lambda =\big(1-\tilde\lambda\big)^2\tilde\mu+\frac{(\chi_0-1)}{2}\big(1-\tilde\lambda\big),
\end{gather*}
into the $(22)$ Hamiltonian~(\ref{P3h-}). We~obtain
\begin{gather*}
tH= \tilde\lambda\big(\tilde\lambda\!-1\big)^2\tilde\mu^2\! +\!\frac{-\chi_0\!+\chi_\infty\!+1}{2}\big(\tilde\lambda\!-1\big)^2\tilde\mu-
\frac{\chi_\infty(\chi_0\!-1)}{2}\big(\tilde\lambda\!-1\big)
\!-\!\frac{ t\tilde\lambda}{\big(\tilde\lambda\!-1\big)}
\!-\!\frac{(\chi_0\!-1)^2}{4}\!+t,
\end{gather*}
which after appropriate identification of parameters coincides with
the $\big(\un \un \frac32\big)$ Hamiltonian~(\ref{P5dh-}) up to a free term.
\end{Remark}

\subsection{Painlev\'e III$'$ or (\underline{2}\underline{2})}
\label{PainleveIII'}

As noted in \cite{Ohyama},
the usual Painlev\'e III$'$ equation, depending on 4 parameters, should be treated as a supertype. It is invariant with respect to two distinct scaling transformations. It includes 3 nontrivial types: nondegenerate III$'$ depending on 2 parameters, degenerate III$'$ (in two forms) depending on 1 parameter, doubly degenerate III$'$ with no parameters. There are also trivial forms solvable in quadratures.

 In~this subsection we discuss Painlev\'e III$'$ in detail.
We~prefer to denote it by the sym\-bol~$(\underline2\underline2)$, because it corresponds to the supertype $(\underline2\underline2)$ of the Heun class.

\begin{itemize}\itemsep=0pt
\item[$\diamond$] Painlev\'e III$'$ or $(\underline{2}\underline{2})$ equation and Hamiltonian:
 \begin{gather}
\frac{\d^2\lambda}{\d t^2}=\frac{1}{\lambda}\bigg(\frac{\d\lambda}{\d t}\bigg)^2-\frac{1}{t}\frac{\d\lambda}{\d t }+
\frac{\alpha\lambda^2+\gamma\lambda^3}{4t^2}+\frac{\beta}{4t}+\frac{\delta}{4\lambda},\nonumber
\\
 t H= \lambda^2\mu^2-\big(\eta_{\infty}\lambda^2+(\chi_0-1) \lambda-\eta_0 t\big)\mu+\frac{1}{2}\eta_{\infty}(\chi_0+\chi_{\infty}-1)\lambda,
\label{painleve3} 
\end{gather}
where
$\alpha=-4\eta_{\infty}\chi_{\infty}$, $\beta=4\eta_0\chi_0$, $\gamma=4\eta_{\infty}^2$, $\delta=-4\eta_0^2$.
Scaling properties
\begin{gather*}
 \frac{ \epsilon^{2}}{\omega} P_{\alpha,\beta,\gamma,\delta}(\epsilon t,\omega\lambda) = P_{\omega\alpha,\frac{\epsilon}{\omega}\beta,\omega^2\gamma,\frac{\epsilon^2}{\omega^2}\delta}(t,\lambda),
 \\
\epsilon H_{\eta_0,\eta_\infty,\chi_0,\chi_\infty}\bigg(\epsilon t,\omega\lambda,\frac{\mu}{\omega}\bigg)
= H_{\frac{\epsilon}{\omega}\eta_0,\omega\eta_\infty,\chi_0,\chi_\infty}( t,\lambda,\mu).
\end{gather*}

\item[$\ast$] Painlev\'e ndeg-III$'$ or $(22)$ equation and Hamiltonian, recalled from~(\ref{P3h}) and~(\ref{P3p}):
\begin{gather}
\frac{\d^2\lambda}{\d t^2}=\frac{1}{\lambda}\bigg(\frac{\d\lambda}{\d t}\bigg)^2-\frac{1}{t}\frac{\d\lambda}{\d t}+
\frac{\alpha\lambda^2}{4t^2}+\frac{\lambda^3}{t^2}+\frac{\beta}{4t}-\frac{1}{\lambda},\nonumber
\\ \label{P3h-}
t H:= \lambda^2\mu^2-\big(\lambda^2+(\chi_0-1) \lambda- t\big)\mu+\frac{1}{2}(\chi_0+\chi_{\infty}-1)\lambda,
\end{gather}
where
$\alpha=-4\chi_{\infty}$, $\beta=4\chi_0$.

\item[$\ast$] Painlev\'e deg-III$'$-1 or $\big(2;\frac32\big)$ equation and Hamiltonian, recalled from~(\ref{P3p1}) and~(\ref{P3p1H}):
\begin{gather*}
\frac{\d^2\lambda}{\d t^2}=\frac{1}{\lambda}\bigg(\frac{\d\lambda}{\d t}\bigg)^2-\frac{1}{t}\frac{\d\lambda}{\d t}-
\frac{\lambda^2}{t^2}+\frac{\beta}{4t}-\frac{1}{\lambda},\nonumber
\\
tH=\lambda^2\mu^2+\big((1-\chi_0)\lambda+t\big)\mu+\frac{\lambda}{2},
 \end{gather*}
where $\beta=4\chi_0$.

\item[$\ast$] Painlev\'e deg-III$'$-2 or $\big(\frac32;2\big)$ equation and Hamiltonian recalled from
(\ref{P3p2}) and~(\ref{P3p2H}):
\begin{gather*}
\frac{\d^2\lambda}{\d t^2}=\frac{1}{\lambda}\bigg(\frac{\d\lambda}{\d t}\bigg)^2-\frac{1}{t}\frac{\d\lambda}{\d t}+
\frac{\alpha\lambda^2}{4t^2}+\frac{\lambda^3}{t^2}+\frac{1}{t},
\\
 tH= \lambda^2\mu^2-\lambda^2\mu+\frac{\chi_\infty \lambda}{2}+\frac{t}{2\lambda},
 \end{gather*}
where
$\alpha=-4\chi_{\infty}.$

\item[$\ast$] Painlev\'e ddeg-III$'$ or $\big(\frac32\frac32\big)$ equation and
Hamiltonian, recalled from~(\ref{P3pp}) and~(\ref{P3pph}):
\begin{gather*}
\frac{\d^2\lambda}{\d t^2}=\frac{1}{\lambda}\bigg(\frac{\d\lambda}{\d t}\bigg)^2-\frac{1}{t}\frac{\d\lambda}{\d t}-\frac{\lambda^2}{t^2}+\frac{1}{t},
\\
 tH= \lambda^2\mu^2+\lambda\mu+\frac{\lambda}{2}+\frac{t}{2\lambda}.
\end{gather*}
\end{itemize}

Let us discuss special cases:
\begin{itemize}\itemsep=0pt
\item
 Let
$\delta\neq0$, $\gamma\neq0$. On the level of the Hamiltonian it means $\eta_0\neq0$, $\eta_\infty\neq0$. By scaling we
can set on the Hamiltonian level $\eta_0=1$, $\eta_\infty=1$, which corresponds to $\gamma=4$, $\delta=-4$. We~obtain the Painlev\'e ndeg-III$'$ or $(22)$
equation~(\ref{P3p}) and Hamiltonian~(\ref{P3h}).
\item Let $\delta,\alpha\neq0$, $\gamma=0$. By scaling we can make $\delta=-4$, $\alpha=-4$.
 We~obtain the Painlev\'e $\big(2;\frac32\big)$ equation.
 This reduction does not work for the
 Painlev\'e $\big(2;\frac32\big)$
 Hamiltonian.
\item Let $\delta=0$, $\gamma,\beta\neq0$. By scaling we can make $\gamma=4$,
 $\beta=4$.
 We~obtain the Painlev\'e $\big(\frac32;2\big)$ equation.
 This reduction does not work for the Painlev\'e $\big(\frac32;2\big)$ Hamiltonian.

 The equations $\big(\frac32;2\big)$ and $\big(2;\frac32\big)$ are two equivalent forms of $\big(2\frac32\big)$ see Proposition~\ref{equi}.
\item
 Let $\delta=\gamma=0$, $\alpha,\beta\neq0$. By scaling we can make
 $\alpha=-4$, $\beta=4$. We~obtain the Pain\-lev\'e $\big(\frac32\frac32\big)$ equation~(\ref{P3pp}). This reduction does not work for the Painlev\'e $\big(\frac32\frac32\big)$ Hamiltonian~(\ref{P3pph}).
\item Let $\alpha=\gamma=0$. On the Hamiltonian level, $\eta_\infty=0$.
 The Hamiltonian is
 \begin{gather}\label{p3sol}
 t H= \lambda^2\mu^2-\big((\chi_0-1) \lambda-\eta_0 t\big)\mu.
 \end{gather}
 We~can apply to~(\ref{p3sol}) the time-dependent canonical transformation
 \begin{gather}\label{time}
 \tilde H=H-\frac{\tilde\lambda\tilde\mu}{t},\qquad
 \tilde\lambda=\frac{\lambda}{t},
\quad \tilde\mu=t\mu,
\end{gather}
 obtaining
 \begin{gather*}
 t\tilde H= \tilde\lambda^2\tilde\mu^2-\big(\chi_0\tilde\lambda-\eta_0\big)\tilde\mu.
 \end{gather*}
 It is solvable by quadratures by Section~\ref{hammo1}.
\item
Let $\beta=\delta=0$. On the Hamiltonian level it corresponds to $\eta_0=0$. The Hamiltonian is
 \begin{gather*}
t H= \lambda^2\mu^2-\big(\eta_{\infty}\lambda^2+(\chi_0-1) \lambda\big)\mu+\frac{1}{2}\eta_{\infty}(\chi_0+\chi_\infty-1)\lambda.
 \end{gather*}
 It is solvable by quadratures by Section~\ref{hammo1}.
\end{itemize}

\begin{Proposition}\label{equi}
The Painlev\'e $\big(2;\frac32\big)$ and $\big(\frac32;2\big)$ equations are equivalent.
\end{Proposition}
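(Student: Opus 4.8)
The plan is to prove the equivalence by an explicit birational change of the dependent variable, with the time $t$ left untouched, namely $\lambda\mapsto t\lambda^{-1}$. Concretely, I would set $u:=t/\lambda$ and show that $\lambda=\lambda(t)$ solves the Painlev\'e $\big(2;\frac32\big)$ equation~\eqref{P3p1} if and only if $u=u(t)$ solves the Painlev\'e $\big(\frac32;2\big)$ equation~\eqref{P3p2}, under the identification $\chi_{\infty}=\chi_{0}$ of parameters (equivalently $\alpha=-\beta$ once both equations are written in the normalized form~\eqref{painleve3}). Since the inverse substitution is again $\lambda\mapsto t\lambda^{-1}$, this is a genuine equivalence of the two one-parameter families, not merely a one-way reduction.

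The verification is a routine chain-rule computation. From $\lambda=t/u$ one has
\[
\frac{\d\lambda}{\d t}=\frac{1}{u}-\frac{t}{u^{2}}\frac{\d u}{\d t},\qquad
\frac{\d^{2}\lambda}{\d t^{2}}=-\frac{2}{u^{2}}\frac{\d u}{\d t}-\frac{t}{u^{2}}\frac{\d^{2}u}{\d t^{2}}+\frac{2t}{u^{3}}\Bigl(\frac{\d u}{\d t}\Bigr)^{2}.
\]
Substituting these into~\eqref{P3p1}, every term on the right becomes a Laurent monomial in $u$ with coefficients involving $t$ and $\d u/\d t$. Collecting, multiplying the rearranged equation through by $-u^{2}/t$, and simplifying: the two $\frac{1}{tu}$ contributions (one from $(\lambda')^{2}/\lambda$, one from $-\lambda'/t$) cancel; the first-derivative-squared terms combine into $\frac{1}{u}(\d u/\d t)^{2}$; the term $-\lambda^{2}/t^{2}=-u^{-2}$ yields the inhomogeneous term $1/t$; the term $-1/\lambda=-u/t$ yields $u^{3}/t^{2}$; and $\beta/(4t)$ yields $-\beta u^{2}/(4t^{2})$. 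The result is precisely~\eqref{P3p2} with $\alpha=-\beta$, i.e.\ $\chi_{\infty}=\chi_{0}$.

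It is worth recording the conceptual origin of this map: it descends from the involution $z\mapsto z^{-1}$ that swaps the two singular points of the deformed Heun class equations underlying the two sides, interchanging the rank-$\frac32$ singularity with the rank-$2$ singularity (that is, $0\leftrightarrow\infty$); this is the very symmetry used in the proof of Theorem~\ref{heuni} to pass between the normal forms $\big(2;\frac32\big)$ and $\big(\frac32;2\big)$. Under this inversion the extra non-logarithmic singularity sitting at $\lambda$ is carried to a point proportional to $\lambda^{-1}$, the proportionality factor $t$ merely reconciling the different scalings of the two normal forms and the two choices of time variable (time appearing inside $\tau$ on the $\big(2;\frac32\big)$ side, inside $\eta$ on the $\big(\frac32;2\big)$ side). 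There is no serious obstacle; the only point requiring care is the bookkeeping of powers of $t$, so that the cubic term acquires the coefficient $t^{-2}$ while the inhomogeneous term keeps the coefficient $t^{-1}$, together with the sign, which turns $\beta$ into $\alpha=-\beta$. If one also wanted the statement at the Hamiltonian level, the same map lifts to the time-dependent canonical transformation $\tilde\lambda=t/\lambda$, $\tilde\mu=-\lambda^{2}\mu/t$ (symplectic for each fixed $t$) together with the appropriate generating-function correction to $H$; but for the proposition as stated, which concerns only the second-order equations, the direct substitution above suffices.
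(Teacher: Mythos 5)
Your proof is correct, and it takes a genuinely different route from the paper. The paper establishes the equivalence at the Hamiltonian level: it first applies the time-dependent canonical transformation~(\ref{time}) to the Painlev\'e $\big(2;\frac32\big)$ Hamiltonian~(\ref{P3p1H}), then a time-independent canonical map $\tilde\lambda=\frac1\lambda$, $\tilde\mu=-\mu\lambda^2+\frac{\chi_0\lambda}{2}$, and observes that the result is the $\big(\frac32;2\big)$ Hamiltonian~(\ref{P3p2H}) with $\chi_\infty=\chi_0$ up to the additive constant $-\frac{\chi_0^2}{4}$. You instead work directly with the second-order equations, substituting $\lambda=t/u$ into~(\ref{P3p1}) and checking by the chain rule that $u$ satisfies~(\ref{P3p2}) with $\alpha=-\beta$, i.e., $\chi_\infty=\chi_0$; I verified the bookkeeping (the cancellation of the $\frac{1}{tu}$ terms, the surviving $\frac{u'}{u^2}$ giving $-\frac{u'}{t}$, and the images of $-\lambda^2/t^2$, $-1/\lambda$, $\beta/(4t)$) and it is right, and since the substitution is an involution and each algebraic step is reversible, this is indeed a two-way equivalence of the one-parameter families. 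Your map is exactly the $\lambda$-projection of the paper's composite canonical transformation ($\lambda\mapsto\lambda/t\mapsto t/\lambda$), so the two arguments are consistent; yours is more elementary and proves the proposition precisely as stated, while the paper's buys the extra information of how the Hamiltonians themselves correspond (up to a $\lambda,\mu$-independent term), which matters for the paper's Hamiltonian framework. One small caveat on your closing aside: the naive symplectic lift $\tilde\mu=-\lambda^2\mu/t$ is not quite the paper's momentum map, which carries an additional shift proportional to $\chi_0\lambda$; without that shift the transformed Hamiltonian does not land exactly on~(\ref{P3p2H}), though of course the induced second-order equation is unaffected, so this does not touch the validity of your proof of the proposition.
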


\begin{proof}
First we apply to the Painlev\'e $\big(2;\frac32\big)$ Hamiltonian the time-dependent canonical transformation~(\ref{time}) obtaining
\begin{gather*}
t \tilde H=\tilde\lambda^2\tilde\mu^2+\big({-}\chi_0\tilde\lambda+1\big)\tilde\mu+\frac{t\tilde\lambda}{2}.
\end{gather*}
Next we apply the time independent canonical transformation
\begin{gather*} 
\tilde\lambda= \frac{1}{\lambda},\qquad
\tilde\mu=-\mu\lambda^2+\frac{\chi_0\lambda}{2},
\end{gather*}
obtaining
\begin{gather*}
 t\tilde H = \lambda^2\mu^2-\lambda^2\mu+\frac{\chi_0 \lambda}{2}+\frac{t}{2\lambda}-\frac{\chi_0^2}{4},
\end{gather*}
which is the Painlev\'e $\big(\frac32;2\big)$ Hamiltonian for $\chi_\infty=\chi_0$ minus $\frac{\chi_0^2}{4}$.
\end{proof}

\begin{Remark}
The standard Painlev\'e III equation is given by
\begin{gather}\label{P3}
\frac{\d^2\tilde\lambda}{\d \tilde t^2}=\frac{1}{\lambda}\bigg(\frac{\d\tilde\lambda}{\d \tilde t}\bigg)^2-\frac{1}{\tilde t}\frac{\d\tilde\lambda}{\d \tilde t}+
\frac{\alpha\tilde\lambda^2+\beta}{\tilde t}+\gamma \tilde\lambda^3+\frac{\delta}{\tilde\lambda}.
\end{gather}
The Painlev\'e III$'$ equation is obtained from~(\ref{P3}) by $\tilde\lambda=t\lambda$, $\tilde t=t^2$.
\end{Remark}

\subsection{Painlev\'e IV-34 or (\un \underline{3})}\label{Painleve IV-34}

It has been noted in \cite{Ohyama} that it is natural to consider Painlev\'e IV
together with the so-called Painlev\'e 34. The latter is equivalent to Painlev\'e II, and therefore is not so well known. Together they can be treated as special cases of a supertype, which in \cite{Ohyama} is denoted $4\underline{\;\;}34$, and we denote by IV-34, or preferably by $(\un \underline{3})$, since it is related to the supertype $(\un \underline{3})$ of the Heun class. In~this subsection we discuss this supertype of Painlev\'e in detail.

Painlev\'e IV-34 depends on 3 parameters. It is invariant with respect to a scaling transformation. It contains Painlev\'e IV depending on 2 parameters and Painlev\'e 34 depending on 1 parameter, as well as a trivial type solvable in quadratures.
\begin{itemize}\itemsep=0pt
\item[$\diamond$] Painlev\'e IV-34 or $(\un \underline{3})$ equation and Hamiltonian:
 \begin{gather}
 \frac{\d^2\lambda}{\d t^2}= \frac{1}{2\lambda}\bigg(\frac{\d \lambda}{\d t}\bigg)^2+
 \rho\lambda(2\lambda+t)+\gamma\lambda(\lambda+t)(3\lambda+t)+ \frac{\beta}{4\lambda},\nonumber
\\
 H=\lambda\mu^2-\big(\eta\lambda^2+\eta t\lambda-\theta\lambda+\kappa_0\big)\mu+
 \bigg(\frac{\theta^2}{4}+\frac{(\kappa_0-1)\eta}{2}\bigg)\lambda,
 \label{painleve434}
 \end{gather}
 where
 $\beta=-2\kappa_0^2$, $\rho=-\eta\theta$, $\gamma=\frac12\eta^2$.
Scaling properties
 \begin{gather*}
 \epsilon P_{\beta,\rho,\gamma}(\epsilon t,\epsilon\lambda)
 =P_{\beta,\epsilon^3\rho,\epsilon^4\gamma}( t,\lambda),
 \\
 \epsilon H_{\eta,\theta,\kappa_0}\big(\epsilon t,\epsilon\lambda,\epsilon^{-1}\mu\big)
 = H_{\epsilon^2\eta,\epsilon\theta,\kappa_0}(t,\lambda,\mu).
 \end{gather*}

\item[$\ast$] Painlev\'e IV or $(\un 3)$ equation and Hamiltonian, recalled from~(\ref{P4}) and~(\ref{P4h}):
\begin{gather*}
\frac{\d^2\lambda}{\d t^2}= \frac{1}{2\lambda} \left(\frac{\d\lambda}{\d t}\right)^2+\frac{3}{2}\lambda^3+4t \lambda^2+2\big(t^2-\alpha\big)\lambda+\frac{\beta}{\lambda},
\\
H= 2\lambda\mu^2-\big(\lambda^2+2t \lambda+2\kappa_0\big)\mu+\theta_{\infty}\lambda,
\end{gather*}
where
$\alpha=-\kappa_0+2\theta_{\infty}-1$, $\beta=-2\kappa_0^2$.

\item[$\ast$] Painlev\'e 34 or $\big(\un \frac{5}{2}\big)$ equation and Hamiltonian, recalled
from~(\ref{P34}) and~(\ref{P34h}):
\begin{gather}
\frac{\d^2\lambda}{\d t^2}=\frac{1}{2\lambda}\Big(\frac{\d\lambda}{\d t}\Big)^2+2\lambda^2+t\lambda+\frac{\beta}{4\lambda},\nonumber
\\
H=\lambda\mu^2-\kappa_0\mu-\frac{\lambda^2}{2}-\frac{t\lambda}{2},
\label{P34h-}
\end{gather}
where $\beta=-2\kappa_0^2$.
\end{itemize}

Let us discuss special cases:
 \begin{itemize}\itemsep=0pt
 \item
 Let $\gamma\neq0$. By scaling we can set $\gamma=\frac18$. We~change the time $\tilde t=\frac12 t+\rho$. We~obtain the Painlev\'e $(\un3)$ equation with $\alpha=4\rho^2$.

 The equivalent reduction on the Hamiltonian level: For $\eta\neq0$,
 by scaling, we can make $\eta=\frac12$. We~multiply the Hamiltonian by $2$ and change $t$ to $2t$:
 \begin{gather*}
 2 H (2t,\lambda,\mu)=2\lambda\mu^2-\big(\lambda^2+2t\lambda-2\theta\lambda+\kappa_0\big)\mu +\frac{\theta^2+\kappa_0-1}{2}\lambda,
 \end{gather*}
 which is the Painlev\'e $(\un3)$ Hamiltonian with $\theta_\infty=\frac{\theta^2+\kappa_0-1}{2}$ and $\tilde t=t-\theta$.

 \item Let $\gamma=0$, $\rho\neq0$.
 By scaling we can make $\rho=1$.
 We~obtain the Painlev\'e $34$ or $\big(\un \frac52\big)$ equation. 
 This reduction does not work for the Painlev\'e $34$ or $\big(\un \frac52\big)$ Hamiltonian.
 \item Let $\gamma=\rho=0$, which on the Hamiltonian level corresponds to $\eta=0$. We~have
 the Hamiltonian\vspace{-.5ex}
 \begin{gather*}
 H=\lambda\mu^2- (-\theta\lambda+\kappa_0)\mu+\frac{\theta^2}{4}\lambda,
 \end{gather*}
 which is solvable by quadratures (see Section~\ref{hammo1}).
 \end{itemize}

 \begin{Remark}
 Note that Painlev\'e 34 or $\big(\un\frac52\big)$ is equivalent to Painlev\'e II or $(4)$.

 Let us show this on the Hamiltonian level.
After an application of the canonical transfor\-mation\vspace{-.5ex}
 \begin{gather*}
 \tilde \mu=\lambda,\qquad \tilde\lambda=-\mu,
 \end{gather*}
 the Painlev\'e $\big(\un\frac52\big)$ Hamiltonian~(\ref{P34h-}) becomes\vspace{-.5ex}
 \begin{gather*}
 H=-\frac{\tilde\mu^2}{2}-\tilde\lambda^2\tilde\mu-\frac{t\tilde\mu}{2} +\kappa_0\tilde\lambda.
 \end{gather*}
 Then we change $t$ into $-t$ and multiply the Hamiltonian by $-1$, obtaining the Painlev\'e $(4)$ Hamiltonian~(\ref{H2-}) with $\kappa_0=\alpha+\frac12$.
\end{Remark}

\subsection{Painlev\'e I--II or (\underline{4})}
Usually Painlev\'e I and II equations are treated separately.
However, it has been noted already by Painlev\'e and elaborated in \cite{Ohyama} that it is natural to join them
in a single supertype. \cite{Ohyama}~denotes it $1\underline{\;\;}2$, we denote it I--II, or preferably $(\underline4)$, since it corresponds to the supertype $(\underline4)$
of the Heun class. In~this subsection we discuss this supertype of Painlev\'e in detail.

Painlev\'e I--II depends on 2 parameters.
It is invariant with respect to a scaling transformation. It contains Painlev\'e II, depending on 1 parameter, Painlev\'e I with no parameters and a trivial type solvable in quadratures.

\begin{itemize}\itemsep=0pt
\item[$\diamond$]
Painlev\'e I--II or $(\underline{4})$ equation and Hamiltonian:
\begin{gather*}
 \frac{\d^2\lambda}{\d t^2}=\gamma\big(2\lambda^3+t\lambda\big)+\beta\big(6\lambda^2+t\big),
 \\
 H=\frac12\mu^2-\bigg(\eta\lambda^2+\frac12\eta t\bigg)\mu-2\beta\lambda^3-t\beta\lambda-\frac12\eta\lambda,
 \end{gather*}
 where $\gamma=\eta^2$.

 Scaling properties:
 \begin{gather*}
 \epsilon^3 P_{\gamma,\beta}\big(\epsilon^2 t,\epsilon\lambda\big)=P_{\epsilon^6\gamma,\epsilon^5\beta}(t,\lambda),
 \\
 \epsilon^2 H_{\eta,\beta}\big(\epsilon^2 t,\epsilon\lambda,\epsilon^{-1}\mu\big)=
 H_{\epsilon^3\eta,\epsilon^5\beta}(t,\lambda,\mu).
\end{gather*}

\item[$\ast$]
 Painlev\'e II or $(4)$ equation and Hamiltonian, recalled from~(\ref{P2}) and~(\ref{H2}):
 \begin{gather}
\frac{\d^2\lambda}{\d t^2}= 2\lambda^3+t \lambda+\alpha,\nonumber
\\
\label{H2-}
H=\frac{1}{2}\mu^2-\bigg(\lambda^2+\frac{t}{2}\bigg)\mu-\bigg(\alpha+\frac{1}{2}\bigg)\lambda.
\end{gather}

\item[$\ast$]
 Painlev\'e I or $\big(\frac72\big)$ equation and Hamiltonian, recalled from~(\ref{P1.}) and~(\ref{P1.h}):
 \begin{gather*}
 \frac{\d^2\lambda}{\d t^2}=6\lambda^2+t,
 \\
 H=\frac{1}{2}\mu^2-2\lambda^3-t\lambda.
\end{gather*}
\end{itemize}

Let us discuss special cases:
 \begin{itemize}\itemsep=0pt\item
Let $\eta\neq0$, in both the equation and the Hamiltonian. By scaling we can set $\eta=1$.
We~apply the canonical transformation
\begin{gather*}
t=\tilde t+6\beta^2,\qquad
\lambda=\tilde\lambda-\beta,\qquad \mu=\tilde\mu-2\beta\tilde\lambda+4\beta^2.
\end{gather*}
We~obtain
\begin{gather*}
H=\frac{\tilde\mu^2}{2}-\bigg(\tilde\lambda^2+\frac{\tilde t}{2}\bigg)\mu-\bigg(2\beta^3+\frac12\bigg)\tilde\lambda+\frac12\beta-\beta^2\tilde t.
\end{gather*}
Thus up to free terms we obtain the Painlev\'e $(4)$ Hamiltonian with
 $\alpha=2\beta^3$, and hence also the Painlev\'e $(4)$ equation.
 \item
 Let $\eta=0$, $\beta\neq0$. By
 scaling we can set $\beta=1$. The Hamiltonian becomes
 \begin{gather*}
 H=\frac{\mu^2}{2}-2\lambda^3-t\lambda.
 \end{gather*}
 Thus we obtain the Painlev\'e $\big(\frac72\big)$ Hamiltonian, and hence also the Painlev\'e $\big(\frac72\big)$ equation.
 \item
 Let $\eta=0$, $\beta=0$. The Hamiltonian becomes $H=\frac12\mu^2$,
 which is trivial.
\end{itemize}

\appendix
\section{Proof of Theorems \ref{main0}, \ref{main1} and \ref{main}}

\subsection{Preparation for the proof of Theorem \ref{main0}}\label{ap0}

Recall from~(\ref{heudef1}),~(\ref{heudef}) and~(\ref{compa3}) that we set
 \begin{gather*}
p(z):=p_0(z)-\frac{1}{z-\lambda},
\qquad
p_0(z):=\frac{\tau(z)}{\sigma(z)},
\\
q(z):=q_0(z)+\frac{1}{\sigma(z)}\bigg({-}\eta(\lambda)-\mu\big(\tau(\lambda)-\sigma'(\lambda)\big) -\mu^2\sigma(\lambda) +\frac{\mu\sigma(\lambda)}{(z-\lambda)}\bigg),
 \qquad
 q_0(z):=\frac{\eta(z)}{\sigma(z)},
 \\
 a(z):=\frac{c(z)}{z-\lambda},\qquad b(z):=-\frac{c(\lambda)\mu}{z-\lambda},
 \end{gather*}
where $c(z)$ is a
$t,\lambda$-dependent polynomial of degree $\leq2$.
 Recall that the prime is synonymous with $\partial_z$ and the dot with $\partial_t$.

We~will find conditions on $c$
and the time variable $t$ so that the compatibility conditions~(\ref{compa2.}) and~(\ref{compa1.}), that is,
\begin{gather}
 \dot p-ap'+2b'-pa'+a''=0,
 \label{compa2}
 \\
\label{compa1}
 \dot q+pb'-2qa'-aq'+b''=0,
\end{gather}
 are satisfied.

 The following simple identities will be useful in our calculations:
 \begin{Lemma}
 \begin{gather}
 \frac{(\lambda-s)}{(z-\lambda)}\bigg(\frac1{z-s}-\frac1{\lambda-s}+\frac{z-\lambda}{(\lambda-s)^2}\bigg)=
 \frac{1}{\lambda-s}-\frac1{z-s},\nonumber
\\
 \frac{2}{z-s}-\frac2{\lambda-s}+(z-\lambda)\bigg(\frac{1}{(z-s)^2}+\frac{1}{(\lambda-s)^2}\bigg)=
 \frac{(z-\lambda)^3}{(z-s)^2(\lambda-s)^2}.\label{simple}
 \end{gather}
 Let $\psi:=\frac{\xi(z)}{z-s}$.
 If $\xi$ be a polynomial with $\deg\xi\leq2$, then
 \begin{gather}
 \xi(z)-\xi(\lambda)-(z-\lambda)\xi'(z)=
 - \frac{\xi''}{2}(z-\lambda)^2,\nonumber
 \\ \label{muu2-}
 \psi(z)-\psi(\lambda)-(z-\lambda)\psi'(z)
 =-\frac{\xi(s)(z-\lambda)^2}{(z-s)^2(\lambda-s)}.
 \end{gather}
 If $\xi$ is a polynomial with $\deg\xi\leq3$, then
 \begin{gather} 2\xi(z)-2\xi(\lambda)-(z-\lambda)\big(\xi'(z)+\xi'(\lambda)\big)=-\frac{\xi'''}{6}(z-\lambda)^3,
 \label{muus}
 \\
 2\psi(z)-2\psi(\lambda)-(z-\lambda)\left(\psi'(z)+ \psi'(\lambda)\right)=\xi(s)
 \frac{(z-\lambda)^3}{(z-s)^2(\lambda-s)^2}.\label{simple.2}
 \end{gather}
\end{Lemma}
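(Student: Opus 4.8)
The plan is to prove the six displayed identities by elementary algebra, organizing the argument so that the two formulas for a polynomial $\xi$ and the two formulas for $\psi=\xi/(z-s)$ are reduced to the two purely rational identities stated first.

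First I would dispose of the two leading identities. For the first one I would bring the three terms inside the parenthesis to the common denominator $(z-s)(\lambda-s)^2$; the resulting numerator manifestly contains the factor $z-\lambda$, which cancels against the prefactor $\frac{\lambda-s}{z-\lambda}$, leaving $\frac1{\lambda-s}-\frac1{z-s}$. For the second one it is convenient to set $u:=z-s$ and $v:=\lambda-s$, so that $z-\lambda=u-v$, and then a short computation gives
\[
\frac2u-\frac2v+(u-v)\Bigl(\frac1{u^2}+\frac1{v^2}\Bigr)=\frac{u^3-3u^2v+3uv^2-v^3}{u^2v^2}=\frac{(u-v)^3}{u^2v^2},
\]
which is the asserted identity~\eqref{simple}.

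Next I would turn to the identities for a polynomial $\xi$. If $\deg\xi\le2$, then $\xi$ coincides with its order-two Taylor polynomial at the point $z$, so $\xi(\lambda)=\xi(z)+\xi'(z)(\lambda-z)+\frac{\xi''}{2}(\lambda-z)^2$ exactly, and rearranging gives $\xi(z)-\xi(\lambda)-(z-\lambda)\xi'(z)=-\frac{\xi''}{2}(z-\lambda)^2$. For the cubic identity~\eqref{muus} I would first observe that the combination $2\xi(z)-2\xi(\lambda)-(z-\lambda)\bigl(\xi'(z)+\xi'(\lambda)\bigr)$ vanishes identically whenever $\deg\xi\le2$ --- this follows by adding the two order-two Taylor expansions of $\xi$ (one centred at $z$, one at $\lambda$) --- so by linearity it suffices to check $\xi(z)=z^3$, in which case both sides equal $-(z-\lambda)^3$.

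Finally, for the identities involving $\psi$, I would write $\xi(z)=\xi(s)+(z-s)\tilde\xi(z)$ with $\tilde\xi$ a polynomial of degree one less, so that $\psi(z)=\frac{\xi(s)}{z-s}+\tilde\xi(z)$. The polynomial summand $\tilde\xi$ contributes nothing to either combination, by the two polynomial identities just established applied to $\tilde\xi$. It therefore remains to evaluate the two combinations on $g(z):=\frac1{z-s}$, where $g'(z)=-\frac1{(z-s)^2}$: a direct computation gives $g(z)-g(\lambda)-(z-\lambda)g'(z)=-\frac{(z-\lambda)^2}{(z-s)^2(\lambda-s)}$, while $2g(z)-2g(\lambda)-(z-\lambda)\bigl(g'(z)+g'(\lambda)\bigr)$ is precisely the left-hand side of~\eqref{simple} and hence equals $\frac{(z-\lambda)^3}{(z-s)^2(\lambda-s)^2}$. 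Multiplying through by $\xi(s)$ yields~\eqref{muu2-} and~\eqref{simple.2}. None of these steps is a genuine obstacle; the only points requiring attention are the bookkeeping of the cancellations in the two rational identities and the remark that the lower-order Taylor remainders drop out exactly because low-degree polynomials coincide with their truncated Taylor expansions.
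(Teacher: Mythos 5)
Your proof is correct. Note that the paper states this lemma without proof (it is introduced merely as a list of ``simple identities''), so there is no argument of the authors to compare against; your verification is exactly the kind of elementary check that is being left to the reader. Your organization is clean: the two rational identities are checked directly in the variables $u=z-s$, $v=\lambda-s$, the polynomial identities follow from exact second-order Taylor expansions (plus linearity and the monomial $z^3$ for \eqref{muus}), and the decomposition $\xi(z)=\xi(s)+(z-s)\tilde\xi(z)$ reduces the $\psi$-identities \eqref{muu2-} and \eqref{simple.2} to the polynomial cases together with the two rational identities, the second of which is literally the left-hand side of \eqref{simple} evaluated on $g(z)=1/(z-s)$. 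All six identities are verified without gaps.
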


\subsection{First compatibility condition}\label{ap1}

\begin{Proposition} \label{prop1}
Suppose that the following equation of motion for $\lambda$ holds:
 \begin{gather}
 \dot\lambda=2 c(\lambda)\mu-c'(\lambda)+\frac{c\tau}{\sigma}(\lambda),
\label{fir}
\end{gather}
and we have the condition
\begin{gather}
 0=I:= \frac{\dot\tau}{\sigma}(z)-\frac{\tau\dot\sigma}{\sigma^2}(z) + \frac{\frac{c\tau}{\sigma}(z)-\frac{c\tau}{\sigma}(\lambda)-(z-\lambda)\big(\frac{c\tau}{\sigma}\big)'(z)}
{(z-\lambda)^2}.
\label{mu1}
\end{gather}
Then~\eqref{compa2} is satisfied.
\end{Proposition}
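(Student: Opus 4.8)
The plan is a straightforward substitution followed by a Laurent expansion about $z=\lambda$. First I would record the derivatives entering \eqref{compa2}. Writing $p_0:=\tau/\sigma$, we have $p=p_0-(z-\lambda)^{-1}$, $a=c(z)(z-\lambda)^{-1}$ and $b=-c(\lambda)\mu(z-\lambda)^{-1}$, hence $p'=p_0'+(z-\lambda)^{-2}$, $a'=c'(z)(z-\lambda)^{-1}-c(z)(z-\lambda)^{-2}$, $a''=c''(z-\lambda)^{-1}-2c'(z)(z-\lambda)^{-2}+2c(z)(z-\lambda)^{-3}$ (with $c''$ a constant since $\deg c\le 2$), $b'=c(\lambda)\mu(z-\lambda)^{-2}$, and
\[
\dot p=\frac{\dot\tau}{\sigma}-\frac{\tau\dot\sigma}{\sigma^2}-\frac{\dot\lambda}{(z-\lambda)^2}.
\]
The one sign-sensitive step here is $\partial_t\big({-}(z-\lambda)^{-1}\big)=-\dot\lambda(z-\lambda)^{-2}$.

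Next I would assemble $-ap'-pa'+a''$ and collect by order of the pole at $z=\lambda$. The triple pole cancels identically; the simple-pole part collapses via $c'p_0+cp_0'=(c\tau/\sigma)'$ to $\big({-}(c\tau/\sigma)'(z)+c''\big)(z-\lambda)^{-1}$; the double-pole part is $\big(\tfrac{c\tau}{\sigma}(z)-c'(z)\big)(z-\lambda)^{-2}$. Adding $2b'$ and $\dot p$, the left-hand side of \eqref{compa2} becomes
\[
\frac{\dot\tau}{\sigma}-\frac{\tau\dot\sigma}{\sigma^2}+\frac{-(c\tau/\sigma)'(z)+c''}{z-\lambda}+\frac{\frac{c\tau}{\sigma}(z)-c'(z)+2c(\lambda)\mu-\dot\lambda}{(z-\lambda)^2}.
\]

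Then I would feed in the equation of motion \eqref{fir} in the form $2c(\lambda)\mu-\dot\lambda=c'(\lambda)-\tfrac{c\tau}{\sigma}(\lambda)$, turning the double-pole numerator into $\tfrac{c\tau}{\sigma}(z)-\tfrac{c\tau}{\sigma}(\lambda)-c'(z)+c'(\lambda)$. Expanding the last fraction of $I$ in \eqref{mu1} as $\big(\tfrac{c\tau}{\sigma}(z)-\tfrac{c\tau}{\sigma}(\lambda)\big)(z-\lambda)^{-2}-(c\tau/\sigma)'(z)(z-\lambda)^{-1}$ and subtracting, the difference between the left-hand side of \eqref{compa2} and $I$ is exactly $c''(z-\lambda)^{-1}+\big(c'(\lambda)-c'(z)\big)(z-\lambda)^{-2}$; this vanishes because $\deg c\le 2$ makes $c'$ affine, so $c'(z)-c'(\lambda)=c''(z-\lambda)$. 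Hence the left-hand side of \eqref{compa2} equals $I$, which is $0$ by hypothesis.

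I do not expect any genuine obstacle; the argument is bookkeeping. The points needing care are the sign of $\partial_t(z-\lambda)^{-1}$, the automatic cancellation of the triple pole in $-ap'-pa'+a''$, and the systematic use of $\deg c\le 2$ (so that $c''$ is constant and $c'$ affine) — this last is the only structural input beyond pure algebra, and it is what makes the would-be leftover terms disappear.
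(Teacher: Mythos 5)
Your computation is correct and follows essentially the same route as the paper: substitute $p$, $a$, $b$ into \eqref{compa2}, collect by order of the pole at $z=\lambda$, kill the double-pole residue with the equation of motion \eqref{fir}, note that the leftover $-c'(z)+c'(\lambda)+c''(z-\lambda)$ vanishes because $\deg c\le 2$, and identify what remains with $I$ from \eqref{mu1}. Your grouping (LHS minus $I$) matches the paper's three-term decomposition exactly, so there is nothing to add.
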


 \begin{proof}
 \begin{gather*}
 0= \dot p(z)-a(z)p'(z)+2b'(z)-p(z)a'(z)+a''(z)
 \\ \phantom{0}
{} =\dot p_0(z)-\frac{\dot\lambda}{(z-\lambda)^2}
 -\frac{c(z)}{z-\lambda}\bigg( p_0'(z)+\frac{1}{(z-\lambda)^2}\bigg)
 +\frac{2c(\lambda)\mu}{(z-\lambda)^2}
 \\ \phantom{0=}
 {}-\bigg(p_0(z)-\frac{1}{(z-\lambda)}\bigg) \bigg(\frac{c'(z)}{z-\lambda}-\frac{c(z)}{(z-\lambda)^2}\bigg)
 +\frac{c''(z)}{z-\lambda}-\frac{2c'(z)}{(z-\lambda)^2}+\frac{2c(z)}{(z-\lambda)^3}
 \\ \phantom{0}
 {}=\frac{1}{(z-\lambda)^2}\big({-} \dot\lambda+ 2c(\lambda)\mu-c'(z)+(cp_0)(z)\big)
 +\frac{1}{(z-\lambda)}\big({-}c(z)p_0'(z)-p_0(z)c'(z)+c''(z)\big)
 \\ \phantom{0=}
 {}+\dot p_0(z).
 \end{gather*}
 We~rearrange this as
 \begin{gather}\label{rea1}
 \phantom{0}= \frac{1}{(z-\lambda)^2}\big({-} \dot\lambda+2 c(\lambda)\mu-c'(\lambda)+(cp_0)(\lambda)\big)
 \\ \phantom{0= }
 \label{rea2}
 {}+\frac{ -c'(z)+c'(\lambda)+c''(z)(z-\lambda)}{(z-\lambda)^2}
 \\ \phantom{0= }
 \label{rea3}
 +\frac{(cp_0)(z)-(cp_0)(\lambda)-(z-\lambda)\big(cp_0)'(z)}
{(z-\lambda)^2} + \dot p_0(z).
 \end{gather}
(\ref{rea1}) is proportional to $\frac{1}{(z-\lambda)^2}$ and the last two lines are regular at $z=\lambda$. Therefore,~(\ref{rea1}) has to vanish separately, yielding the condition~(\ref{fir}).
(\ref{rea2}) vanishes automatically, because $c$ is a~polynomial of degree $\leq2$ in $z$.
(\ref{rea3}) yields the condition~(\ref{mu1}).
\end{proof}

\subsection{Second compatibility condition}\label{ap2}

It is much more difficult to analyze the second compatibility condition.

\begin{Proposition}\label{prop2}
Suppose that
the equation for $\lambda$~\eqref{fir} holds together with the equation for $\mu$
 \begin{gather}
 \dot\mu=-\frac{c\eta'}{\sigma}(\lambda)-\mu\bigg(\frac{c\tau'}{\sigma}(\lambda)-
 \frac{c\sigma''}{2\sigma}(\lambda)-\frac{c''}{2}\bigg)-\mu^2\frac{\sigma'c}{\sigma}(\lambda).
 \label{second}
 \end{gather}
 Assume also that $I=0$ $($see \eqref{mu1}$)$ and
 the following conditions are satisfied:
\begin{gather*} 0=II:=-\frac{\dot\sigma}{\sigma}(z)\big(\eta(z)-\eta(\lambda)\big)+\dot\eta(z)-\dot\eta(\lambda)
\\ \hphantom{ 0=II:=}
{} +\frac{\eta(z)-\eta(\lambda)}{(z-\lambda)}\bigg(\frac{c\sigma'}{\sigma}(z)-c'(z)\bigg)
-\eta'(\lambda)\bigg(\frac{\sigma'c}{\sigma}(\lambda)-c'(\lambda)\bigg)
\\ \hphantom{ 0=II:=}
{}+ \frac{2c\eta(z)-2c\eta(\lambda)-\big((c\eta)'(z)+(c\eta)'(\lambda)\big)(z-\lambda)}{(z-\lambda)^2};
\\
0=III:= \frac{\dot\sigma}{\sigma}(z)- \frac{\dot\sigma}{\sigma}(\lambda)-
 \frac{\frac{c\sigma'}{\sigma}(z)-\frac{c\sigma'}{\sigma}(\lambda)
-(\frac{c\sigma'}{\sigma})'(\lambda)(z-\lambda)} {(z-\lambda)}.
 \end{gather*}
Then~\eqref{compa1} is true.
 \end{Proposition}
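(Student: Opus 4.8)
The plan is to prove Proposition~\ref{prop2} by the same device as Proposition~\ref{prop1}: insert the explicit expressions for $p$, $q$, $a$, $b$ recalled at the start of Appendix~\ref{ap0} into the left-hand side of the second compatibility condition~\eqref{compa1}, namely $\dot q+pb'-2qa'-aq'+b''$, expand everything, and organize the result according to its Laurent behaviour at $z=\lambda$. Since $b''$, $aq'$, $qa'$, $pb'$ each carry a pole of order up to $3$ at $z=\lambda$ while $\dot q$ carries a pole of order only up to $2$ (the sole source of a double pole being the $\dot\lambda$-derivative of $\mu\sigma(\lambda)/\big(\sigma(z)(z-\lambda)\big)$), the whole expression splits as a principal part at $z=\lambda$ — a polynomial in $(z-\lambda)^{-1}$ of degree $3$ with $t,\lambda,\mu$-dependent coefficients — plus a part regular at $z=\lambda$, which is a rational function of $z$ whose only poles are those of $\sigma$.

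First I would dispose of the principal part. The coefficient of $(z-\lambda)^{-3}$ receives contributions only from $b''$, $pb'$, $-2qa'$, $-aq'$; reading off their leading coefficients (using $\sigma(\lambda)/\sigma(z)\to1$ and $c(z)\to c(\lambda)$) one gets $-2c(\lambda)\mu+2c(\lambda)\mu+c(\lambda)\mu-c(\lambda)\mu=0$, so this coefficient vanishes by a direct check, exactly as the term~\eqref{rea2} dropped out of the proof of Proposition~\ref{prop1} (here, too, $\deg c\leq2$ is what discards the remaining $c$-only piece). The coefficient of $(z-\lambda)^{-2}$ is an affine function of $\dot\lambda$ and vanishes once~\eqref{fir} is substituted; the coefficient of $(z-\lambda)^{-1}$ is an affine function of $\dot\mu$ and $\dot\lambda$, and after substituting~\eqref{fir} its vanishing is precisely equivalent to the equation of motion~\eqref{second} for $\mu$ (one may equally regard this step as the origin of~\eqref{second}). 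Thus, under the hypotheses~\eqref{fir} and~\eqref{second} the entire principal part at $z=\lambda$ vanishes.

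It then remains to show that the part regular at $z=\lambda$ is zero, and this is the technical core. After collecting terms it is built out of divided differences $\frac{f(z)-f(\lambda)}{z-\lambda}$ and $\frac{f(z)-f(\lambda)-(z-\lambda)f'(z)}{(z-\lambda)^2}$ and combinations $2f(z)-2f(\lambda)-(z-\lambda)\big(f'(z)+f'(\lambda)\big)$, for $f$ among $c\eta$, $c\tau/\sigma$, $c\sigma'/\sigma$ and $\dot\sigma/\sigma$; the identities of the Lemma in Section~\ref{ap0}, in particular~\eqref{muu2-},~\eqref{muus} and~\eqref{simple.2}, rewrite each of these without a spurious pole at $z=\lambda$. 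One then uses the hypothesis $I=0$ (equation~\eqref{mu1}) to eliminate the contribution $\dot p_0=\dot\tau/\sigma-\tau\dot\sigma/\sigma^{2}$, and finds that what survives is a linear combination of $II$ and $III$ whose coefficients are rational in $z,\lambda$ and regular at $z=\lambda$; hence $I=II=III=0$ forces the regular part to vanish, completing the proof that~\eqref{compa1} holds. The main obstacle throughout is the bookkeeping: $\dot q$ alone produces a large number of terms, since it differentiates the deformation block $\sigma^{-1}\big({-}\eta(\lambda)-\mu(\tau(\lambda)-\sigma'(\lambda))-\mu^{2}\sigma(\lambda)+\mu\sigma(\lambda)/(z-\lambda)\big)$ with respect to $t$ through all of $\sigma$, $\tau$, $\eta$, $\lambda$ and $\mu$; arranging these — together with the contributions of $pb'$, $qa'$, $aq'$, $b''$ — so that exactly the combination $I$, $II$, $III$ emerges, while keeping the pole orders at $z=\lambda$ straight and applying the divided-difference identities correctly, is delicate, and this is why the authors relegate the computation to Appendix~\ref{ap2}.
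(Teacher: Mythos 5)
Your plan is the paper's own strategy (Appendix~\ref{ap2}): substitute the explicit $p$, $q$, $a$, $b$ into \eqref{compa1}, expand, cancel the pole part at $z=\lambda$ using \eqref{fir} and \eqref{second} (the simple pole being exactly the origin of \eqref{second}), and identify the regular remainder with the conditions $I$, $II$, $III$. Your structural claims about the pole part are right: the Laurent coefficient of $(z-\lambda)^{-3}$ cancels identically, the double pole disappears once \eqref{fir} is inserted, and the simple pole yields \eqref{second}. (Minor slip: the individual triple-pole coefficients are $b''\to-2c(\lambda)\mu$, $pb'\to-c(\lambda)\mu$, $-2qa'\to+2c(\lambda)\mu$, $-aq'\to+c(\lambda)\mu$; your assignment is permuted, though the sum is zero either way, and $\deg c\le2$ plays no role in this particular cancellation.)

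The gap is in the regular part, which is the entire substance of the proposition and which you only assert. In the paper's computation the remainder comes out as $\frac{1}{\sigma(z)}II$ plus $\big({-}\frac{\mu\sigma(\lambda)}{\sigma(z)(z-\lambda)}+\frac{\mu(\tau(\lambda)-\sigma'(\lambda))}{\sigma(z)}+\frac{\mu^2\sigma(\lambda)}{\sigma(z)}\big)III$ plus the residual $\frac{\mu\sigma(\lambda)}{\sigma(z)}\big({-}\partial_t\frac{\tau}{\sigma}(\lambda)+\partial_t\frac{\sigma'}{\sigma}(\lambda)+\frac12\big(\frac{\tau c}{\sigma}\big)''(\lambda)-\frac12\big(\frac{\sigma' c}{\sigma}\big)''(\lambda)\big)$, and the residual vanishes because it equals $\frac{\mu\sigma(\lambda)}{\sigma(z)}\big({-}\lim_{z\to\lambda}I+\lim_{z\to\lambda}\frac{III}{z-\lambda}\big)$, cf.~\eqref{powq3}--\eqref{powq4}. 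Two points in your description are therefore inaccurate: the coefficient multiplying $III$ is \emph{not} regular at $z=\lambda$ (it carries a simple pole, harmless only because $III=O(z-\lambda)$), and $I=0$ is not used to ``eliminate $\dot p_0$''~--- no $\dot p_0(z)$ term occurs in \eqref{compa1}; $I$ enters only through its value at $z=\lambda$, which supplies $\partial_t\frac{\tau}{\sigma}(\lambda)-\frac12\big(\frac{c\tau}{\sigma}\big)''(\lambda)$, and one also needs $\lim_{z\to\lambda}III/(z-\lambda)=0$, not merely $III=0$ as a multiplicative factor. Until the expansion is actually carried out and shown to reduce to exactly this combination, what you have is a correct outline rather than a proof; the paper's appendix consists precisely of that computation.
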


 \begin{proof}
 \begin{gather*}
 0= \dot q(z)+p(z)b'(z)-2q(z)a'(z)-a(z)q'(z)+b''(z)
 \\ \hphantom{ 0}
 {}=\frac{1}{\sigma(z)}\bigg({-}\tau(\lambda)+\sigma'(\lambda)-2\mu\sigma(\lambda)
 +\frac{\sigma(\lambda)}{(z-\lambda)}\bigg)\dot\mu
 \\ \hphantom{0=}
 {}+\frac{1}{\sigma(z)}\bigg({-}\eta'(\lambda)-\mu(\tau'(\lambda)-\sigma''(\lambda))-\mu^2\sigma'(\lambda)+
 \frac{\mu\sigma'(\lambda)}{(z-\lambda)} +\frac{\mu\sigma(\lambda)}{(z-\lambda)^2}\bigg)\dot\lambda
 \\ \hphantom{0=}
{}-\frac{\dot\sigma(z)}{\sigma^2(z)}\bigg(\eta(z)-\eta(\lambda)-\mu(\tau(\lambda)-\sigma'(\lambda)) -\mu^2\sigma(\lambda)+ \frac{\mu\sigma(\lambda)}{(z-\lambda)}\bigg)
 \\ \hphantom{0=}
{}+\frac{1}{\sigma(z)}\bigg(\dot\eta(z)\!-\!\dot\eta(\lambda)\!-\mu(\dot\tau(\lambda)\!-\dot\sigma'(\lambda)) \!-\mu^2\dot\sigma(\lambda)\!+\frac{\mu\dot\sigma(\lambda)}{(z\!-\!\lambda)}\bigg)
\!+\bigg(\frac{\tau(z)}{\sigma(z)}\!-\frac{1}{(z\!-\!\lambda)}\bigg)\frac{c(\lambda)\mu}{(z\!-\!\lambda)^2}
\\ \hphantom{0=}
 {}-\frac{2}{\sigma(z)}\bigg(\eta(z)-\eta(\lambda)-\mu\big(\tau(\lambda)-\sigma'(\lambda)\big) -\mu^2\sigma(\lambda)+\frac{\mu\sigma(\lambda)}{(z-\lambda)}\bigg) \bigg(\frac{c'(z)}{(z-\lambda)}-\frac{c(z)}{(z-\lambda)^2}\bigg)
 \\ \hphantom{0=}
 {} +\frac{c(z)\sigma'(z)}{(z-\lambda)\sigma^2(z)}\bigg(\eta(z)-\eta(\lambda) -\mu\big(\tau(\lambda)-\sigma'(\lambda)\big)-\mu^2\sigma(\lambda)
 +\frac{\mu\sigma(\lambda)}{(z-\lambda)}\bigg)
 \\ \hphantom{0=}
{}-\frac{c(z)}{(z-\lambda)\sigma(z)}\bigg(\eta'(z)-\frac{\mu\sigma(\lambda)}{(z-\lambda)^2}\bigg)-
2\frac{c(\lambda)\mu}{(z-\lambda)^3}.
 \end{gather*}
 Next we collect the terms that contain an inverse power of $z-\lambda$.
 These terms are grouped in~seve\-ral categories. In~these terms we also insert~(\ref{fir}). We~obtain
 \begin{gather*}
\phantom{0} =
\frac{1}{\sigma(z)}\big({-}\tau(\lambda)\!+\sigma'(\lambda)-2\mu\sigma(\lambda) \big)\dot\mu
+ \frac{1}{\sigma(z)}\big({-}\eta'(\lambda) -\mu(\tau'(\lambda)-\sigma''(\lambda))-\mu^2\sigma'(\lambda)\big)\dot\lambda
\\ \hphantom{0 =}
{} -\frac{\dot\sigma(z)}{\sigma^2(z)}\big(\eta(z)-\eta(\lambda)-\mu(\tau(\lambda)-\sigma'(\lambda)) -\mu^2\sigma(\lambda)\big)
\\ \hphantom{0 =}
{}+\frac{1}{\sigma(z)}\big(\dot\eta(z)-\dot\eta(\lambda)-\mu(\dot\tau(\lambda)-\dot\sigma'(\lambda)) -\mu^2\dot\sigma(\lambda)\big)
\\ \hphantom{0 =}
{}+\frac{1}{(z-\lambda)\sigma(z)}\bigg({-}\mu\frac{\dot\sigma(z)\sigma(\lambda)}{\sigma(z)} +\mu\dot\sigma(\lambda)+\dot\mu\sigma(\lambda)\bigg)
+\frac{1}{(z-\lambda)^2\sigma(z)}2c(z)\big(\eta(z)-\eta(\lambda)\big)
\\ \hphantom{0 =}
{}+\frac{1}{(z-\lambda)\sigma(z)} \bigg({-}2c'(z)\big(\eta(z)-\eta(\lambda)\big)+
 c(z)\frac{\sigma'(z)}{\sigma(z)}\big(\eta(z)-\eta(\lambda)\big)- c(z)\eta'(z)\bigg)
 \\ \hphantom{0 =}
{}+\frac{\mu}{(z-\lambda)^2\sigma(z)}
 \big(c(\lambda)\tau(\lambda)+c(\lambda)\tau(z)-2c(z)\tau(\lambda)\big)
 \\ \hphantom{0 =} {}+\frac{\mu}{(z-\lambda)\sigma(z)}\bigg(c(\lambda)\frac{\sigma'(\lambda)}{\sigma(\lambda)}\tau(\lambda) -c(z)\frac{\sigma'(z)}{\sigma(z)} \tau(\lambda) +2c'(z)\tau(\lambda)\bigg)
 \\ \hphantom{0 =}
{}+ \frac{\mu}{(z-\lambda)^3\sigma(z)}\big(3c(z)\sigma(\lambda)-3c(\lambda)\sigma(z)\big)
\\ \hphantom{0 =}
{}+ \frac{\mu}{(z-\lambda)^2\sigma(z)}\bigg({-}c'(\lambda)\sigma(\lambda)-2c'(z)\sigma(\lambda) +2c(z)\sigma'(\lambda) +c(z)\frac{\sigma'(z)}{\sigma(z)}\sigma(\lambda)\bigg)
 \\ \hphantom{0 =}
 {}+\frac{\mu}{(z-\lambda)\sigma(z)}\bigg({-}c'(\lambda)\sigma'(\lambda)-2c'(z)\sigma'(\lambda)+
 c(z)\frac{\sigma'(z)}{\sigma(z)}\sigma'(\lambda)\bigg)
 \\ \hphantom{0 =}
{}+\frac{\mu^2}{(z-\lambda)^2\sigma(z)} \big(2c(\lambda)\sigma(\lambda)-2c(z)\sigma(\lambda)\big)
 \\ \hphantom{0 =}
 {}+\frac{\mu^2}{(z-\lambda)\sigma(z)}\bigg(2c(\lambda)\sigma'(\lambda)+2c'(z)\sigma(\lambda)
 -c(z)\frac{\sigma'(z)}{\sigma(z)}\sigma(\lambda) \bigg)
 \\ \hphantom{0}
 {}=-\frac{1}{\sigma(z)}\big(\eta'(\lambda)+\mu\big(\tau'(\lambda)-\sigma''(\lambda)\big) +\mu^2\sigma'(\lambda)\big)\dot\lambda
 -\frac{1}{\sigma(z)}\big(\tau(\lambda)-\sigma'(\lambda)+2\mu\sigma(\lambda)\big)\dot\mu
 \\ \hphantom{0 =}
{} +\frac{\sigma(\lambda)}{\sigma(z)(z-\lambda)}\dot\mu+\frac{1}{\sigma(z)}\bigg(
 -\frac{\dot\sigma}{\sigma}(z)\big(\eta(z)-\eta(\lambda)\big)+\dot\eta(z)-\dot\eta(\lambda)\bigg)
 \\ \hphantom{0 =}
{}+\frac{\mu}{\sigma(z)}\bigg((\lambda)\frac{\dot\sigma}{\sigma}(z)-\dot\tau(\lambda)\bigg)
+\frac{\mu}{\sigma(z)}\bigg({-}\sigma'(\lambda)\frac{\dot\sigma}{\sigma}(z)+\dot\sigma'(\lambda)-
 \sigma(\lambda)\frac{\big(\frac{\dot\sigma}{\sigma}(z)-
 \frac{\dot\sigma}{\sigma}(\lambda)\big)}{(z-\lambda)}\bigg)
 \\ \hphantom{0 =}
 {}+\frac{\mu^2}{\sigma(z)}\sigma(\lambda)\bigg(\frac{\dot\sigma}{\sigma}(z)-
 \frac{\dot\sigma}{\sigma}(\lambda)\bigg)
 +\frac{1}{(z-\lambda)\sigma(z)}c(\lambda)\eta'(\lambda)
 \\ \hphantom{0 =}
{} +\frac{1}{\sigma(z)}\bigg( \frac{
2c\eta(z)\!-\!2c\eta(\lambda)\!-\!\big((c\eta)'(z)\!+\!(c\eta)'(\lambda)\big)(z\!-\!\lambda)}{(z\!-\!\lambda)^2}
 \!+\!\frac{\eta(z)\!-\!\eta(\lambda)}{(z\!-\!\lambda)}\bigg(c\frac{\sigma'}{\sigma}(z)\!-\!c'(z)\bigg)\!\bigg)
 \\ \hphantom{0 =}
{} +\frac{\mu}{\sigma(z)(z-\lambda)}c(\lambda)\tau'(\lambda)
 +\frac{\mu}{\sigma(z)}\bigg(\frac{(\frac{\sigma'c}{\sigma}(\lambda)-
 \frac{\sigma'c}{\sigma}(z))}{(z-\lambda)}\tau(\lambda)+c(\lambda)\frac{\tau''}{2}+c''\tau(\lambda)\bigg)
 \\ \hphantom{0 =}
{}+\frac{\mu}{(z-\lambda)\sigma(z)}\bigg({-}\frac{c''\sigma(\lambda)}{2}-
 \frac{c(\lambda)\sigma''(\lambda)}{2}\bigg)
 \\ \hphantom{0 =}
 {}+ \frac{\mu}{\sigma(z)} \Bigg({-}c''\sigma'(\lambda)+\frac{\big(\frac{c\sigma'}{\sigma}(z)-\frac{c\sigma'}{\sigma}(\lambda) -(z-\lambda)\big(\frac{c\sigma'}{\sigma}\big)'(\lambda)\big)}{(z-\lambda)^2}\sigma(\lambda)
 \\ \hphantom{0 =+ \frac{\mu}{\sigma(z)} \Bigg(}
 {}+\frac{\big(\frac{c\sigma'}{\sigma}(z)-\frac{c\sigma'}{\sigma}(\lambda)\big)}{z-\lambda}
 \sigma'(\lambda)-\frac{\sigma'''c(\lambda)}{2}\Bigg)
 \\ \hphantom{0 =}
{} +\frac{\mu^2}{\sigma(z)(z-\lambda)}c(\lambda)\sigma'(\lambda)
 +\frac{\mu^2}{\sigma(z)}\bigg(c''\sigma(\lambda)-\sigma(\lambda) \frac{\big(\frac{\sigma'c}{\sigma}(z)-\frac{\sigma'c}{\sigma}(\lambda)\big)}{(z-\lambda)}\bigg). \end{gather*}
 The singular term equals
 \begin{gather*}
\frac{1}{\sigma(z)(z-\lambda)}\bigg(\sigma(\lambda)\dot\mu
 + c(\lambda)\eta'(\lambda)+\mu\big(c(\lambda)\tau'(\lambda)- \frac{1}{2}c(\lambda)\sigma''(\lambda)-\frac{1}{2}c''\sigma(\lambda)\big) +\mu^2c(\lambda)\sigma'(\lambda)\bigg).
 \end{gather*}
 It yields the equation for $\dot\mu$, that is~(\ref{second}).
 After inserting~(\ref{fir}) and~(\ref{second}) the first two lines become
 \begin{gather*}
 \frac{1}{\sigma(z)\sigma(\lambda)} \eta'(\lambda)\big({-}c(\lambda)\sigma'(\lambda)+c'(\lambda)\sigma(\lambda)\big)
 \\ \qquad
 {}+\frac{\mu}{\sigma(z)\sigma(\lambda)}\bigg(\tau'(\lambda) \big({-}c(\lambda)\sigma'(\lambda)+c'(\lambda)\sigma(\lambda)\big)
 +\tau(\lambda)\bigg({-}\frac{c''\sigma(\lambda)}{2}
{}+ \frac{c(\lambda)\sigma''(\lambda)}{2}\bigg)\!\bigg)
\\ \qquad
+\frac{\mu}{\sigma(z)\sigma(\lambda)}\bigg(\frac{c''\sigma'(\lambda)\sigma(\lambda)}{2}+
\frac{c(\lambda)\sigma'(\lambda)\sigma''(\lambda)}{2}-
c'(\lambda)\sigma(\lambda)\sigma''(\lambda)\bigg)
\\ \qquad
{}+\frac{\mu^2}{\sigma(z)\sigma(\lambda)}\big({-}c(\lambda)\sigma'(\lambda)^2 +c'(\lambda)\sigma(\lambda)\sigma'(\lambda)-
c''\sigma(\lambda)^2+c(\lambda)\sigma''(\lambda)\sigma(\lambda)\big).
 \end{gather*}
 Finally, we obtain
 \begin{gather}
\phantom{0} =\frac{1}{\sigma(z)}\Bigg({-}\frac{\dot\sigma}{\sigma}(z)\big(\eta(z)-\eta(\lambda)\big) +\dot\eta(z)-\dot\eta(\lambda)
+\frac{\eta(z)-\eta(\lambda)}{(z-\lambda)}\bigg(\frac{c\sigma'}{\sigma}(z)-c'(z)\bigg)\nonumber
\\ \hphantom{??? =\frac{1}{\sigma(z)}\Bigg(}
{}-\eta'(\lambda)\bigg(\frac{\sigma'c}{\sigma}(\lambda)-c'(\lambda)\bigg)
+ \frac{2c\eta(z)-2c\eta(\lambda)-\big((c\eta)'(z)+(c\eta)'(\lambda)\big)(z-\lambda)}{(z-\lambda)^2} \Bigg)\nonumber
\\ \hphantom{0 =}
{}+\frac{\mu}{\sigma(z)}\Bigg({-}\dot\tau(\lambda) +c(\lambda)\frac{\tau''}{2}
-\tau'(\lambda)\bigg(\frac{\sigma'c}{\sigma}(\lambda)-c'(\lambda)\bigg)\nonumber
\\ \hphantom{??? =+\frac{\mu}{\sigma(z)}\Bigg(}
{}+\tau(\lambda)\bigg(\frac{\dot\sigma}{\sigma}(z)+\frac{c''}{2}
+ \frac{c(\lambda)\sigma''(\lambda)}{2\sigma(\lambda)}
-\frac{\big(\frac{\sigma'c}{\sigma}(z)-\frac{\sigma'c}{\sigma}(\lambda)\big)}{(z-\lambda)}\bigg)\Bigg)
\nonumber
\\ \hphantom{0 =}
{}+\frac{\mu}{\sigma(z)}\Bigg({-}\sigma'(\lambda)\frac{\dot\sigma}{\sigma}(z)+\dot\sigma'(\lambda)-
\sigma(\lambda)\frac{\big(\frac{\dot\sigma}{\sigma}(z)-
\big(\frac{\dot\sigma}{\sigma}(\lambda)\big)}{(z-\lambda)}\nonumber
\\ \hphantom{??? =+\frac{\mu}{\sigma(z)}\Bigg(}
{}-\frac{c''}{2}\sigma'(\lambda)+\frac{c(\lambda)}{2}\frac{\sigma'(\lambda)}{\sigma(\lambda)}
\sigma''(\lambda)-c'(\lambda)\sigma''(\lambda)-\frac{\sigma'''c(\lambda)}{2}\nonumber
\\ \hphantom{??? =+\frac{\mu}{\sigma(z)}\Bigg(}
{}+\frac{\big(\frac{c\sigma'}{\sigma}(z)-\frac{c\sigma'}{\sigma}(\lambda) -(z-\lambda)\big(\frac{c\sigma'}{\sigma}\big)'(\lambda)\big)}{(z-\lambda)^2}\sigma(\lambda) +\frac{\big(\frac{c\sigma'}{\sigma}(z)-\frac{c\sigma'}{\sigma}(\lambda)\big)}{z-\lambda}
\sigma'(\lambda)\Bigg)\nonumber
\\ \hphantom{0 =}
{}+ \frac{\mu^2\sigma(\lambda)}{\sigma(z)}\bigg(\frac{\dot\sigma}{\sigma}(z)-
 \frac{\dot\sigma}{\sigma}(\lambda)-
 \frac{\frac{c\sigma'}{\sigma}(z)-\frac{c\sigma'}{\sigma}(\lambda)
-(\frac{c\sigma'}{\sigma})'(\lambda)(z-\lambda)} {(z-\lambda)}\bigg)\nonumber
 \\ \hphantom{0}
{} =\frac{1}{\sigma(z)}II +\bigg({-}\frac{\mu\sigma(\lambda)}{\sigma(z)(z-\lambda)}
 +\frac{\mu}{\sigma(z)}\big(\tau(\lambda)-\sigma'(\lambda)\big)
 + \frac{\mu^2\sigma(\lambda)}{\sigma(z)}\bigg)III\nonumber
 \\ \hphantom{0 =}
{} +\frac{\mu}{\sigma(z)}\bigg((\tau(\lambda)-\sigma'(\lambda))\frac{\dot\sigma}{\sigma}(\lambda) -\dot\tau(\lambda)+\dot\sigma'(\lambda)\bigg)\nonumber
\\ \hphantom{0 =}
 {} +\frac{\mu}{\sigma(z)}\Bigg( \frac12\big((\tau-\sigma')c\big)''(\lambda)
-(\tau-\sigma')'(\lambda)\frac{c\sigma'}{\sigma}(\lambda)\nonumber
\\ \hphantom{???=+\frac{\mu}{\sigma(z)}\Bigg(}
{}+(\tau-\sigma')(\lambda)\bigg({-}\frac{c'\sigma'}{\sigma}(\lambda)+\frac{c\sigma'\sigma'}{\sigma^2}(\lambda) -\frac{c\sigma''}{2\sigma}(\lambda)\bigg) \Bigg)\nonumber
 \\ \hphantom{0}
 {}=\frac{1}{\sigma(z)}II
 +\bigg({-}\frac{\mu\sigma(\lambda)}{\sigma(z)(z-\lambda)}
 +\frac{\mu}{\sigma(z)}\big(\tau(\lambda)-\sigma'(\lambda)\big)
 + \frac{\mu^2\sigma(\lambda)}{\sigma(z)}\bigg)III\nonumber
 \\ \hphantom{0 =}\label{powq2}
{}+\frac{\mu\sigma(\lambda)}{\sigma(z)}
 \bigg({-}\partial_t\frac{\tau}{\sigma}(\lambda)+\partial_t\frac{\sigma'}{\sigma}(\lambda)
 +\frac12\big(\frac{\tau c}{\sigma}\big)''(\lambda)
 -\frac12\big(\frac{\sigma' c}{\sigma}\big)''(\lambda) \bigg).
 \end{gather}
 We~have
 \begin{gather}\label{powq3}
 \lim_{z\to\lambda}I=\partial_t\frac{\tau}{\sigma}(\lambda)-\frac12
\big(\frac{\tau c}{\sigma}\big)''(\lambda),
\\
\label{powq4}
\lim_{z\to\lambda}\frac{III}{(z-\lambda)}=
 \partial_t\frac{\sigma'}{\sigma}(\lambda)
 -\frac12\big(\frac{\sigma' c}{\sigma}\big)''(\lambda).
 \end{gather}
 Therefore, if $I$ and $III$ vanish, then so do
(\ref{powq3}) and~(\ref{powq4}), and hence also~(\ref{powq2})
 \end{proof}

Propositions~\ref{prop1} and~\ref{prop2} prove Theorem~\ref{main0}.

Next we would like to prove Theorems~\ref{main1} and~\ref{main}.
The proof will be divided into three subsections. In~the first two we
consider Cases A and in the third Case B.

 From now on we assume that $\sigma$, $\tau$, $\eta$ correspond to a Heun class equation, that is $\deg\sigma\leq3$,
$\deg\tau\leq2$ and $\deg\sigma\eta\leq4$.

\subsection{Case A, Part I}\label{apa}

Assume that $\sigma$ has a zero at $z=s$ so that
$\sigma(z)=(z-s)\rho(z)$. Clearly, $\deg\rho\leq2$.

We~make the ansatz\vspace{-.5ex}
 \begin{gather} \label{ansa}
 c(z):=m(\lambda-s)\rho(z),
\vspace{-.5ex}
 \end{gather}
where $m$ is a function only of $t$.

The equations \eqref{fir} and \eqref{second} can be rewritten as
 \begin{gather}\label{fir4}
\dot\lambda=m\big(2 \sigma(\lambda)\mu-(\lambda-s)\rho'(\lambda)+\tau(\lambda)\big),
\\
\dot\mu=-m\big(\eta'(\lambda)+\mu(\tau'(\lambda)-\rho'(\lambda)-(\lambda-s)
 \rho''(\lambda))+\mu^2\sigma'(\lambda)\big). \label{second4}
 \end{gather}
It is easy to check that \eqref{fir4} and \eqref{second4} are the Hamilton equations
for the Hamiltonian
 \begin{gather*}
 H=m\big(\eta(\lambda)+\big(\tau(\lambda)-(\lambda-s)\rho'(\lambda)\big)\mu + \sigma(\lambda)\mu^2\big).
\end{gather*}

Another consequence of \eqref{ansa} is
 \begin{gather*}
\frac{c\tau}{\sigma}(z)=\frac{m(\lambda-s)\tau(z)}{(z-s)}.
\end{gather*}
Therefore, using \eqref{muu2-} we obtain
 \begin{gather*} \frac{\frac{c\tau}{\sigma}(z)-\frac{c\tau}{\sigma}(\lambda)-(z-\lambda)\big(\frac{c\tau}{\sigma}\big)'(z)}
{(z-\lambda)^2}=-\frac{m\tau(s)}{(z-s)^2}.
\end{gather*}
Hence,
 \begin{gather*}
 I=\partial_t \frac{\tau}{\sigma}(z)-\frac{m\tau(s)}{(z-s)^2}.\vspace{-.5ex}
\end{gather*}

Using
 \begin{gather*}
\frac{c\sigma'}{\sigma}(z)-c'(z)=\frac{m(\lambda-s)\rho(z)}{z-s},\vspace{-.5ex}
\end{gather*}
we obtain
\begin{gather*}
\frac{\eta(z)-\eta(\lambda)}{(z-\lambda)}\bigg(\frac{c\sigma'}{\sigma}(z)-c'(z)\bigg)
-\eta'(\lambda)\bigg(\frac{\sigma'c}{\sigma}(\lambda)-c'(\lambda)\bigg)
\\ \qquad
{}=\frac{m\big(\eta(z)-\eta(\lambda)\big)(\lambda-s)\rho(z)} {(z-\lambda)(z-s)}-m\eta'(\lambda)\rho(\lambda).
 \end{gather*}

Therefore,
\begin{gather*} II=-\frac{\dot\sigma}{\sigma}(z)\big(\eta(z)-\eta(\lambda)\big)+\dot\eta(z)-\dot\eta(\lambda)
+\frac{m\big(\eta(z)-\eta(\lambda)\big)(\lambda-s)\rho(z)}{(z-\lambda)(z-s)}-m\eta'(\lambda)\rho(\lambda)
\\ \hphantom{II=}
{}+\frac{m(\lambda-s)}{(z-\lambda)^2}\big(
2\rho\eta(z)-2\rho\eta(\lambda)-\big((\rho\eta)'(z)+(\rho\eta)'(\lambda)\big)(z-\lambda)\big) .
\end{gather*}

Finally, we have
\begin{gather*}
 \frac{c\sigma'(z)}{\sigma(z)}=\frac{m(\lambda-s)(\rho(z)+(z-s)\rho'(z)) }{(z-s)}.\vspace{-.5ex}
\end{gather*}
Using~\eqref{muu2-} with $\xi(z)=-\rho(z)-(z-s)\rho'(z)$ and the interchanged role of $\lambda$
and $z$ we obtain
\begin{gather*}
 \frac{\frac{c\sigma'}{\sigma}(z)-\frac{c\sigma'}{\sigma}(\lambda)
-(\frac{c\sigma'}{\sigma})'(\lambda)(z-\lambda)} {(z-\lambda)}
 =m\rho(s)\frac{(z-\lambda)}{(z-s)(\lambda-s)}.
\end{gather*}
Therefore,
\begin{gather*}
 III=\frac{\dot\sigma}{\sigma}(z)- \frac{\dot\sigma}{\sigma}(\lambda)-
 m\rho(s)\frac{(z-\lambda)}{(z-s)(\lambda-s)}.
\end{gather*}

\noindent{\bf Subcase A1.}
We~assume that the root $s$ is single and the time variable is
chosen as $t=s$. We~can write
$\sigma(z)=(z-t)\rho(z)$, $\rho(t)\neq0$, $\deg\rho\leq2$.

We~assume $\deg\eta_0\leq1$, $\deg\phi\leq1$, $\kappa, \alpha\in\cc$,\vspace{-.5ex}
\begin{gather*}
 \tau(z)=(1-\kappa)\rho(z)+\phi(z)(z-t),
 \qquad
\eta(z)=\frac{\alpha\rho(t)}{z-t}+\eta_0(z),\qquad
 \dot\kappa=\dot\rho=\dot\phi=\dot\eta_0=\dot\alpha=0.\vspace{-.5ex}
\end{gather*}

We~have
\begin{gather*}
\partial_t\frac{ \tau}{\sigma}(z)=\frac{1-\kappa}{(z-t)^2},\qquad
1-\kappa=\frac{\tau(t)}{\rho(t)}.\vspace{-.5ex}
\end{gather*}
Therefore,
\begin{gather*}
 I=\frac{1-\kappa}{(z-t)^2}\big(1-m\rho(t)\big).\vspace{-.5ex}
\end{gather*}

Using $\deg\eta_0\leq1$ and $\dot\eta_0=0$ we obtain
\begin{gather*} -\frac{\dot\sigma}{\sigma}(z)\big(\eta_0(z)-\eta_0(\lambda)\big)+\dot\eta_0(z)-\dot\eta_0(\lambda)
=\frac1{(z-t)}\eta_0'(z-\lambda),
\\
\frac{m\big(\eta_0(z)-\eta_0(\lambda)\big)(\lambda-t)\rho(z)}{(z-\lambda)(z-t)}-m\eta_0'(\lambda)\rho(\lambda)
= -\frac{m\rho(t) \eta_0' (z-\lambda)}{(z-t)}+m\eta_0'(\lambda-t)\frac{\rho''}{2}(z-\lambda).
\end{gather*}
Using $\deg\eta_0\rho\leq3$ and~\eqref{muus} we get
\begin{gather*}
\frac{m(\lambda-t)}{(z-\lambda)^2}
\big(2\rho\eta_0(z)-2\rho\eta_0(\lambda)-\big((\rho\eta_0)'(z)+(\rho\eta_0)'(\lambda)\big)(z-\lambda)\big)
\\ \qquad
{}=-\frac{m(\lambda-t)(\rho\eta_0)'''(z-\lambda)}{6}
=-\frac{m(\lambda-t)\rho''\eta_0'(z-\lambda)}{2}.
 \end{gather*}

Set $\psi(z):=\frac{\rho(t)}{z-t}$.
 We~have
\begin{gather*}
 -\frac{\dot\sigma}{\sigma}(z)\big(\psi(z)-\psi(\lambda)\big)+\dot\psi(z)-\dot\psi(\lambda)
=\bigg(\frac{\rho(t)}{z-t}+\rho'(t)\bigg)\bigg(\frac1{z-t}-\frac1{\lambda-t}\bigg)
\\ \hphantom{-\frac{\dot\sigma}{\sigma}(z)\big(\psi(z)-\psi(\lambda)\big)+\dot\psi(z)-\dot\psi(\lambda)
=}
 {}+\rho(t)\bigg(\frac1{(z-t)^2}-\frac1{(\lambda-t)^2}\bigg),
 \\
 \frac{m\big(\psi(z)-\psi(\lambda)\big)(\lambda-t)\rho(z)}{(z-\lambda)(z-t)}-m\psi'(\lambda)\rho(\lambda)
 = -\frac{ m\rho(t)}{(z-t)^2} \bigg(\rho(t)+\rho'(t)(z-t)+\frac{\rho''}{2}(z-t)^2\bigg)
\\ \hphantom{ \frac{m\big(\psi(z)-\psi(\lambda)\big)(\lambda-t)\rho(z)}{(z-\lambda)(z-t)}-m\psi'(\lambda)\rho(\lambda)
 =}
{}+ \frac{ m\rho(t)}{(\lambda-t)^2} \bigg(\rho(t)\!+\rho'(t)(\lambda\!-t)\!+\frac{\rho''}{2}(\lambda\!-t)^2\bigg).
 \end{gather*}
Using \eqref{simple.2} we get
\begin{gather*}
\frac{m(\lambda-t)}{(z-\lambda)^2}
\big( 2\rho\psi(z)-2\rho\psi(\lambda)-\big((\rho\psi)'(z)+(\rho\psi)'(\lambda)\big)(z-\lambda)\big)
 =\frac{m\rho(t)^2(z-\lambda)}{(z-t)^2(\lambda-t)}
 \\ \hphantom{\frac{m(\lambda-t)}{(z-\lambda)^2}
\big( 2\rho\psi(z)-2\rho\psi(\lambda)-\big((\rho\psi)'(z)+(\rho\psi)'(\lambda)\big)(z-\lambda)\big)}
{}=-\frac{m\rho(t)^2}{(z-t)}\bigg(\frac1{z-t}-\frac1{\lambda-t}\bigg).
 \end{gather*}
Combining the above identities we obtain
\begin{gather*}
II =\bigg(\!\bigg({-}(\lambda-t)\eta_0'+\frac{\rho(t)}{z-t}+\rho'(t)\bigg) \bigg(\frac1{z-t}-\frac1{\lambda-t}\bigg)
+\rho(t)\bigg(\frac1{(z-t)^2}-\frac1{\lambda-t)^2}\bigg)\!\bigg)
\\ \hphantom{II =}
{}\times(1-m\rho(t)\big).
 \end{gather*}
We~have\vspace{-.5ex}
\begin{gather*}
\frac{\dot\sigma}{\sigma}(z)-\frac{\dot\sigma}{\sigma}(\lambda)
=-\frac{1}{z-t}+\frac{1}{\lambda-t}=\frac{z-\lambda}{(\lambda-t)(z-t)}.\vspace{-.5ex}
\end{gather*}
Therefore,\vspace{-.5ex}
\begin{gather*}
 III= \frac{z-\lambda}{(\lambda-t)(z-t)}\big(1-m\rho(t)\big).\vspace{-.5ex}
\end{gather*}
Thus $m=\frac{1}{\rho(t)}$ implies $I=II=III=0$.

\subsection{Case A, Part II}\label{apa1}

Assume that the root $s$ is at least double. Then $\rho(s)=0$ so that the normalization
$m=\frac{1}{\rho(s)}$ does not work. We~have to change the time variable.

Thus we assume $\sigma(z)=(z-s)^2\rho_1(z)$, where $\deg\rho_1\leq1$.
We~also assume $\dot{s}=\dot\rho_1=0$. Then we have
\begin{gather*}
I=\frac{\dot\tau}{\sigma}(z)-\frac{m\tau(s)}{(z-s)^2},
\\
II =\dot\eta(z)-\dot\eta(\lambda)+ \frac{m(\lambda-s)}{(z-\lambda)}\bigg(
 \rho_1 \eta (z)-\rho_1 \eta (\lambda)-(\rho_1 \eta)'(\lambda)(z-\lambda)-\frac{(\rho \eta)'''}{6}(z-\lambda)^2\bigg).
 \end{gather*}
We~consider separately two subcases: in the first the time variable is
contained in $p_0$ and in the second in $q_0$.
\medskip

\noindent {\bf Subcase Ap.}
We~assume that $\dot\tau_0=\dot\eta=0$, $\deg\tau_0\leq2$,
$\sigma\eta(s)=(\sigma\eta)'(s)=0$,
\begin{gather*}
\tau(z)=\tau_0(z)+t\rho_1(z),
\qquad
\tau_0(s)\neq0\quad \text{or}\quad \rho_1(s)\neq0.
\end{gather*}
We~have\vspace{-.5ex}
\begin{gather*}
\frac{\dot\tau}{\sigma}(z)=\frac{1}{(z-s)^2},\qquad \tau(s)=\tau_0(s)+t\rho_1(s)\not\equiv0.\vspace{-.5ex}
\end{gather*}
Hence,\vspace{-.5ex}
\begin{gather*}
 I=\frac{1}{(z-s)^2}\big(1-m(\tau_0(s)+t\rho_1(s))\big).\vspace{-.5ex}
\end{gather*}
Thus $m=\big(\tau_0(s)+t\rho_1(s)\big)^{-1}$ implies $I=0$.

$\sigma\eta(s)=(\sigma\eta)'(s)=0$ implies that $\rho_1\eta$ is a
polynomial of degree $\leq2$. Therefore,
$(\rho \eta)'''=\big(\rho_1 \eta)(z-s)\big)'''=3(\rho_1 \eta)''$ and
\begin{gather*}
II= \frac{m(\lambda-s)}{(z-\lambda)}\bigg(\rho_1 \eta (z)-\rho_1 \eta (\lambda)-(\rho_1 \eta)'(\lambda)(z-\lambda)-\frac{(\rho_1 \eta)''}{2} (z-\lambda)^2\bigg)=0.
\end{gather*}

\medskip

\noindent{\bf Subcase Aq.} We~assume that $\tau(s)=0$, $\sigma\eta_0(s)=0$, $\deg\sigma\eta_0\leq4$,
\begin{gather*}
\eta(z)=\frac{t}{z-s}+\eta_0(z),\qquad \dot\sigma=\dot\tau=\dot\eta_0=0,
\\
\rho_1(s)\neq0\qquad\text{or}\qquad(\sigma\eta_0)'\neq0.
\end{gather*}

Clearly, $I=0$.

Now
\begin{gather*}
\sigma\eta_0(z)=(z-s)(\sigma\eta_0)'(s)+(z-s)^2\psi(z),
\end{gather*}
where $\deg\psi\leq2$. Therefore,
\begin{gather*}
\eta_0\rho_1(z)=\frac{(\sigma\eta_0)'(s)}{z-s}+\psi(z),\\
\rho_1\eta(z)=\frac{(\sigma\eta_0)'(s)+\rho_1(s)t}{z-s}+\psi(z)+\rho_1't.
\end{gather*}
Now
$(\eta\rho)'''=\big((z-s)\rho_1\eta\big)'''=\big((z-s)\psi\big)'''=3\psi''$. Therefore,
\begin{gather}\nonumber
\rho_1 \eta (z)-\rho_1 \eta (\lambda)-(\rho_1 \eta)'(\lambda)(z-\lambda)-\frac{(\rho \eta)'''}{6}(z-\lambda)^2
\\ \qquad
{} =\psi(z)-\psi(\lambda)-(z-\lambda)\psi'(\lambda)-(z-\lambda)^2\frac{\psi''}{2}\label{pisa}
\\ \qquad\phantom{=}
{}+\big((\sigma\eta_0)'(s)+\rho_1(s)t\big) \bigg(\frac{1}{z-s}-\frac{1}{\lambda-s}+\frac{(z-\lambda)}{(\lambda-s)^2}\bigg).\label{pisa1}
\end{gather}
\eqref{pisa} vanishes, because $\deg\psi\leq2$. Using \eqref{pisa1}, \eqref{simple} and
\begin{gather*}
 \dot\eta(z)-\dot\eta(\lambda)=\frac{1}{z-s}-\frac{1}{\lambda-s}
\end{gather*}
we obtain
\begin{gather*}
 II=\bigg(\frac{1}{z-s}- \frac{1}{\lambda-s}\bigg) \big(1-m \big((\sigma\eta_0)'(s)+\rho_1(s)t\big) \big),
\end{gather*}
where $(\sigma\eta_0)'(s)+\rho_1(s)t\not\equiv0$.
 Hence if
 $m=\big((\sigma\eta_0)'(s)+\rho_1(s)t\big)^{-1}$, then $II=0$.

\subsection{Case B}\label{apb}

We~assume that $\deg\sigma\leq2$, $\deg\sigma\eta\leq3$ and $\dot\sigma=0$. We~set
 \begin{gather*}\label{caseb}
 c(z)=m\sigma(z),
 \end{gather*}
 where $m$ is a function just of $t$.
The equations \eqref{fir} and \eqref{second} can be rewritten as
\begin{gather}\label{fir5}
\dot\lambda=m\big(2 \sigma(\lambda)\mu-\sigma'(\lambda)+\tau(\lambda)\big),
\\
\dot\mu=-m\big(\eta'(\lambda)+\mu(\tau'(\lambda)-
 \sigma''(\lambda))+\mu^2\sigma'(\lambda)\big).\label{second5}
 \end{gather}
We~easily check that \eqref{fir5} and \eqref{second5} are the Hamilton equations
for the Hamiltonian
 \begin{gather*}
 H(t,\lambda,\mu):=m\big(\eta(\lambda)+\big(\tau(\lambda)-\sigma'(\lambda)\big) \mu
 +\sigma(\lambda)\mu^2\big).
 \end{gather*}

Using $\deg\tau\leq2$, we get
\begin{gather*}
\frac{\tau(z)-\tau(\lambda)-(z-\lambda)\tau'(z)}{(z-\lambda)^2}=-\frac{\tau''}{2}.
\end{gather*}
Therefore,
\begin{gather*}
I= \frac{\dot \tau(z)}{\sigma(z)}-m\frac{\tau''}{2}.
\end{gather*}

Using $\deg\sigma\eta\leq3$ we obtain
 \begin{gather*}
\frac{2\sigma\eta(z)-2\sigma\eta(\lambda)-\big((\sigma\eta)'(z)+(\sigma\eta)'(\lambda)\big) (z-\lambda)}{(z-\lambda)^2}=-\frac{(\sigma\eta)'''}{6}(z-\lambda).
 \end{gather*}
 Therefore,
 \begin{gather*}
 II= \dot\eta(z)-\dot\eta(\lambda) -m\frac{(\sigma\eta)'''}{6}(z-\lambda).
 \end{gather*}

$III$ is clearly $0$, because $\dot\sigma=0$ and $\frac{c\sigma'}{\sigma}$ is a polynomial of degree $\leq1$.

This ends the proof of Theorem~\ref{main}B.

Again, we consider two subcases, with $t$ contained in $p_0$ and in~$q_0$.

\medskip

\noindent{\bf Subcase Bp.}
$\deg\sigma\leq2$, $\deg\tau_0\leq2$, $\deg\sigma\eta\leq2$,
\begin{gather*}
\tau(z)=t\sigma(z)+\tau_0(z),\qquad \dot\sigma=\dot\tau_0= \dot\eta=0,
\\
\deg\sigma=2\qquad\text{or}\qquad\deg\tau_0=2.
\end{gather*}

$II=0$ is automatic. Moreover,
$\tau''=t\sigma''+\tau_0''\not\equiv0$ and
\begin{gather*}
I=1-m\bigg(t\frac{\sigma''}{2}+\frac{\tau''}{2}\bigg).
\end{gather*}
Hence $m=\big(t\frac{\sigma''}2+\frac{\tau_0''}2\big)^{-1}$ implies $I=0$.

\medskip

\noindent{\bf Subcase Bq.}
 $\deg\sigma\leq2$, $\deg\tau\leq1$, $ \deg\sigma\eta_0\leq3$,
\begin{gather*}
\eta(z)=tz+\eta_0(z),\qquad \dot\sigma=\dot\tau=\dot\eta_0=0,
\\
\deg\sigma=2\qquad\text{or} \qquad \deg\sigma\eta_0=3.
\end{gather*}

$I=0$ is automatic. Moreover,
$(\sigma\eta)'''=t3\sigma''+(\sigma\eta)'''\not\equiv0$ and
\begin{gather*}
II=(z-\lambda)\bigg(1-m\bigg(t\frac{\sigma''}{2}+\frac{(\sigma\eta_0)'''}{6}\bigg)\!\bigg).
\end{gather*}
Therefore
$m=\big(t\frac{\sigma''}{2}+\frac{(\sigma\eta_0)'''}{6}\big)^{-1}$ implies $II=0$.

\section{Hamilton equations}
\subsection{From Hamilton equations to second order equations}

 We~devote this appendix
to a few remarks about Hamilton equations.

Suppose that $H(t,\lambda,\mu)$ is a~time-dependent Hamiltonian. The equations
\begin{gather}\label{hamil0}
\frac{\d\lambda}{\d t}=\frac{\partial H}{\partial \mu}(t,\lambda,\mu),
\\
\frac{\d\mu}{\d t}=-\frac{\partial H}{\partial \lambda}(t,\lambda,\mu),
\label{hamil}
\end{gather}
are called the Hamilton equations generated by $H$.

 All Painlev\'e Hamiltonians have the form
 \begin{gather*}
 H(t,\lambda,\mu)=f(t,\lambda)\frac{\mu^2}{2} +\mu g(t,\lambda)+h(t,\lambda).
 \end{gather*}
 For such Hamiltonians it is easy to eliminate
 $\mu$ from the Hamilton equations. One obtains a~second order differential equation for $\lambda$ of the form
\begin{gather*}
\frac{\d^2\lambda}{\d t^2}=A(t,\lambda)\bigg(\frac{\d\lambda}{\d t}\bigg)^2+B(t,\lambda)\frac{\d \lambda}{\d t}+C(t,\lambda),
\\
A:= \frac{1}{2f} \frac{\partial f}{\partial\lambda},\qquad
B: = \frac{1}{f} \frac{\partial f}{\partial t},\qquad
C: = - \frac{g^2}{2f} \frac{\partial f}{\partial\lambda} - \frac{g}{f} \frac{\partial f}{\partial t} + g \frac{\partial g}{\partial\lambda} + \frac{\partial g}{\partial t} - f \frac{\partial h}{\partial\lambda}.
\end{gather*}

\subsection{Invariance of Hamilton equations}

The Hamilton equations are invariant with respect to various transformations.
\begin{itemize}\itemsep=0pt
\item The equations generated by $\epsilon
H(\epsilon t,\lambda,\mu)$ are equivalent to~(\ref{hamil0}) and~(\ref{hamil}).
\item Let $(\lambda,\mu)\mapsto\big(\tilde\lambda,\tilde\mu\big)$ be a (time-independent) canonical transformation, that means
 \begin{gather*}
 \frac{\partial\tilde\lambda}{\partial\lambda} \frac{\partial\tilde\mu}{\partial\mu}
 - \frac{\partial\tilde\lambda}{\partial\mu} \frac{\partial\tilde\mu}{\partial\lambda}=1.
 \end{gather*}
 Then the Hamilton equations in the new variables
 \begin{gather*}
\frac{\d\tilde\lambda}{\d t}=\frac{\partial H}{\partial \tilde\mu},
\qquad
\frac{\d\tilde\mu}{\d t}=-\frac{\partial H}{\partial \tilde\lambda},
\end{gather*}
 are equivalent to~(\ref{hamil}).
\item
 The Hamilton equations are invariant with respect to the following time-dependent transformation:
 \begin{gather*}
 \tilde\lambda=t^{-1}\lambda,\qquad \tilde\mu=t\mu,\qquad
 \tilde H=H-t^{-1}\tilde\lambda\tilde\mu.
 \end{gather*}
\end{itemize}

\subsection{Hamiltonian solvable in quadratures}\label{hammo1}
 Let us now consider a Hamiltonian of the form
 \begin{gather}
 H(t,\lambda,\mu)=m(t)\bigg(f(\lambda)\frac{\mu^2}{2} +\mu g(\lambda)+h(\lambda)\bigg).\label{hammo}
 \end{gather}
 We~will show that it is solvable in quadratures.

 First we change the time from $t$ to $s$, by solving
\begin{gather*}
\frac{\d s}{\d t}=m(t).
 \end{gather*}
 Using the time $s$ we can replace~(\ref{hammo}) by the time-independent Hamiltonian
 \begin{gather*}
 H(\lambda,\mu)=f(\lambda)\frac{\mu^2}{2} +\mu g(\lambda)+h(\lambda).
 \end{gather*}
Now
\begin{gather*}
E:=f(\lambda)\frac{\mu^2}{2} +\mu g(\lambda)+h(\lambda)
\end{gather*}
 is a constant of motion, hence we can express $\mu$ in terms of $\lambda$:
 \begin{gather*}
 \mu=\frac{-g(\lambda)\pm\sqrt{g(\lambda)^2-2f(\lambda)(h(\lambda)-E)}}{f(\lambda)} .
 \end{gather*}
 We~insert this into the first Hamilton equation
 \begin{gather*}
\frac{\d\lambda}{\d s}=\mu f(\lambda)+g(\lambda)
 \end{gather*}
 obtaining
 \begin{gather*}
\frac{\d\lambda}{\d s}=\pm\sqrt{g(\lambda)^2-2f(\lambda)(h(\lambda)-E)}.
 \end{gather*}
This is clearly solvable in quadratures.

\subsection*{Acknowledgements}
J.D.\ and A.I.\ would like to express their gratitude to Galina Filipuk for very useful discussions and remarks. A.I.\ acknowledges the support
by the Armenian Science Committee (SC Grant No.~20RF-171), and the Armenian National Science and Education Fund (ANSEF Grant No.~PS-5701).
The work of J.D.\ and A.L.\ was supported by National Science Center (Poland) under the grant UMO-2019/35/B/ST1/01651.

\addcontentsline{toc}{section}{References}
\LastPageEnding

\end{document}